\documentclass[12pt,reqno]{amsart}
\usepackage{latexsym}
\usepackage{amssymb}
\usepackage{mathrsfs}
\usepackage{amsmath}
\usepackage{comment}
\usepackage{fancybox,color}
\usepackage{enumerate}
\usepackage[latin1]{inputenc}
\usepackage{soul} 
\usepackage[colorlinks=true, linkcolor=blue, citecolor=blue]{hyperref}
\usepackage{graphicx}
\usepackage{autonum}
\allowdisplaybreaks[4]
{\catcode`p =12 \catcode`t =12 \gdef\eeaa#1pt{#1}}      
\def\accentadjtext#1{\setbox0\hbox{$#1$}\kern   
	\expandafter\eeaa\the\fontdimen1\textfont1 \ht0 }
\def\accentadjscript#1{\setbox0\hbox{$#1$}\kern 
	\expandafter\eeaa\the\fontdimen1\scriptfont1 \ht0 }
\def\accentadjscriptscript#1{\setbox0\hbox{$#1$}\kern   
	\expandafter\eeaa\the\fontdimen1\scriptscriptfont1 \ht0 }
\def\accentadjtextback#1{\setbox0\hbox{$#1$}\kern       
	-\expandafter\eeaa\the\fontdimen1\textfont1 \ht0 }
\def\accentadjscriptback#1{\setbox0\hbox{$#1$}\kern     
	-\expandafter\eeaa\the\fontdimen1\scriptfont1 \ht0 }
\def\accentadjscriptscriptback#1{\setbox0\hbox{$#1$}\kern 
	-\expandafter\eeaa\the\fontdimen1\scriptscriptfont1 \ht0 }

\newcommand{\ch}[1]{{\mbox{\raise 1pt\hbox{\large$\chi$}}}_{\lower 1pt\hbox{$\scriptstyle #1$}}}

\usepackage[left=2.1cm,top=2.3cm,right=2.1cm]{geometry}

\def\1{\raisebox{2pt}{\rm{$\chi$}}}

\newtheorem{theorem}{Theorem}[section]

\newtheorem{corollary}[theorem]{Corollary}
\newtheorem{lemma}[theorem]{Lemma}

\theoremstyle{definition}
\newtheorem{definition}[theorem]{Definition}
\newtheorem{remark}[theorem]{Remark}

\newcommand{\R}{{\mathbb R}}

\newcommand{\N}{{\mathbb N}}
\newcommand{\Z}{{\mathbb Z}}

\newcommand\diam{\operatorname{diam}}

\def\1{\raisebox{2pt}{\rm{$\chi$}}}

\def\vint_#1{\mathchoice%
	{\mathop{\kern 0.2em\vrule width 0.6em height 0.69678ex depth -0.58065ex
			\kern -0.8em \intop}\nolimits_{\kern -0.4em#1}}%
	{\mathop{\kern 0.1em\vrule width 0.5em height 0.69678ex depth -0.60387ex
			\kern -0.6em \intop}\nolimits_{#1}}%
	{\mathop{\kern 0.1em\vrule width 0.5em height 0.69678ex
			depth -0.60387ex
			\kern -0.6em \intop}\nolimits_{#1}}%
	{\mathop{\kern 0.1em\vrule width 0.5em height 0.69678ex depth -0.60387ex
			\kern -0.6em \intop}\nolimits_{#1}}}
\def\vintslides_#1{\mathchoice%
	{\mathop{\kern 0.1em\vrule width 0.5em height 0.697ex depth -0.581ex
			\kern -0.6em \intop}\nolimits_{\kern -0.4em#1}}%
	{\mathop{\kern 0.1em\vrule width 0.3em height 0.697ex depth -0.604ex
			\kern -0.4em \intop}\nolimits_{#1}}%
	{\mathop{\kern 0.1em\vrule width 0.3em height 0.697ex depth -0.604ex
			\kern -0.4em \intop}\nolimits_{#1}}%
	{\mathop{\kern 0.1em\vrule width 0.3em height 0.697ex depth -0.604ex
			\kern -0.4em \intop}\nolimits_{#1}}}

\newcommand{\intav}{\vint}

\newcommand{\dist}{\operatorname{dist}}

\title[Characterizing Capacity]{Sharp Capacity Estimates and Volume Growth for Fractional Sobolev Spaces}

\author[A. Agarwal]{Angha Agarwal}   %
\address[A.I.]{Department of Mathematics and Statistics, P.O. Box 35, FI-40014 University of Jyvaskyla, Finland}
\email{angha.a.agarwal@jyu.fi}



\keywords{fractional Sobolev space, capacity, volume growth}

\subjclass[2020]{46E35}

\begin{document}

\begin{abstract}
   We obtain sharp two-sided estimates for the relative capacity of fat and thin
annuli in the fractional Sobolev spaces $W^s_{p,q}$, where $s\in(0,1)$, $p\in(0,\infty)$, $q\in(0,\infty]$,
 in terms of an explicit volume growth quantity. The underlying
space is a general metric measure space equipped with a doubling and reverse
doubling measure. In particular, whenever the capacity is finite, the
estimates are independent of the parameter $q$, and in the Ahlfors regular case with $p=q$ they recover Costea's logarithmic law at $sp=Q$. For $p<q$, we also introduce a sharp condition on the measure that is necessary for the existence of admissible functions in the space $W^s_{p,q}$. For thin annuli $(B_r,B_{r+\varepsilon r})$  under an
$\eta$-annular decay property, we uncover a purely nonlocal phenomenon: for $sp <\eta$, the capacity remains bounded as $\varepsilon\to 0$, because the limiting competitor $\1_{B_r}$
already has finite energy, while if $sp\geq\eta$ it  blows up as $ \varepsilon\to0$.
	\end{abstract}
    \maketitle
\section{Introduction}
In this paper, we study capacities associated with the fractional Sobolev spaces $W^s_{p,q}$ with mixed $(L^p,L^q)$ integrability on metric measure spaces. For $s\in(0,1)$, $p\in(0,\infty)$ and $q\in(0,\infty]$ we consider the (quasi) seminorm
\[
	[u]_{W^s_{p,q}(X)} = \left(\int_{X}\left(\int_{X}\frac{|u(x)-u(y)|^q}{d(x,y)^{s q}\mu(B(x,d(x,y)))}\,d\mu(y)\right)^{\frac{p}{q}}\,d\mu(x)\right)^\frac{1}{p},
	\]
with the usual modification when $q=\infty$. When $p=q$ this is the Gagliardo-type energy, which for $p\ge 1$ coincides with
the classical Besov seminorm $B^s_{p,p}$ (see \cite{RuSi,Triebel95}). For $p\neq q$, the
the resulting seminorm is of
Triebel--Lizorkin type. In a forthcoming
work \cite{AljA}, we show that spaces 
$W^s_{p,q}$ can be identified with appropriate Triebel--Lizorkin spaces
(see also \cite{KoYaZh}). Such mixed-norm spaces have attracted considerable
attention in recent years because of their role in nonlocal partial differential
equations, harmonic analysis, geometric function theory and nonlinear potential
theory (\cite{PrEe, LiKaVa, KoYaZh, DiElVa, Warma15, AdDa, AnPeKa}). Capacity
problems for functions in local and nonlocal spaces have likewise been studied
extensively (\cite{Costea,BB,HoPe,HeJuKiTeMaOl,BBJ,Adams,KaMo}).

Capacity measures the
size of a set relative to the function space, and in the fractional setting
it is not always finite; for certain choices of the parameters it may
be infinite. Our aim is therefore to understand the behavior of the
capacity of annuli when it is finite. We obtain sharp two-sided estimates,
and show that the capacity is comparable to an explicit volume growth
quantity and to quantities depending only on the energies of indicator
functions of dyadic balls.

Related problems have been studied in the case   $p=q\ge1$ in \cite{BB,Costea}.
In \cite{BB} the
authors obtain sharp capacity bounds for fat annuli by interpreting Besov spaces as trace spaces of
first-order Sobolev spaces. In \cite{Costea}, the author studied these
capacities in the borderline case $sp=Q$, where $Q$ is the Ahlfors
regularity dimension of the space. His approach is based on Fubini's
theorem. Neither approach extends directly to the
general $W^s_{p,q}$ setting when $p\neq q$;  see  \cite{BBN}. In this article, we cover the case $p\ne q$ and establish sharp
capacity estimates for both fat and thin
annuli in a more general setting, by direct and self-contained arguments. 

A difficulty when $p\neq q$ is the lack of symmetry in the norm, which prevents us from interchanging the order of integration as in the case $p=q$. This makes the long-range interactions more delicate to control and requires additional assumptions on the underlying space.

\medskip

\emph{Throughout the paper,} we assume that $(X,d,\mu)$ is a metric measure
space with $\mu$ doubling and reverse doubling (see Section~\ref{prelim}). We
fix a point $O\in X$ and write $B_r=B(O,r)$.
We introduce a \emph{dyadic condition} \eqref{Rs,p,q finiteness} on the underlying measure as
\begin{equation}
    D_{s,p,q}(\mu)=  \sum_{j=1}^\infty \frac{\mu(B(O,2^j))^{1-\frac{p}{q}}}{2^{jsp}}<\infty.\tag{D1}
\end{equation}
 If this sum is infinite, we show that there are no admissible
functions in $W^s_{p,q}(X)$ (see Lemma~\ref{necessary condition}). To obtain sharp two-sided capacity estimates, we assume a stronger
\emph{dyadic tail condition} \eqref{eq2_6.9}, which controls the tail of the sum
$D_{s,p,q}$ by its first term, up to a constant. Since $\mu$ is doubling and
reverse doubling, it has an upper growth exponent, say $Q$, and a lower growth
exponent, say $\sigma$, which need not coincide. The dyadic tail condition is
natural in this setting because it reflects the interaction between these two
growth rates, rather than a single dimension. When 
$s>\frac {Q}{p}-\frac{Q}{q}$, the dyadic tail condition automatically holds. Our estimates therefore apply to a broad class of
metric measure spaces, including irregular spaces in which the upper and lower
growth rates differ. We discuss both conditions \eqref{Rs,p,q finiteness} and \eqref{eq2_6.9} in more detail in Section~\ref{UB estimate}. 

We emphasize that no completeness, Poincar\'e inequality, or boundedness
assumption is imposed on the space. Under our assumptions, we
establish sharp bounds for the $\dot W^s_{p,q}(X)$-relative capacity, assuming
the dyadic tail condition \eqref{eq2_6.9} when $p<q$. For a precise definition of capacity, we refer
to Definition~\ref{capacity}. We now state our main results.

\begin{theorem}\label{sharp capacity estimate annuli}
Let $(X,d,\mu)$ be a metric measure
space with $\mu$ doubling and reverse doubling, and let $0<s<1$ and $0<q\le p<\infty$. 
 Then, for $0<2r\le R<\frac{1}{4}\diam(X)$ we have
   \begin{equation}\label{2.2_5.9}
\mathrm{cap}_{s,p,q}(B_r,B_R)
\simeq
\left(\int_{r}^{R}\left(\frac{\hat{r}^{sp}}{\mu(B_{\hat{r}})}\right)^{\frac{1}{p-1}}\frac{d\hat{r}}{\hat{r}}\right)^{1-p },\text{   }p>1,
\end{equation}
and 
\begin{equation}\label{2.3_5.9}
\mathrm{cap}_{s,p,q}(B_r,B_R)\simeq 
\inf_{r\le \hat{r}<2\hat{r}\le R}\left(\frac{\hat{r}^{sp}}{\mu(B_{\hat{r}})}\right)^{-1 },\text{   }p\le 1,
\end{equation}
where the comparison constant depends on $s$, $p$, and $C(c_D,c_{RD},C_{DT})^{1/q}$. 

Moreover, the above estimates also hold for $0<p<q\le\infty$, provided that the measure $\mu$ satisfies the dyadic tail condition \eqref{eq2_6.9}.
\end{theorem}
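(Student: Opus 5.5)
The plan is to discretize dyadically and prove matching upper and lower bounds. Write $r_k=2^k r$ for $k=0,1,\dots,N$, with $N\simeq\log_2(R/r)$, and set $a_k=r_k^{sp}/\mu(B_{r_k})$. By doubling, the integral in \eqref{2.2_5.9} is comparable to $\sum_{k=0}^{N-1}a_k^{1/(p-1)}$. The infimum in \eqref{2.3_5.9} is comparable to $\min_k a_k$. So it suffices to show
\[
\mathrm{cap}_{s,p,q}(B_r,B_R)\simeq\Bigl(\sum_k a_k^{1/(p-1)}\Bigr)^{1-p}\ (p>1),
\qquad
\mathrm{cap}_{s,p,q}(B_r,B_R)\simeq\max_k a_k^{-1}\ (p\le1).
\]
The key single-scale fact is that the energy of $\1_{B_{r_k}}$, or of a Lipschitz cutoff between $B_{r_k}$ and $B_{r_{k+1}}$, is comparable to $a_k^{-1}=\mu(B_{r_k})r_k^{-sp}$.

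\emph{Single-scale estimate.} For a cutoff $\varphi_k$ equal to $1$ on $B_{r_k}$, $0$ off $B_{r_{k+1}}$, with Lipschitz constant $r_k^{-1}$:
\begin{itemize}
\item For $x$ near the annulus, the inner $y$-integral is controlled by splitting $d(x,y)<r_k$ from $d(x,y)\ge r_k$. Since $s<1$, the short range gives $\lesssim r_k^{-sq}$ after summing dyadic shells. The long range uses $\mu(B(x,d))$ in the denominator and gives $r_k^{-sq}$ as well.
\item For $x$ far away, at distance $\rho\ge r_{k+1}$ from $O$, the inner integral is about $\bigl(\mu(B_{r_k})/\mu(B_\rho)\bigr)\rho^{-sq}$. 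Raising this to $p/q$ and integrating over dyadic shells in $\rho$ produces a sum of the form $\sum_j \mu(B_{2^j r_k})^{1-p/q}(2^jr_k)^{-sp}\mu(B_{r_k})^{p/q}$.
\item When $q\le p$, reverse doubling makes this geometric. When $p<q$, this is exactly where \eqref{eq2_6.9} is needed, to bound the tail by its first term.
\end{itemize}
This gives $[\varphi_k]^p\lesssim a_k^{-1}$, with constant of the form $C^{1/q}$. This tail control is the step I expect to be the main obstacle; the $q=\infty$ case is handled by a supremum version of the same computation.

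\emph{Upper bound.} For $p\le1$, use the single best cutoff $\varphi_k$ and take the infimum over $k$. For $p>1$, use the telescoping competitor
\[
u=\sum_k t_k\varphi_k,\qquad t_k=\frac{a_k^{1/(p-1)}}{\sum_j a_j^{1/(p-1)}},\qquad \sum_k t_k=1 .
\]
Estimating $[u]^p$ requires almost-orthogonality of the pieces. Differences $u(x)-u(y)$ involve only the $t_k$ for scales between $d(O,x)$ and $d(O,y)$. I would bound the inner $L^q$ norm at $x$ in the $k$-th annulus by $\sum_j t_j r_j^{-s}\bigl(\mu(B_{r_j})/\mu(B_{\max(r_j,r_k)})\bigr)^{1/q}$-type terms. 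Doubling and reverse doubling (plus \eqref{eq2_6.9} when $p<q$) should make this sum dominated by the diagonal, giving
\[
[u]^p\lesssim\sum_k t_k^p a_k^{-1}=\Bigl(\sum_k a_k^{1/(p-1)}\Bigr)^{1-p}.
\]

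\emph{Lower bound.} Take an admissible $u$ with $u\ge1$ on $B_r$ and $u=0$ off $B_R$; truncate so $0\le u\le1$. Let $m_k$ be the average of $u$ over the annulus $A_k=B_{r_{k+1}}\setminus B_{r_k}$, which is nonempty with measure $\simeq\mu(B_{r_k})$ by reverse doubling. Then $m_{-1}\approx1$ and $m_N=0$ (using $R<\diam X/4$, so there is room outside $B_R$). For $x\in A_k$, bound the inner $y$-integral below by restricting $y$ to $A_{k+1}$. With a Jensen/Hölder step (or the quasi-triangle inequality when $q<1$ or $p<1$), this gives
\[
\int_{A_k}\Bigl(\int_{A_{k+1}}\cdots\Bigr)^{p/q}\,d\mu\gtrsim a_k^{-1}\,|m_k-m_{k+1}|^p .
\]
Here care is needed: a pointwise lower bound via $|u(x)-m_{k+1}|$ is obtained first, then averaged over $x$. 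Summing,
\[
[u]^p\gtrsim\sum_k a_k^{-1}|m_k-m_{k+1}|^p,\qquad \sum_k|m_k-m_{k+1}|\gtrsim1 .
\]
For $p>1$, Hölder gives $1\le\bigl(\sum_k a_k^{-1}\delta_k^p\bigr)^{1/p}\bigl(\sum_k a_k^{1/(p-1)}\bigr)^{(p-1)/p}$, which is the claimed lower bound. For $p\le1$, concavity gives $\sum_k a_k^{-1}\delta_k^p\ge\min_k a_k^{-1}\bigl(\sum_k\delta_k\bigr)^p$. This lower bound needs no tail condition and works for all $q$.
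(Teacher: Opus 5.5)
Your architecture matches the paper's: dyadic scales, a Lipschitz cutoff with \eqref{eq2_6.9} for the far tail, a telescoping competitor with weights $t_k\propto a_k^{1/(p-1)}$, and H\"older or \eqref{ineq-p<1} in the lower bound. Using Lipschitz cutoffs $\varphi_k$ instead of the paper's near-minimizers $u_{i-1}$ is harmless. The step that fails is the per-scale lower bound with \emph{averages}. Write
\[
E_k(u):=\int_{A_k}\Bigl(\int_{A_{k+1}}\frac{|u(x)-u(y)|^q}{d(x,y)^{sq}\mu(B(x,d(x,y)))}\,d\mu(y)\Bigr)^{p/q}d\mu(x).
\]
The inequality $E_k(u)\gtrsim a_k^{-1}|m_k-m_{k+1}|^p$ follows from Jensen when $p,q\ge1$. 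It is false as soon as $p<1$ or $q<1$, and no quasi-triangle inequality rescues it: Jensen runs the wrong way for concave powers, so averages see sets of small measure that the energy barely charges. Two counterexamples in $\R^n$, with $\theta\in(0,1)$ small:
\begin{itemize}
\item[(a)] Let $u=0$ on $A_{k+1}$ and $u=\1_S$ on $A_k$, with $\mu(S)=\theta\mu(A_k)$ and $S$ at distance $\ge r_k/4$ from $X\setminus B_{r_{k+1}}$. Then $|m_k-m_{k+1}|=\theta$, but $E_k(u)\simeq\theta a_k^{-1}\ll\theta^p a_k^{-1}$ when $p<1$.
\item[(b)] Let $u=0$ on $A_k$ and $u=\1_S$ on $A_{k+1}$, with $\mu(S)=\theta\mu(A_{k+1})$ and $S$ at distance $\ge r_k$ from $B_{r_{k+1}}$. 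Then $E_k(u)\simeq\theta^{p/q}a_k^{-1}\ll\theta^p a_k^{-1}$ when $q<1$.
\end{itemize}
Case (b) lies inside the main range $q\le p$, for instance $q=\tfrac12$, $p=2$. Using $0\le u\le1$ you can salvage averages when $p\le\min\{1,q\}$, via the linear bound $|m_k-m_{k+1}|\lesssim a_kE_k(u)$, but the per-scale route breaks down when $q<\min\{1,p\}$.

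The missing idea is to use medians, as the paper does. If $m_u(A_k)\ge m_u(A_{k+1})$, then $u(x)-u(y)\ge m_u(A_k)-m_u(A_{k+1})$ for all $x$ in a subset of $A_k$ of measure $\ge\mu(A_k)/2$ and all $y$ in a subset of $A_{k+1}$ of measure $\ge\mu(A_{k+1})/2$. This yields Lemma~\ref{median prps}(iii) for every $p,q>0$ without any convexity; in (a) and (b) both medians vanish. With medians in place of averages, the rest of your chain goes through and needs no tail condition: telescope from $m_u(B_r)=1$ to $m_u(A_N)=0$, then apply H\"older for $p>1$ and \eqref{ineq-p<1} for $p\le1$.

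On the upper bound, your claim that the cross terms are ``dominated by the diagonal'' is true, but it is the real content of Theorem~\ref{thm0_1.9}. The single-scale tail you flagged as the main obstacle is only the short computation of Lemma~\ref{lem0_6.9}. For $x$ at scale $k$ and $y$ at scale $l>k$ one has $|u(x)-u(y)|\le\sum_{j=k}^{l}t_j$, a partial sum over many scales. So you must prove
\[
\sum_k\mu(B_{r_k})\Bigl(\sum_{l>k}r_l^{-sq}\Bigl(\sum_{j=k}^{l}t_j\Bigr)^{q}\Bigr)^{p/q}\lesssim\sum_k t_k^p\,\frac{\mu(B_{r_k})}{r_k^{sp}}.
\]
You also need the inward analogue, with weights $\mu(B_{r_k})^{1-p/q}r_k^{-sp}$ and inner sums $\sum_{l<k}\mu(B_{r_l})\bigl(\sum_{j=l}^{k}t_j\bigr)^q$. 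This analogue also absorbs the points $x\notin B_R$, and that is where \eqref{eq2_6.9} enters for $p<q$. Both are weighted discrete Hardy inequalities. The paper obtains them from Lemma~\ref{discrete weighted Hardy} after a H\"older step with a small exponent ($\delta$, respectively $\delta_0<\sigma/q$), which trades the geometric decay of $r_l^{-sp}$ against reverse doubling \eqref{iteration reverse}. Your proposal needs to supply this argument rather than assert it.
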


Theorem~\ref{sharp capacity estimate annuli} shows that the $(s,p,q)$-capacity is comparable to the $B^s_{p,p}$-capacity of fat annuli, and the exponent $q$ does not really play a role in the estimates. This is somewhat similar to the situation in the classical theory
of Adams--Hedberg (see \cite[Section~4.4]{AdDa}). 

It is also somewhat
surprising that the upper bound is harder to obtain than the lower bound;
an upper bound only requires a single competitor, but here that competitor
must have finite energy, and proving this requires controlled 
long-range interactions, which is where the dyadic conditions enter.



\medskip
We postpone the detailed capacity estimates for thin annuli to
Section~\ref{thin annuli section}, since they require more technical
arguments and rely on a thin annular decay property. The following corollary
is a simplified statement, covering both fat and thin annuli, of
Corollary~\ref{ind_fat annuli} and Corollary~\ref{ind_thin annuli}. It
follows from Theorem~\ref{sharp capacity estimate annuli} and
Theorem~\ref{thin annuli_main}, and it gives the reader a preview of the form
of the thin annuli estimates.

\begin{corollary}\label{coro_14.9}
Let $X=\R^n$ be equipped with the Euclidean metric and the Lebesgue measure $\mu$. Let $p\in(0,\infty)$, $q\in(0,\infty]$, $\max\{0,\frac{n}{p}-\frac{n}{q}\}<s<1$  and  $sp<1$. Then for $0<2r\leq R<\infty$ we have
    \begin{equation}
              \mathrm{cap}_{s,p,q}(B_r,B_R)\simeq 
              \begin{cases}
              \displaystyle
     \left(\int_{r}^{R}\,[\1_{B_{\hat{r}}}]_{{W}^s_{p,q}(\R^n)}^{\frac{- p}{p-1}}\frac{d\hat{r}}{\hat{r}}\right)^{1-p }, & p>1,\\ \\
\displaystyle
   \underset{r\leq \hat r<2\hat r\leq R}{\inf}\,[\1_{B_{\hat{r}}}]_{{W}^s_{p,q}(\R^n)}^p\qquad, & p\le 1,
\end{cases}
            \end{equation}
            and for $0<\frac{R}{2}\le r<R<\infty$ we have
            \begin{equation}
              \mathrm{cap}_{s,p,q}(B_r,B_{R})\simeq [\1_{B_{r}}]_{{W}^s_{p,q}(\R^n)}^p.
            \end{equation}
            The comparison constants above depend on $s$, $p$, and $C(c_D,c_{RD},C_{DT})^{1/q}$.
\end{corollary}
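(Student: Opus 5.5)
The plan is to deduce Corollary~\ref{coro_14.9} from Theorem~\ref{sharp capacity estimate annuli} for fat annuli and from Theorem~\ref{thin annuli_main} for thin annuli. The only new ingredient is the identification
\[
[\1_{B_{\hat r}}]_{W^s_{p,q}(\R^n)}^p\simeq \frac{\mu(B_{\hat r})}{\hat r^{sp}}\simeq \hat r^{\,n-sp},
\]
valid for every $\hat r>0$ when $sp<1$ and $s>\frac np-\frac nq$.

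First I check the hypotheses in $\R^n$. Lebesgue measure is doubling and reverse doubling with $Q=\sigma=n$, and $\diam(X)=\infty$, so the restriction $R<\frac14\diam X$ is vacuous. When $p<q$, the dyadic tail condition \eqref{eq2_6.9} holds because $s>\frac np-\frac nq$. Indeed, $\mu(B_{2^j})^{1-p/q}2^{-jsp}=c\,2^{j(n-np/q-sp)}$ is a geometric sequence with negative ratio exponent, so its tail is controlled by its first term. As the paper notes, this is exactly the automatic case $s>\frac Qp-\frac Qq$.

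Lebesgue measure also satisfies the $\eta$-annular decay property with $\eta=1$, since $|B_{r+\varepsilon r}\setminus B_r|\simeq \varepsilon r^n$. The assumption $sp<1=\eta$ therefore puts us in the regime of Theorem~\ref{thin annuli_main} where the capacity stays bounded. I expect that theorem to give $\mathrm{cap}_{s,p,q}(B_r,B_R)\simeq[\1_{B_r}]^p_{W^s_{p,q}}$ for $R/2\le r<R$, which is the second display of the corollary.

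Second, I compute the energy of the indicator. By scaling $x\mapsto \hat r x$ and using $\mu(B(x,d))=c_nd^n$, one gets $[\1_{B_{\hat r}}]^p=\hat r^{\,n-sp}[\1_{B_1}]^p$. It then suffices to show $0<[\1_{B_1}]_{W^s_{p,q}}<\infty$; positivity is trivial. For finiteness, fix $x\in B_1$ with $\delta=1-|x|$. The inner integral is taken over $y\notin B_1$, and such $y$ satisfy $|x-y|\ge\delta$, so it is bounded by
\[
\int_{|y-x|\ge\delta}|x-y|^{-sq-n}\,dy\simeq \delta^{-sq}.
\]
The inner integral raised to the power $p/q$ is therefore $\lesssim\delta^{-sp}$, which is integrable near the sphere because $sp<1$.

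For $|x|>1$ the inner integral runs over $y\in B_1$. For $1<|x|<2$ it is $\lesssim(|x|-1)^{-sq}$, which is handled the same way. For $|x|\ge2$ it is $\simeq|x|^{-sq-n}$, so after the power $p/q$ we get $|x|^{-sp-np/q}$. This is integrable at infinity precisely when $sp+np/q>n$, that is, $s>\frac np-\frac nq$. When $q=\infty$ the same computation works with sups, and when $q\ge p$ the conditions are consistent. The main obstacle is this tail step, and it is exactly where the lower bound on $s$ is used.

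Finally, I substitute. Since $\hat r^{sp}/\mu(B_{\hat r})\simeq[\1_{B_{\hat r}}]^{-p}$, the integrand $\bigl(\hat r^{sp}/\mu(B_{\hat r})\bigr)^{1/(p-1)}$ in \eqref{2.2_5.9} becomes $[\1_{B_{\hat r}}]^{-p/(p-1)}$, which gives the case $p>1$. In \eqref{2.3_5.9}, $\inf(\hat r^{sp}/\mu)^{-1}$ becomes $\inf[\1_{B_{\hat r}}]^p$, which gives the case $p\le1$. The constants depend only on $n,s,p,q$ through the constants of the theorems and $[\1_{B_1}]$.
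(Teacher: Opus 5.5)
Your proposal is correct and follows the paper's route. Like the paper, you deduce Corollary~\ref{coro_14.9} from Theorem~\ref{sharp capacity estimate annuli} and Theorem~\ref{thin annuli_main} via $[\1_{B_{\hat r}}]^p_{W^s_{p,q}(\R^n)}\simeq \mu(B_{\hat r})/\hat r^{sp}$. The only difference is how you get this identity: you use exact dilation scaling plus a direct finiteness check of $[\1_{B_1}]$, which is essentially Lemma~\ref{lem0_6.10} specialized to $\R^n$. The paper instead takes the upper bound from Lemma~\ref{lem0_6.10} and the lower bound from the admissibility of $\1_{B_{\hat r}}$ for $\mathrm{cap}_{s,p,q}(B_{\hat r},B_{2\hat r})$ together with Corollary~\ref{lem0_4.9.26}.

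Your identity also makes the thin case immediate. The function $\1_{B_r}$ is admissible for $(B_r,B_R)$, and $\mathrm{cap}_{s,p,q}(B_r,B_R)\ge\mathrm{cap}_{s,p,q}(B_r,B_{2r})\gtrsim r^{n-sp}$ by Corollary~\ref{lem0_4.9.26}. So you do not need the reverse annular decay hypothesis of Theorem~\ref{thin annuli_main}; as literally stated, that hypothesis fails for Lebesgue measure in $\R^n$, $n\ge 2$, for shells near the center.
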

Corollary~\ref{coro_14.9} reduces the study of capacity to that of the
$\dot{W}^s_{p,q}$-energies of indicator functions of balls, whose behavior across
scales is governed by the scaling properties of the norm. This allows us to estimate the capacity without first determining the growth exponents of the measure, which may be difficult to compute for a given doubling and reverse doubling measure.

\medskip

We now state the following consequences of the estimate obtained in Theorem~\ref{sharp capacity estimate annuli} that may be of interest.

\begin{corollary}[Properties of the Capacity]\label{prop0_13.9}
In the setting of Theorem~\ref{sharp capacity estimate annuli}, we give the following properties:
\begin{itemize}
    \item[(i)] If
     \begin{equation}
\int_{0}^{R}\left(\frac{\hat{r}^{sp}}{\mu(B_{\hat{r}})}\right)^{\frac{1}{p-1}}\frac{d\hat{r}}{\hat{r}}=\infty, \quad p>1, \text{  and  }\limsup_{r\to0}\frac{r^{sp}}{\mu(B_r)}=\infty,\quad p\le 1,
    \end{equation}
    then  $\mathrm{cap}_{s,p,q}(\{O\},B_R)=0$.\vspace*{0.3cm}
   \item[(ii)] Let $X$ be an unbounded metric space. If
    \begin{equation}
       \int_{1}^{\infty}\left(\frac{\hat{r}^{sp}}{\mu(B_{\hat{r}})}\right)^{\frac{1}{p-1}}\frac{d\hat{r}}{\hat{r}}=\infty, \quad p>1, \text{ and }\limsup_{r\to\infty}\frac{r^{sp}}{\mu(B_r)}=\infty,\quad p\le 1,
    \end{equation}
    then  $\mathrm{cap}_{s,p,q}(C,X)=0$, whenever $C$ is a compact subset of $X$.\vspace*{0.3cm}
   \item[(iii)]  If $sp=s'p'$ and $0<q,q'\le\infty$, then $\mathrm{cap}_{s,p,q}(B_r,B_R)\simeq\mathrm{cap}_{s',p',q'}(B_r,B_R)\simeq\mathrm{cap}_{s,p,p}(B_r,B_R)$.
\end{itemize}
\end{corollary}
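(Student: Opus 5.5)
All three parts will be derived directly from the two-sided estimates \eqref{2.2_5.9} and \eqref{2.3_5.9} of Theorem~\ref{sharp capacity estimate annuli}, using two standard properties of the relative capacity: monotonicity in both sets, and the fact that a competitor for a larger pair is a competitor for a smaller one. That is, $\mathrm{cap}_{s,p,q}(E,F)\le \mathrm{cap}_{s,p,q}(E',F)$ if $E\subset E'$, and $\mathrm{cap}_{s,p,q}(E,F)\le \mathrm{cap}_{s,p,q}(E,F')$ if $F'\subset F$.

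For (i), fix $R$ and let $r\to 0$. Since $\{O\}\subset B_r$, we have $\mathrm{cap}_{s,p,q}(\{O\},B_R)\le \mathrm{cap}_{s,p,q}(B_r,B_R)$ for every $0<2r\le R$.
\begin{itemize}
\item[$\bullet$] When $p>1$, \eqref{2.2_5.9} bounds the right-hand side by $C\bigl(\int_r^R (\hat r^{sp}/\mu(B_{\hat r}))^{1/(p-1)}\,d\hat r/\hat r\bigr)^{1-p}$. By hypothesis the integral tends to $\infty$ as $r\to0$, and since $1-p<0$ the bound tends to $0$.
\item[$\bullet$] When $p\le1$, \eqref{2.3_5.9} gives $\mathrm{cap}_{s,p,q}(B_r,B_R)\le C\,\mu(B_{\hat r})/\hat r^{sp}$ for every admissible $\hat r\in[r,R/2]$. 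Choosing $\hat r$ along a sequence tending to $0$ on which $\hat r^{sp}/\mu(B_{\hat r})\to\infty$ (possible by the $\limsup$ hypothesis), and taking $r\le\hat r$, gives the claim.
\end{itemize}
We must keep $R<\frac14\diam X$. If $R$ is larger, the target set $B_R$ only grows, so the capacity only decreases by monotonicity, and we reduce to a small $R$.

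For (ii), take a compact $C$ and choose $r$ with $C\subset B_r$. Monotonicity gives $\mathrm{cap}_{s,p,q}(C,X)\le \mathrm{cap}_{s,p,q}(B_r,X)\le \mathrm{cap}_{s,p,q}(B_r,B_R)$ for all $R\ge 2r$, since $B_R\subset X$. Because $X$ is unbounded, every $R$ satisfies $R<\frac14\diam X$, so Theorem~\ref{sharp capacity estimate annuli} applies for all $R$. Letting $R\to\infty$, the divergence of $\int_1^\infty$ (equivalently of $\int_r^\infty$, since the portion over $[r,1]$ is finite) shows the bound tends to $0$ when $p>1$. When $p\le1$, we take $\hat r\to\infty$ along a sequence realizing the $\limsup$ and $R\ge 2\hat r$.

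For (iii), the right-hand sides of \eqref{2.2_5.9} and \eqref{2.3_5.9} depend on $(s,p,q)$ only through $sp$ and $p$. The catch is that $p$ itself appears, both as the exponent $1/(p-1)$ and in the threshold $p>1$ versus $p\le1$. So when $p\neq p'$ the two expressions are not literally equal, and I would need to check their comparability. In the principal case $p=p'$ (so $s=s'$), the claim is immediate: apply the theorem with $q$, with $q'$, and with $q=p$ (the case $q\le p$, needing no dyadic tail condition), and chain the three comparisons. The main obstacle is the general case $p\ne p'$. There I would use that $\hat r^{sp}/\mu(B_{\hat r})$ is common to both, and that doubling and reverse doubling make this quantity essentially constant on dyadic shells. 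I would then discretize the integral as the sum $\sum_k a_k^{1/(p-1)}$ over dyadic $k$, with $a_k=2^{ksp}/\mu(B_{2^k})$, and try to compare the $\ell^{1/(p-1)}$ and $\ell^{1/(p'-1)}$ quantities. If that comparison fails in general, I would state (iii) for $p=p'$, which is what the $q$-independence in the theorem directly gives. The dependence of constants on $q$ enters only through $C(c_D,c_{RD},C_{DT})^{1/q}$, uniformly as stated.
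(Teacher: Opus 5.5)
Your treatment of (i) and (ii) is the paper's argument: bound the capacity, using monotonicity, by $\mathrm{cap}_{s,p,q}(B_r,B_R)$, and then let $r\to0$ (respectively $R\to\infty$) in \eqref{2.2_5.9}--\eqref{2.3_5.9}. The details you add are correct and fill in what the paper leaves implicit. These are the reduction to $R<\frac14\diam(X)$ via monotonicity in the outer set, and the choice of $\hat r$ along a sequence realizing the $\limsup$ when $p\le1$.

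For (iii), the paper only says it follows immediately from Theorem~\ref{sharp capacity estimate annuli}. That one-line justification is valid exactly in your case $p=p'$ (hence $s=s'$), where the claim is just the $q$-independence of the bounds. Your suspicion about $p\neq p'$ is justified, and you should not pursue the planned comparison of the $\ell^{1/(p-1)}$ and $\ell^{1/(p'-1)}$ quantities: it is false in general, so (iii) as stated cannot be proved.

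Take $X=\R$ with Lebesgue measure, $(s,p,q)=(\tfrac12,2,2)$ and $(s',p',q')=(\tfrac13,3,3)$. Then $sp=s'p'=1$ and $\hat r^{sp}/\mu(B_{\hat r})\equiv\tfrac12$, so \eqref{2.2_5.9} gives
\[
\mathrm{cap}_{s,p,q}(B_r,B_R)\simeq\bigl(\log(R/r)\bigr)^{-1},\qquad
\mathrm{cap}_{s',p',q'}(B_r,B_R)\simeq\bigl(\log(R/r)\bigr)^{-2}.
\]
The ratio of these is unbounded as $R/r\to\infty$. This is simply Costea's law $(\log(R/r))^{1-p}$ at $sp=Q$, which visibly depends on $p$. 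In your discretized notation, take $a_k\equiv a$ constant over $n\approx\log_2(R/r)$ dyadic scales. Then $\bigl(\sum_k a_k^{1/(p-1)}\bigr)^{1-p}=a^{-1}n^{1-p}$, and the power of $n$ changes with $p$.

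So the correct content of (iii) is the case $p=p'$ with arbitrary $q,q'$. The dyadic tail condition is needed whenever $q>p$ or $q'>p$. To allow $p\neq p'$ you need an extra hypothesis under which $\hat r^{sp}/\mu(B_{\hat r})$ is geometrically dominated by one endpoint scale. For instance, if $\sigma>sp$ in \eqref{iteration reverse}, then \eqref{iteration reverse} and Lemma~\ref{Lemma 8.1} show that both capacities are $\simeq\mu(B_r)/r^{sp}$ for every $p$. Symmetrically, $Q<sp$ in \eqref{iteration doubling} gives $\simeq\mu(B_R)/R^{sp}$.
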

\begin{remark}
    We note that the claim in Corollary~\ref{prop0_13.9} (iii) also holds for thin
annuli, that is, when $0<R/2\le r<R$; further results on thin annuli are
given in Section~\ref{thin annuli section}.
\end{remark}




We use the following notation throughout.  We write $a\lesssim b$ (equivalently, $b\gtrsim a$) if there exists a constant $C\in(0,\infty)$, independent of the relevant parameters, such that $a\leq C b$. We write $a\approx b$ if $a\lesssim b$ and $b\lesssim a$. The characteristic function of a set $E\subset X$ is denoted by
$ \1_E$, that is, $\1_E(x)=1$ for $x\in E$ and $ \1_E(x)=0$
for $x\in X\setminus E$. We denote by $L^0(X)$ the space of those $\mu$-measurable
functions $u\colon X\to[-\infty,\infty]$ that are finite $\mu$-almost everywhere.

 \medskip

The paper is organized as follows. Section~\ref{prelim} contains the
definitions and the known results needed in the rest of the paper. In
Section~\ref{sharp capacity}, we establish sharp capacity estimates for fat
annuli. In Section~\ref{thin annuli section}, we estimate the capacities of
thin annuli under the thin annular decay property, and we characterize the
capacities of fat and thin annuli through indicator functions.

 \section{Preliminiaries}\label{prelim}
We introduce our standing assumptions on the metric space and the measure. Let $(X,d,\mu)$ be a metric measure space. Here $\mu$ is a positive  complete Borel measure on $X$ such that \[0<\mu(B(x,r))<\infty\] for all $x\in X$ and $r>0$, where  $B(x,r)=\{y\in X\,:\, d(y,x)<r\}$  is the open ball of radius $r>0$ and center $x\in X$.
The measure $\mu$ is  assumed to be doubling, that is, there exists a constant $c_{D}\geq1$ such that, whenever $x\in X$ and $r>0$, we have
\begin{equation}\label{doubling}
    \mu(B(x,2r))\leq c_{D}\,\mu(B(x,r)).
\end{equation}
We also assume that 
the measure $\mu$ is reverse doubling, that is, there exists a constant $0<c_{RD}<1$ such that
\begin{equation}\label{reverse doubling}
\mu(B(x,r))\le c_{RD}\,   \mu(B(x, 2r))  
\end{equation}
for all $x\in X$ and all $0<2 r<\mathrm{diam}(X)$.\\

We also give the following iterated versions
of the doubling and reverse doubling conditions.
Iteration of the doubling condition \eqref{doubling} gives an exponent $Q>0$ and a constant $c_{Q}>0$, both only depending on $c_{D}$, such that
\begin{equation}\label{iteration doubling}
   \frac{\mu(B(y,r))}{\mu(B(x,r_0))}\geq c_{Q}\left(\frac{r}{r_0}\right)^{Q}
\end{equation}
whenever $x\in X$, $y\in B(x,r_0)$ and $0<r<r_0$. 
On the other hand, the doubling property and iteration of the reverse doubling condition \eqref{reverse doubling} give an exponent $\sigma>0$ and a constant $c_{\sigma}>0$, both only depending on $c_{RD}$, such that the  quantitative reverse doubling condition
\begin{equation}\label{iteration reverse}
   \frac{\mu(B(x,r))}{\mu(B(x,r_0))}\leq c_{\sigma}\left(\frac{r}{r_0}\right)^{\sigma}
\end{equation}
holds for all $x\in X$ and $0<r<r_0<\mathrm{diam}(X)$.  We refer
to the proof of \cite[Corollary 3.8]{BB}. Given $x\in X$ and $r>0$, we write $A_r(x)=B(x,2 r)\setminus B(x,r)$. Then it also follows from \eqref{reverse doubling} that $\mu(A_r(x))\approx\mu(B(x,2r))$ for $0<2r<\diam(X)$  with constants depending only on $c_{RD}$.

\subsection*{Fractional Sobolev Spaces}
Let $s\in(0,1)$, $p\in(0,\infty)$ and $q\in(0,\infty]$. The homogeneous fractional Sobolev space $\dot{W}^s_{p,q}(X)$ consists of all $u\in L^0(X)$ for which  
	\[
	[u]_{W^s_{p,q}(X)} = \left(\int_{X}\left(\int_{X}\frac{|u(x)-u(y)|^q}{d(x,y)^{s q}\mu(B(x,d(x,y)))}\,d\mu(y)\right)^{\frac{p}{q}}\,d\mu(x)\right)^\frac{1}{p}<\infty\,,
	\]
    with the usual modification when $q=\infty$. 
The Fractional Sobolev space  ${W}^s_{p,q}(X)$ is $\dot{W}^s_{p,q}(X)\cap L^p(X)$ equipped with the quasinorm 
\[
     \|u\|_{{W}^s_{p,q}(X)}=\|u\|_{L^p(X)}+[u]_{W^s_{p,q}(X)}\,.
\]

\begin{definition}\label{capacity}
Let $\Omega\subset X$ be a bounded open set such that $\mu(X\setminus \Omega)>0$. Then the $\dot{W}_{p,q}^{s}(X)$ \emph{relative capacity of a bounded  set $E\subset \Omega$} is 
	\begin{equation}
    \begin{split}
        \mathrm{cap}_{s,p,q}&(E,\Omega)=\inf_u\,\,\int_{X}\left(\int_{X}\frac{|u(x)-u(y)|^q}{d(x,y)^{sq}\mu(B(x,d(x,y)))}\,d\mu(y)\right)^{\frac{p}{q}}\,d\mu(x),
    \end{split}
	\end{equation}
where infimum is taken over all measurable functions $u$ such that $u=1$ on a neighbourhood of $E$, $0\leq u\leq 1$ for all $x\in \Omega$ and $u=0$ in $\Omega^c$. If there are no such admissible functions that satisfy the above conditions, then we set $\mathrm{cap}_{s,p,q}(E,\Omega)=\infty$.
\end{definition}

\subsection*{Medians}\label{median definition}
Let $u\in L^0(X)$ and $E\subset X$ be a set of finite positive measure. Define the median of $u$ over $E$ as
$$
m_u(E)= \inf\left\{ M\in\R \ | \ \mu\left(\{x\in E\ |\ u(x)>M\}\right) < \frac{\mu(E)}{2}\right\}.
$$


\begin{lemma}[Lemma~2.7 \cite{HeKoTu,AnPeKa}]\label{median prps}
 Let $E,F$ be measurable subsets of $X$ of finite positive measure and $u\in L^0(X)$. Then the median has the following properties:
  \begin{itemize}
      \item[(i)] If $u\le v$ almost everywhere, then $m_u(E)\le m_v(E)$.
      \item[(ii)] For every $p>0$,
      \begin{equation}
          m_{|u|}(E)\leq\left(2\intav_E|u|^p\, d\mu(x)\right)^\frac{1}{p}.
      \end{equation}
      \item[(iii)] For every $p,q>0$,
      \begin{equation}
          |m_u(E)-m_u(F)|^p\leq\,2^{1+\frac{p}{q}} \intav_E\left(\intav_F\frac{|u(x)-u(y)|^q}{d(x,y)^{sq}\mu(B(x,d(x,y)))}\,d\mu(y)\right)^\frac{p}{q}\,d\mu(x).
      \end{equation}
  \end{itemize}
\end{lemma}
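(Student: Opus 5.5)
The plan is to obtain all three properties from the definition of the median, using two elementary facts: monotonicity, and a Chebyshev-type bound for $|u|$. In (iii) the bound for $|u|$ is applied twice, once in each variable.

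\emph{(i) Monotonicity.} Suppose $u\le v$ a.e. Then for every $M$,
\[
\{x\in E: u(x)>M\}\subset\{x\in E: v(x)>M\}
\]
up to a null set. Hence every $M$ admissible in the infimum defining $m_v(E)$ is also admissible for $m_u(E)$, which gives $m_u(E)\le m_v(E)$.

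\emph{(ii) Bound by a $p$-mean.} Put $M_0=\bigl(2\intav_E|u|^p\,d\mu\bigr)^{1/p}$ and let $M>M_0$. Chebyshev's inequality gives
\[
\mu(\{x\in E:|u(x)|>M\})\le M^{-p}\int_E|u|^p\,d\mu<\frac{\mu(E)}{2}.
\]
So every $M>M_0$ is admissible, and therefore $m_{|u|}(E)\le M_0$.

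\emph{(iii) Difference of medians.} First I record that $m_{u+c}(E)=m_u(E)+c$ for every constant $c$. Together with (i) applied to $\pm(u-c)\le|u-c|$, this yields
\[
|m_u(E)-c|\le m_{|u-c|}(E).
\]
Taking $c=m_u(F)$ and applying (ii) with exponent $p$ gives
\[
|m_u(E)-m_u(F)|^p\le 2\intav_E|u(x)-m_u(F)|^p\,d\mu(x).
\]
Next I fix $x\in E$ and apply the same two facts to $y\mapsto u(y)$ on $F$ with the constant $u(x)$, now using (ii) with exponent $q$:
\[
|u(x)-m_u(F)|\le m_{|u(x)-u(\cdot)|}(F)\le\Bigl(2\intav_F|u(x)-u(y)|^q\,d\mu(y)\Bigr)^{1/q}.
\]
Raising this to the power $p$ and inserting it into the previous display gives the unweighted estimate
\[
|m_u(E)-m_u(F)|^p\le 2^{1+\frac{p}{q}}\intav_E\Bigl(\intav_F|u(x)-u(y)|^q\,d\mu(y)\Bigr)^{p/q}d\mu(x),
\]
which already carries exactly the constant $2^{1+p/q}$ appearing in the lemma.

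\emph{The main obstacle is the kernel.} To pass to the stated form with the factor $\bigl(d(x,y)^{sq}\mu(B(x,d(x,y)))\bigr)^{-1}$, I would insert it pointwise. This is legitimate whenever
\[
d(x,y)^{sq}\mu(B(x,d(x,y)))\le 1\quad\text{for } x\in E,\ y\in F,
\]
because then $|u(x)-u(y)|^q$ is dominated by the weighted integrand. I do not see how to obtain the weighted inequality without such a condition. Scaling $d$ and $\mu$ leaves the left-hand side unchanged but rescales the right-hand side, so the inequality as literally written cannot hold unconditionally. I would therefore present the unweighted inequality above as the proved core of (iii). The weighted version then follows under the normalization above; otherwise it must carry a factor $\sup_{x\in E,\,y\in F}\bigl(d(x,y)^{sq}\mu(B(x,d(x,y)))\bigr)^{p/q}$, which is presumably the form used later for $E,F$ at comparable scales in \cite{HeKoTu,AnPeKa}. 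This part of the plan yields only the conditional version of (iii) and not the statement verbatim.
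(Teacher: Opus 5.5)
Your arguments for (i), (ii) and the unweighted form of (iii) are the usual ones; the paper gives no proof of its own and only cites \cite{HeKoTu,AnPeKa}. Your suspicion about the kernel is justified: (iii) as printed is false.

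For a counterexample, take $X=\R$ with $\mu$ equal to $\lambda$ times Lebesgue measure, $u=\1_{[0,1]}$, $E=[0,1]$ and $F=[2,3]$. Then $m_u(E)-m_u(F)=1$. For $x\in E$, $y\in F$ the integrand is at most $(2\lambda)^{-1}$, so the right-hand side is at most $2\lambda^{-p/q}$, which is $<1$ for large $\lambda$.

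The inequality that holds is your unweighted one, and it is also the one the paper actually uses. In the proof of Lemma~\ref{UB capacity_ball}, the median lemma first produces
$\bigl(\intav_{A_i}\bigl(\intav_{A_{i+1}}|u(x)-u(y)|^q\,d\mu(y)\bigr)^{p/q}d\mu(x)\bigr)^{1/p}$.
The kernel is inserted only in the next line, via $d(x,y)\lesssim 2^ir$ and $\mu(B(x,d(x,y)))\lesssim\mu(B_i)$ for $x\in A_i$, $y\in A_{i+1}$. That is exactly your ``supremum factor'' version. So the weighted form is a misprint in the statement, not a missing piece of your plan.

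One step in your argument needs an extra line. With this definition of the median, $m_{-w}(E)\ne -m_w(E)$ in general. For example, if $w$ takes the values $0$ and $1$ on two halves of $E$, then $m_w(E)=1$ and $m_{-w}(E)=0$. So ``(i) applied to $\pm(u-c)\le|u-c|$'' does not by itself give $c-m_u(E)\le m_{|u-c|}(E)$.

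What you need is the one-sided inequality $-m_w(E)\le m_{-w}(E)$. Suppose it failed. Since the admissible set in the definition is upward closed, you could pick $m_w(E)<M<M'<-m_{-w}(E)$ with $\mu(\{w>M\}\cap E)<\mu(E)/2$ and $\mu(\{w<M'\}\cap E)<\mu(E)/2$. Because $M<M'$ and $w$ is finite a.e., these two sets cover $E$ up to a null set, which is a contradiction.

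Apply this with $w=u-c$ on $E$, and with $w=u(\cdot)-u(x)$ on $F$ for a.e.\ fixed $x$. With that addition, your proof of the unweighted inequality with constant $2^{1+p/q}$ is complete.
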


 \subsection*{An inequality for $p\leq 1$}If $a_i\geq0$ for all $i$ and $0<p\leq1$, then
		\begin{equation} \label{ineq-p<1}
			\sum_{i\in\Z}a_i\leq \left(\sum_{i\in\Z}a_i^{p}\right)^{\frac{1}{p}}.
		\end{equation}
We state the following lemmas that are useful for our purpose.
\begin{lemma}\cite{BB}\label{Lemma 8.1}
Let $0<\Theta_1<\Theta_2<\infty$ and $\lambda,r>0$. If $\mu$ is doubling, then
\[
\int_{\Theta_1 r}^{\Theta_2 r}
\left(
\frac{\hat{r}^{s p}}{\mu(B_{\hat{r}})}
\right)^{\lambda}
\frac{d\hat{r}}{\hat{r}}
\simeq
\left(
\frac{r^{s p}}{\mu(B_r)}
\right)^{\lambda},
\]
with comparison constant depending only on
$\Theta_1$, $\Theta_2$, $s$, $p$, $\lambda$ and $c_D$.
\end{lemma}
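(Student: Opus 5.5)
The statement to prove is Lemma~\ref{Lemma 8.1}. It is a direct consequence of doubling alone, and the plan is to compare the integrand pointwise with its value at $\hat r=r$.

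\emph{Step 1: pointwise comparison of the integrand.} Fix $\hat r\in[\Theta_1 r,\Theta_2 r]$. Choose integers $k_1,k_2\ge0$ depending only on $\Theta_1,\Theta_2$, with $2^{-k_1}\le\Theta_1$ and $\Theta_2\le 2^{k_2}$. Then $B_{2^{-k_1}r}\subset B_{\hat r}\subset B_{2^{k_2}r}$. Iterating the doubling inequality \eqref{doubling} $k_1$ or $k_2$ times gives
\[
c_D^{-k_1}\mu(B_r)\le\mu(B_{2^{-k_1}r})\le\mu(B_{\hat r})\le\mu(B_{2^{k_2}r})\le c_D^{k_2}\mu(B_r).
\]
Together with $\Theta_1^{sp}r^{sp}\le\hat r^{sp}\le\Theta_2^{sp}r^{sp}$, this yields
\[
\frac{\Theta_1^{sp}}{c_D^{k_2}}\,\frac{r^{sp}}{\mu(B_r)}\le\frac{\hat r^{sp}}{\mu(B_{\hat r})}\le\Theta_2^{sp}c_D^{k_1}\,\frac{r^{sp}}{\mu(B_r)}.
\]

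\emph{Step 2: raise to the power $\lambda$ and integrate.} Since $\lambda>0$, raising to the power $\lambda$ preserves both inequalities. Integrating against $d\hat r/\hat r$ over $[\Theta_1 r,\Theta_2 r]$ produces the factor
\[
\int_{\Theta_1 r}^{\Theta_2 r}\frac{d\hat r}{\hat r}=\log(\Theta_2/\Theta_1),
\]
which is positive and independent of $r$. The two-sided estimate then follows with constants depending only on $\Theta_1,\Theta_2,s,p,\lambda,c_D$.

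No step is a genuine obstacle. The only point requiring care is that reverse doubling is not needed, and that the diameter restriction plays no role, since only doubling is used.
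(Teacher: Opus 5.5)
Your proof is correct and is essentially the standard argument. The paper gives no proof and simply cites the lemma from \cite{BB}, where it follows directly from doubling: $\hat r\simeq r$ and $\mu(B_{\hat r})\simeq\mu(B_r)$ uniformly for $\hat r\in[\Theta_1 r,\Theta_2 r]$, and integrating against $d\hat r/\hat r$ then only contributes the factor $\log(\Theta_2/\Theta_1)$, exactly as in your Steps 1 and 2.
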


\begin{lemma}\cite{BB}\label{Lip func}
   Let $0<s<1$ and $0<q<\infty$. Let $0\leq u\leq1$ be an $L$-Lipschitz function and $\mu$ be a doubling measure on $X$. Fix $x\in X$. Then we have 
  \begin{equation}
     \int_X\frac{|u(x)-u(y)|^q}{d(x,y)^{sq}\mu(B(x,d(x,y)))} d\mu(y)\lesssim L^{sq},
  \end{equation}
  where the comparison constant depends on $s$ and $q$.
\end{lemma}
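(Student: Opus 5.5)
The plan is to split the left-hand integral at distance $1/L$ from $x$. Since $0\le u\le 1$, we have $|u(x)-u(y)|\le \min\{1, L\,d(x,y)\}$, and this splits the integral over $X$ into the ball $B(x,1/L)$ and its complement:
\[
\int_{B(x,1/L)}\frac{L^q d(x,y)^{q(1-s)}}{\mu(B(x,d(x,y)))}\,d\mu(y)+\int_{X\setminus B(x,1/L)}\frac{d\mu(y)}{d(x,y)^{sq}\mu(B(x,d(x,y)))}.
\]
Each piece will be estimated by a dyadic annular decomposition centred at $x$, with $A_j=B(x,2^{j+1}/L)\setminus B(x,2^{j}/L)$ for $j\in\Z$. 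On $A_j$ we have $d(x,y)\simeq 2^j/L$ and $\mu(B(x,d(x,y)))\ge \mu(B(x,2^j/L))$. By doubling, $\mu(A_j)\le \mu(B(x,2^{j+1}/L))\le c_D\,\mu(B(x,2^j/L))$, so the ratio $\mu(A_j)/\mu(B(x,d(x,y)))$ is bounded by $c_D$ uniformly on $A_j$.

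\emph{Near part} ($j\le -1$). Here $d(x,y)^{q(1-s)}\le (2^{j+1}/L)^{q(1-s)}$ on $A_j$, so the contribution of $A_j$ is at most
\[
c_D\,L^q\,(2^{j+1}/L)^{q(1-s)}=c_D\,2^{q(1-s)}\,L^{sq}\,2^{jq(1-s)}.
\]
Since $s<1$, the sum over $j\le -1$ is a convergent geometric series. It is bounded by $C(s,q,c_D)L^{sq}$, and reverse doubling is not needed here.

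\emph{Far part} ($j\ge 0$). Here $d(x,y)^{-sq}\le (2^j/L)^{-sq}$, so the contribution of $A_j$ is at most $c_D\,L^{sq}2^{-jsq}$. Since $s>0$, the sum over $j\ge0$ again converges geometrically and gives $C(s,q,c_D)L^{sq}$. If $X$ is bounded, the annuli eventually become empty, which only helps.

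The only points needing care are the following. The balls are open, so we take $A_j=\{2^j/L\le d(x,y)<2^{j+1}/L\}$; together with the point $y=x$, which is harmless, these annuli cover $X$. The comparison constant also depends on $c_D$, which the statement implicitly allows. I expect no real obstacle: the key step is simply the uniform bound $\mu(A_j)/\mu(B(x,d(x,y)))\le c_D$, after which both sums are geometric because $0<s<1$.
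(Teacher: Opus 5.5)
Your argument is correct: splitting at scale $1/L$ with $|u(x)-u(y)|\le\min\{1,L\,d(x,y)\}$ and using $\mu(A_j)\le c_D\,\mu(B(x,2^j/L))$ on the dyadic annuli gives two geometric series, which converge because $0<s<1$, with a constant depending on $s$, $q$ and $c_D$ (the case $L=0$ is trivial, since then $u$ is constant and the integrand vanishes). The paper gives no proof of its own and only cites \cite{BB}; your dyadic-annulus argument is essentially the standard proof behind that citation.
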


\begin{lemma}\cite[Section~1.8]{KuPe03}\label{discrete weighted Hardy}
    Let $1<p<\infty$ and let $p'=\frac{p}{p-1}$ be its conjugate exponent. Consider positive  weight sequences  $\{u_n\}_{n\in\N}$, $\{v_n\}_{n\in\N}$ such that
    \begin{equation}\label{eq0_3.9}
       A:= \sup_{k>0}\left(\sum_{n=k}^{\infty} u_n\right)^\frac{1}{p}\left(\sum_{n=0}^{k} v_n^{1-p'}\right)^{1/p'}<\infty.
    \end{equation}
    Then, for any non-negative sequence $\{a_n\}_{n\in\N}$, the following inequality holds:
    \begin{equation}
       \left(\sum_{n=1}^{\infty}u_n\left(\sum_{k=1}^{n}a_k\right)^p\right)^{1/p}\leq C_p A\left(\sum_{n=1}^{\infty}v_na_n^p\right)^{1/p},
    \end{equation}
    where $C_p>0$ is a constant depending only on $p$.
\end{lemma}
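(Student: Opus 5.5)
The statement is the discrete weighted Hardy inequality. I would prove it by the classical Hölder-with-auxiliary-weights argument, which yields $C_p$ depending only on $p$. Set
\[
V_k=\sum_{n=0}^{k} v_n^{1-p'},\qquad U_k=\sum_{n=k}^\infty u_n,
\]
so $V_k$ is increasing, $U_k$ is decreasing, and the hypothesis \eqref{eq0_3.9} reads $U_k\le A^pV_k^{-(p-1)}$ for all $k$ (using $p/p'=p-1$). By monotone convergence we may assume the $a_n$ are finitely supported.

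\textbf{Step 1 (Hölder with a balancing weight).} Fix $\gamma=\frac{p-1}{2}$ and write $a_k=\bigl(a_kv_k^{1/p}V_k^{\gamma/p}\bigr)\bigl(v_k^{-1/p}V_k^{-\gamma/p}\bigr)$. Hölder's inequality with exponents $p,p'$ gives
\[
\Bigl(\sum_{k=1}^n a_k\Bigr)^p\le\Bigl(\sum_{k=1}^n a_k^pv_kV_k^{\gamma}\Bigr)\Bigl(\sum_{k=1}^n v_k^{1-p'}V_k^{-\gamma p'/p}\Bigr)^{p-1}.
\]
Here $\gamma p'/p=\tfrac12$. Since $v_k^{1-p'}=V_k-V_{k-1}$ and $t\mapsto t^{-1/2}$ is decreasing, the second sum is at most $\int_0^{V_n}t^{-1/2}\,dt=2V_n^{1/2}$. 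Hence the second factor is $\lesssim V_n^{\gamma}$.

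\textbf{Step 2 (interchange of summation).} Multiply by $u_n$, sum over $n$, and use Tonelli:
\[
\sum_n u_n\Bigl(\sum_{k\le n}a_k\Bigr)^p\lesssim\sum_k a_k^pv_kV_k^{\gamma}\sum_{n\ge k}u_nV_n^{\gamma}.
\]
It therefore suffices to prove the tail estimate $\sum_{n\ge k}u_nV_n^{\gamma}\lesssim A^pV_k^{-\gamma}$. Once this holds, the powers of $V_k$ cancel and the claimed bound with $C_p^p\approx$ const follows.

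\textbf{Step 3 (tail estimate, the main obstacle).} I expect this step to be the only real difficulty. I would use summation by parts with $u_n=U_n-U_{n+1}$:
\[
\sum_{n\ge k}u_nV_n^\gamma\le U_kV_k^\gamma+\sum_{n>k}U_n\bigl(V_n^\gamma-V_{n-1}^\gamma\bigr)\le A^p\Bigl(V_k^{-\gamma}+\sum_{n>k}V_n^{-2\gamma}\bigl(V_n^\gamma-V_{n-1}^\gamma\bigr)\Bigr).
\]
The boundary term $U_NV_N^\gamma\le A^pV_N^{-\gamma}$ stays bounded, and it vanishes in the limit if $V_N\to\infty$; otherwise it is simply absorbed. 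Substituting $\tau=t^\gamma$, the last sum is a lower Riemann sum for $\int_{V_k^\gamma}^\infty\tau^{-2}\,d\tau=V_k^{-\gamma}$, because $\tau^{-2}$ is decreasing and is evaluated at right endpoints. This gives the tail bound with a constant depending only on $p$.

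The care needed in this step concerns the index convention ($A$ sums from $0$, while the inequality sums from $1$; this is harmless since the extra term only enlarges $V_k$) and the possibility that $V$ stays bounded, which the estimate above handles.
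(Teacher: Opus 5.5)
The paper gives no proof of this lemma; it is imported from the monograph of Kufner and Persson. Your proposal is a correct, self-contained proof, and every step checks. It is the classical Tomaselli--Talenti--Muckenhoupt argument (H\"older against a power of $V_k$, then Tonelli, then a tail estimate) written for sequences.

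Some details worth confirming:
In Step~1, the bound $\sum_{k=1}^{n}(V_k-V_{k-1})V_k^{-1/2}\le\int_0^{V_n}t^{-1/2}\,dt$ holds whether or not the index $0$ is present. In Step~3, the hypothesis is used only at indices $\ge 1$, where it is available. It also forces $U_n<\infty$ there, so writing $u_n=U_n-U_{n+1}$ is legitimate.

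One point of presentation: in the Abel summation the boundary term is $-U_{N+1}V_N^{\gamma}$. It carries a minus sign and can simply be discarded, so there is nothing to discuss about whether $V_N\to\infty$. The reduction to finitely supported $a_n$ is also unnecessary, since Tonelli handles nonnegative series directly.

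Tracking constants, Step~1 contributes $2^{p-1}$ and Step~3 contributes $2A^p$. You therefore obtain the lemma with $C_p=2$ for every $p\in(1,\infty)$.

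Compared with the citation, your route gives an explicit, $p$-independent constant and removes the external dependence. Citing the literature is shorter and places the lemma within the full characterization (necessity of $A<\infty$, sharp constants), none of which the paper needs.

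If you want the classical constant, use the weight $V_k^{1/p'}$ instead of $V_k^{(p-1)/2}$ in the H\"older step. Then:
Step~1 yields the factor $(p')^{p-1}V_n^{(p-1)/p'}$.
The same Abel-summation and integral-comparison argument gives $\sum_{n\ge k}u_nV_n^{(p-1)/p'}\le pA^pV_k^{-1/p'}$.
The result is $C_p=p^{1/p}(p')^{1/p'}$, which is at most $\frac1p\cdot p+\frac1{p'}\cdot p'=2$ by the weighted AM--GM inequality.

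Your symmetric choice $\gamma=(p-1)/2$ is exactly the one making the two powers of $V$ coincide. That is why it is the simplest to write but optimal only at $p=2$.
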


\section{Sharp capacity estimates for fat annuli}\label{sharp capacity}

In this section, we give the sharp two-sided bounds for the capacity of fat annuli. We first establish the lower bound and then obtain the corresponding upper bound. Combining these estimates yields the sharp characterization in Theorem~\ref{sharp capacity estimate annuli}.

\subsection{Lower bound for capacity}\label{LB esti}
We establish a lower bound for the fractional Sobolev capacity. In contrast with the local $p$-capacity, where lower bounds are typically
obtained under the assumption that $X$ supports a $p$-Poincar\'e inequality,
no such assumption is needed here. This is because the fractional norm depends on estimating pointwise differences of the function. Consequently, lower bounds for the fractional capacity can be obtained directly from the global doubling and reverse-doubling properties of the measure.

\begin{lemma}\label{UB capacity_ball}
Let $s\in(0,1)$, $p\in(0,\infty)$ and $q\in(0,\infty]$. Then for $0<2r\leq R<\frac{1}{4}\diam(X)$  we have
\begin{equation}\label{2.2.2}
\mathrm{cap}_{s,p,q}(B_r,B_R)
\gtrsim
\left(\int_{r}^{R}\left(\frac{\hat{r}^{sp}}{\mu(B_{\hat{r}})}\right)^{\frac{1}{p-1}}\frac{d\hat{r}}{\hat{r}}\right)^{1-p},\text{   }p>1
\end{equation}
and 
\begin{equation}\label{2.3.2}
\mathrm{cap}_{s,p,q}(B_r,B_R)
\gtrsim
\inf_{r\le \hat{r}<2\hat{r}\le R}\left(\frac{\hat{r}^{sp}}{\mu(B_{\hat{r}})}\right)^{-1 },\text{   }p\le1,
\end{equation}
where the comparison constant depends on $s$, $p$, and $C(c_{RD},c_D)^{1/q}$. 
\end{lemma}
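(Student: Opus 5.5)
Fix an admissible $u$ for $\mathrm{cap}_{s,p,q}(B_r,B_R)$ with finite energy. If none exists, the capacity is $\infty$ and there is nothing to prove. The plan is a dyadic telescoping argument with medians. Let $k\ge1$ be the integer with $2^k r\le R<2^{k+1}r$, and set $r_j=2^j r$ for $j=0,\dots,k$. Use the annuli $A_j=B_{r_{j+1}}\setminus B_{r_j}$ together with the inner ball $A_{-1}=B_r$ and the exterior annulus $A_k'=B_{4R}\setminus B_{2R}$. Since $R<\tfrac14\diam(X)$, reverse doubling gives $\mu(A_j)\approx\mu(B_{r_{j+1}})$ and $\mu(A_k')\approx\mu(B_R)>0$. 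Because $u=1$ on $B_r$ and $u=0$ outside $B_R$, we have $m_u(B_r)=1$ and $m_u(A_k')=0$. Telescoping through the chain $B_r, A_0, A_1,\dots$, ending with the exterior annulus, we get
\[
1\le \sum_{j} |m_u(E_j)-m_u(E_{j+1})|,
\]
where $E_j,E_{j+1}$ are consecutive sets. The last link can be taken as $A_{k}$ (or $A_{k-1}$) against $A_k'$; these are at comparable scale $\approx R$.

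Next we apply Lemma~\ref{median prps}(iii) to each consecutive pair $(E_j,E_{j+1})$. For $x\in E_j$ and $y\in E_{j+1}$ we have $d(x,y)\lesssim r_j$, and $\mu(B(x,d(x,y)))\lesssim\mu(B_{r_j})$ by doubling. The integrand with respect to $y$ over $E_{j+1}$ then dominates, up to these constants, the averaged integral. This yields
\[
|m_u(E_j)-m_u(E_{j+1})|^p\lesssim \frac{r_j^{sp}}{\mu(B_{r_j})}\int_{E_j}\Big(\int_{X}\frac{|u(x)-u(y)|^q}{d(x,y)^{sq}\mu(B(x,d(x,y)))}d\mu(y)\Big)^{p/q}d\mu(x)=:\frac{r_j^{sp}}{\mu(B_{r_j})}\,I_j .
\]
To get this, write the average over $E_j$ as $\mu(E_j)^{-1}\int_{E_j}$. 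Bound the inner average by $\mu(E_{j+1})^{-p/q}$ times the integral over $E_{j+1}$. Then use $\mu(E_{j+1})\approx\mu(B_{r_j})$ to remove the averaging factor hidden in $\mu(B(x,d(x,y)))$, up to constants. The constants here depend on $c_D,c_{RD}$, with the power $1/q$ coming from $(\cdot)^{p/q}$. I expect this bookkeeping, which converts averages into the unnormalized energy uniformly in $q$ (including $q=\infty$), to be the main technical step. The sets $E_j$ are pairwise disjoint, so $\sum_j I_j\le [u]^p_{W^s_{p,q}(X)}$.

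Now set $w_j=r_j^{sp}/\mu(B_{r_j})$, so that $1\le\sum_j w_j^{1/p}I_j^{1/p}$. For $p>1$, Hölder's inequality with exponents $p,p'$ gives
\[
1\le\Big(\sum_j w_j^{p'/p}\Big)^{1/p'}\Big(\sum_j I_j\Big)^{1/p},
\]
and hence $[u]^p\gtrsim(\sum_j w_j^{1/(p-1)})^{1-p}$. By Lemma~\ref{Lemma 8.1} with $\lambda=1/(p-1)$, each term $w_j^{1/(p-1)}$ is comparable to the integral over $[r_j,r_{j+1}]$. The sum is therefore comparable to $\int_r^R(\cdot)^{1/(p-1)}d\hat r/\hat r$, where the top scale is harmless since $r_k\approx R$. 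For $p\le1$, inequality \eqref{ineq-p<1} gives
\[
1\le\Big(\sum_j w_jI_j\Big)^{1/p}\le \big(\sup_j w_j\big)^{1/p}\Big(\sum_j I_j\Big)^{1/p},
\]
so $[u]^p\gtrsim(\sup_j w_j)^{-1}$. Finally, $w_j$ for $r\le r_j$, $2r_j\le R$ is comparable, by doubling, to values of $\hat r^{sp}/\mu(B_{\hat r})$ in the admissible range, which gives \eqref{2.3.2}. Taking the infimum over $u$ concludes the proof.
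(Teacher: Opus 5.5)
Your proposal is correct and follows essentially the same route as the paper. Both arguments telescope medians over dyadic annuli and apply Lemma~\ref{median prps}(iii) to each link, converting it to the weighted energy via doubling and reverse doubling; they then use H\"older for $p>1$ and \eqref{ineq-p<1} for $p\le1$, and pass to the integral with Lemma~\ref{Lemma 8.1}. The only cosmetic difference is at the end of the chain: you close it at the exterior annulus $B_{4R}\setminus B_{2R}$, where $m_u=0$. The paper instead works with $\mathrm{cap}_{s,p,q}(B_r,B_{i_0+1})$ plus monotonicity, stops at $A_{i_0+1}$, and bounds $|m_u(A_{i_0+1})|$ separately through the auxiliary function $\psi$ and median properties (i)--(ii).
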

\begin{proof}
 Let $i_0\ge 1$ be a smallest integer such that $2^{i_0}r< R \le 2^{i_0+1}r$. For $i\in\{0,1,\dots,i_0+2\}$, set $B_i=B(O,2^ir)$, $A_0=B_0$ and
$A_i=B_i\setminus B_{i-1}$ for $i\ge1$.
If $\mathrm{cap}_{s,p,q}(B_r,B_{i_0+1})=\infty$, then \eqref{2.2.2} and \eqref{2.3.2}
hold. Hence, it suffices to consider the case $\mathrm{cap}_{s,p,q}(B_r,B_{i_0+1})<\infty$. 

Let $u$ be an admissible function for $\mathrm{cap}_{s,p,q}(B_r,B_{i_0+1})$. Then, $u=1$ on $B_r$ and $u=0$ in $B_{i_0+1}^c$. 
By Lemma~\ref{median prps}, we have
\begin{align}\label{eq1_24.6}
1\le|m_{{u}}(A_0)|\le& |m_{{u}}(A_0)-m_{{u}}(A_{i_0+1})|+|m_{{u}}(A_{i_0+1})|\\
\le  &  \sum_{i=0}^{i_0}|m_{{u}}(A_{i})-m_{{u}}(A_{i+1})|+|m_{{u}}(A_{i_0+1})|\\
\lesssim  & \sum_{i=0}^{i_0}\left(\intav_{A_i}\left(\intav_{A_{i+1}}|u(x)-u(y)|^q\,d\mu(y)\right)^\frac{p}{q}\,d\mu(x)\right)^\frac{1}{p}+|m_{{u}}(A_{i_0+1})|\\
 \lesssim &  \sum_{i=0}^{i_0}\frac{(2^{i}r)^s}{\mu(B_i)^\frac{1}{p}}\left(\int_{A_i}\left(\int_{A_{i+1}}\frac{|u(x)-u(y)|^q}{d(x,y)^{sq}\mu(B(x,d(x,y)))}\,d\mu(y)\right)^\frac{p}{q}\,d\mu(x)\right)^\frac{1}{p}+|m_{{u}}(A_{i_0+1})|.
\end{align}
Now we estimate $|m_{{u}}(A_{i_0+1})|$ using the properties of the median in Lemma~\ref{median prps}. Let us first fix $x\in B(O,2^{i_0+1}r)$ and $y\in A_{i_0+2}$. Then
\begin{equation}\begin{split}
    |u(x)|&=|u(x)-u(y)|\\
    &\le\left(\intav_{A_{i_0+2}} |u(x)-u(y)|^q\,d\mu(y)\right)^\frac{1}{q}\\
     &\lesssim (2^{i_0}r)^s\left(\int_{A_{i_0+2}}\frac{ |u(x)-u(y)|^q}{d(x,y)^{sq}\mu(B(x,d(x,y)))}\,d\mu(y)\right)^\frac{1}{q}.
\end{split}
\end{equation}
Set
\begin{equation}\label{eq0_8.9}
    \psi(x)=(2^{i_0}r)^s\left(\int_{A_{i_0+2}}\frac{ |u(x)-u(y)|^q}{d(x,y)^{sq}\mu(B(x,d(x,y)))}\,d\mu(y)\right)^\frac{1}{q}.
\end{equation}
Since $|u(x)|\le \psi(x)$ for all $x$, Lemma~\ref{median prps} gives
\begin{align}\label{eq1_5.9}
    |m_{{u}}(A_{i_0+1})|&\le |m_{{\psi}}(A_{i_0+1})|\\
    &\le\left(\intav_{A_{i_0+1}}\psi^p(x)\,d\mu(x)\right)^\frac{1}{p}\\
    &\lesssim\frac{(2^{i_0}r)^s}{\mu(B_{i_0})^\frac{1}{p}}\left(\int_{A_{i_0+1}}\left(\int_{A_{i_0+2}}\frac{ |u(x)-u(y)|^q}{d(x,y)^{sq}\mu(B(x,d(x,y)))}\,d\mu(y)\right)^\frac{p}{q}\,d\mu(x)\right)^\frac{1}{p}.
\end{align}
By equations \eqref{eq1_24.6}, \eqref{eq1_5.9} and H\"older's inequality for $p>1$, we have
 \begin{align}\label{eq1_25.6}
1&\lesssim \left(\sum_{i=0}^{i_0}\left(\frac{(2^{i}r)^{sp}}{\mu(B_i)}\right)^{\frac{1}{p-1}}\right)^\frac{p-1}{p}\left(\sum_{i=0}^{\infty}\int_{A_i}\left(\int_{A_{i+1}}\frac{|u(x)-u(y)|^q}{d(x,y)^{sq}\mu(B(x,d(x,y)))}\,d\mu(y)\right)^\frac{p}{q}\,d\mu(x)\right)^\frac{1}{p}\\
&\leq\left(\sum_{i=0}^{i_0}\left(\frac{(2^{i}r)^{sp}}{\mu(B_i)}\right)^{\frac{1}{p-1}}\right)^\frac{p-1}{p}\left(\int_{X}\left(\int_{X}\frac{|u(x)-u(y)|^q}{d(x,y)^{sq}\mu(B(x,d(x,y)))}\,d\mu(y)\right)^\frac{p}{q}\,d\mu(x)\right)^\frac{1}{p}.
 \end{align}
 Therefore, by Lemma~\ref{Lemma 8.1} we have
 \begin{align}
\left(\sum_{i=0}^{i_0-1}\int_{2^{i}r}^{2^{i+1}r}\left(\frac{\hat{r}^{sp}}{\mu(B_{\hat{r}})}\right)^{\frac{1}{p-1}}\frac{d\hat{r}}{\hat{r}}\right)^{1-p } 
\lesssim \int_{X}\left(\int_{X}\frac{|u(x)-u(y)|^q}{d(x,y)^{sq}\mu(B(x,d(x,y)))}\,d\mu(y)\right)^{\frac{p}{q}}\,d\mu(x).
 \end{align}
Similarly for $p\leq 1$ and $\psi$ as in \eqref{eq0_8.9}, using \eqref{ineq-p<1} and the Lemma~\ref{median prps}, we obtain
\begin{align}\label{eq1_2.7}
1\le|m_{{u}}(A_0)|^p\le& |m_{{u}}(A_0)-m_{{u}}(A_{i_0+1})|^p+|m_{{u}}(A_{i_0+1})|^p\\
\le  &  \sum_{i=0}^{i_0}|m_{u}(A_{i})-m_{u}(A_{i+1})|^p+|m_{{\psi}}(A_{i_0+1})|^p\\
\lesssim & \sum_{i=0}^{i_0}\frac{(2^{i}r)^{sp}}{\mu(B_i)}\int_{A_i}\left(\int_{A_{i+1}}\frac{|u(x)-u(y)|^q}{d(x,y)^{sq}\mu(B(x,d(x,y)))}\,d\mu(y)\right)^\frac{p}{q}\,d\mu(x)\\
\lesssim & \sum_{i=0}^{i_0-1}\sup_{2^{i}r\le \hat{r}\le 2^{i+1}r}\frac{\hat{r}^{sp}}{\mu(B_{\hat{r}})}\int_{A_i}\left(\int_{A_{i+1}}\frac{|u(x)-u(y)|^q}{d(x,y)^{sq}\mu(B(x,d(x,y)))}\,d\mu(y)\right)^\frac{p}{q}\,d\mu(x)\\
\lesssim & \sup_{r\le \hat{r}<2\hat{r}\le R}\frac{\hat{r}^{sp}}{\mu(B_{\hat{r}})}\int_{X}\left(\int_{X}\frac{|u(x)-u(y)|^q}{d(x,y)^{sq}\mu(B(x,d(x,y)))}\,d\mu(y)\right)^\frac{p}{q}\,d\mu(x).
\end{align}
 Then, we have 
 \begin{align}
\inf_{r\le \hat{r}<2\hat{r}\le R}\frac{\mu(B_{\hat{r}})}{\hat{r}^{sp}}
\lesssim
\int_{X}\left(\int_{X}\frac{|u(x)-u(y)|^q}{d(x,y)^{sq}\mu(B(x,d(x,y)))}\,d\mu(y)\right)^\frac{p}{q}\,d\mu(x).
 \end{align}
Now, since $u$ is admissible for $\mathrm{cap}_{s,p,q}(B_r,B_{i_0+1})$ and $R\le 2^{i_0+1}r$, the claim follows by monotonicity.
\end{proof}

\begin{remark}
   Note that some restriction of the type $R<\frac{1}{4}\diam(X)$ cannot be removed. For example, if $X=[-R,R]$, then the constant function 1 is an admissible function for $\mathrm{cap}_{s,p}(B_r,B_R)$ and thus $\mathrm{cap}_{s,p}(B_r,B_R)=0$. 
\end{remark}

\begin{corollary}\label{lem0_4.9.26}
   Let $s\in(0,1)$, $p\in(0,\infty)$ and $q\in(0,\infty]$.   Then for $0<2r<\frac{1}{4}\diam(X)$,  we have
\begin{equation}\label{eq0_7.9}
\mathrm{cap}_{s,p,q}(B_r,B_{2r})
\gtrsim
\frac{\mu(B_r)}{r^{sp}}.
\end{equation}
\end{corollary}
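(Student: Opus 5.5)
This is the special case $R=2r$ of Lemma~\ref{UB capacity_ball}, so I would obtain it by specializing that lemma; at worst its proof needs one trivial modification. The hypothesis $0<2r<\frac14\diam(X)$ is exactly the condition $0<2r\le R<\frac14\diam(X)$ with $R=2r$, so Lemma~\ref{UB capacity_ball} applies.

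For $p>1$, estimate \eqref{2.2.2} gives
\[
\mathrm{cap}_{s,p,q}(B_r,B_{2r})\gtrsim\left(\int_r^{2r}\left(\frac{\hat r^{sp}}{\mu(B_{\hat r})}\right)^{\frac{1}{p-1}}\frac{d\hat r}{\hat r}\right)^{1-p}.
\]
By Lemma~\ref{Lemma 8.1} with $\Theta_1=1$, $\Theta_2=2$ and $\lambda=\frac1{p-1}$, the integral is comparable to $(r^{sp}/\mu(B_r))^{1/(p-1)}$. Raising to the power $1-p$ yields $\mu(B_r)/r^{sp}$, as claimed.

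For $p\le1$, the infimum in \eqref{2.3.2} with $R=2r$ runs over $\hat r$ satisfying $r\le\hat r$ and $2\hat r\le 2r$, so only $\hat r=r$ is allowed. It therefore equals $\mu(B_r)/r^{sp}$.

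The only point needing care is that in the proof of Lemma~\ref{UB capacity_ball}, $i_0$ is chosen as the smallest integer $\ge1$ with $2^{i_0}r<R\le2^{i_0+1}r$, and when $R=2r$ no $i_0\ge1$ exists. To handle this I would rerun the median argument directly with $A_0=B_r$, $A_1=B_{2r}\setminus B_r$ and $A_2=B_{4r}\setminus B_{2r}$:
\[
1\le |m_u(A_0)-m_u(A_1)|+|m_u(A_1)|.
\]
The first term is controlled by Lemma~\ref{median prps}(iii), using that $d(x,y)\lesssim r$ and $\mu(B(x,d(x,y)))\lesssim\mu(B_r)$ for $x\in A_0$, $y\in A_1$. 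The second term is controlled via $\psi$ as in \eqref{eq0_8.9}, built from $A_2$, which is legitimate since $u=0$ on $A_2$. The sets $A_1$ and $A_2$ have measure comparable to $\mu(B_r)$ by reverse doubling, and this is where $4r<\diam X$ is used.

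Each term is then bounded by $(r^{sp}/\mu(B_r))^{1/p}[u]_{W^s_{p,q}(X)}$. Hence
\[
1\lesssim \frac{r^{sp}}{\mu(B_r)}\,[u]^p_{W^s_{p,q}(X)}
\]
for every admissible $u$, and taking the infimum over admissible $u$ gives \eqref{eq0_7.9}. I expect no real obstacle beyond this bookkeeping of the index range.
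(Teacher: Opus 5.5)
Your proposal is correct and follows the paper, which states this corollary without proof as the case $R=2r$ of Lemma~\ref{UB capacity_ball}. You are right that the lemma's proof as written requires $i_0\ge1$ and so excludes $R=2r$. Your three-annuli median argument with $A_0=B_r$, $A_1=B_{2r}\setminus B_r$ and $A_2=B_{4r}\setminus B_{2r}$ is exactly that proof run with $i_0=0$, and it closes this gap.
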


\subsection{Upper bound for capacity}\label{UB estimate}
  Next, we establish the upper bound for the fractional capacity of fat annuli. Unlike the lower bound, the upper bound requires a controlled volume growth on the measure when $p<q$. This condition is needed to control the geometry at infinity and is automatically satisfied when $X$ is bounded.
  
  In particular, let $X$ be an unbounded metric space, and let $0<s<1$, $0<p< q\le\infty$. We consider the following \emph{dyadic condition}:
\begin{equation}\label{Rs,p,q finiteness}
     D_{s,p,q}(\mu)= \sum_{j=1}^\infty \frac{\mu(B(O,2^j))^{1-\frac{p}{q}}}{2^{jsp}}<\infty.\tag{D1}
\end{equation}
 This growth constraint is necessary as demonstrated in Lemma~\ref{necessary condition} below.
\begin{remark}
     Notice that for $p\ge q$, the condition above is always true by the reverse doubling property of $\mu$ \eqref{iteration reverse}, since $s>0\geq\frac{\sigma}{p}-\frac{\sigma}{q}$. The following argument shows that if $D_{s,p,q}(\mu)=\infty$, then there are no admissible functions for $\mathrm{cap}_{s,p,q}(E,\Omega)$ that belong to $\dot{W}^s_{p,q}$, and hence $\mathrm{cap}_{s,p,q}(E,\Omega)=\infty$.
 \end{remark}
\begin{lemma}\label{necessary condition}
    Let $0<s<1$, $0<p< q\le\infty$ and suppose that $ D_{s,p,q}(\mu)= \infty$. Let \(X\) be an unbounded metric space, and let $\Omega\subset X$ be a non-empty bounded open set. If $u$ is a non-trivial bounded measurable function such that $u=0$ on $\Omega^c$, then $u\notin \dot{W}^s_{p,q}(X)$.
\end{lemma}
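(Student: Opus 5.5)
We show that a non-trivial $u$ vanishing off $\Omega$ has infinite energy once $D_{s,p,q}(\mu)=\infty$. The idea is to look only at pairs $(x,y)$ with $y$ in a set where $u$ is non-zero and $x$ far away, in $\Omega^c$.

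\textbf{Setup.} Since $u$ is non-trivial, there exist $\delta>0$ and a set $E\subset\Omega$ with $\mu(E)>0$ and $|u|\ge\delta$ on $E$. Fix $R_0\ge 1$ with $\Omega\subset B(O,R_0)$. Let $j_0$ be such that $2^{j_0}\ge 4R_0$. Since $X$ is unbounded, reverse doubling holds at every scale, so the annuli $A_j=B(O,2^{j+1})\setminus B(O,2^j)$ satisfy $\mu(A_j)\approx\mu(B(O,2^{j+1}))$ for all $j\ge j_0$.

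\textbf{Pointwise bound on far annuli.} Take $x\in A_j$ with $j\ge j_0$. Then $u(x)=0$, and every $y\in E$ satisfies $d(x,y)\approx 2^j$. By doubling, $\mu(B(x,d(x,y)))\approx\mu(B(O,2^j))$. Hence
\[
\int_X\frac{|u(x)-u(y)|^q}{d(x,y)^{sq}\mu(B(x,d(x,y)))}\,d\mu(y)\ \ge\ \int_E\frac{\delta^q}{d(x,y)^{sq}\mu(B(x,d(x,y)))}\,d\mu(y)\ \gtrsim\ \frac{\delta^q\,\mu(E)}{2^{jsq}\,\mu(B(O,2^j))}.
\]
For $q=\infty$ we instead bound the essential supremum over $y\in E$ below by $\delta\,2^{-js}\mu(B(O,2^j))^{-1/q}$ with $1/q=0$. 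This gives $\delta\,2^{-js}$, and the computation below goes through with $p/q=0$.

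\textbf{Summing over annuli.} Raise the bound to the power $p/q$ and integrate over $A_j$:
\[
[u]^p_{W^s_{p,q}(X)}\ \ge\ \sum_{j\ge j_0}\int_{A_j}\Big(\cdots\Big)^{p/q}d\mu(x)\ \gtrsim\ (\delta^q\mu(E))^{p/q}\sum_{j\ge j_0}\frac{\mu(A_j)}{2^{jsp}\,\mu(B(O,2^j))^{p/q}}.
\]
Since $\mu(A_j)\approx\mu(B(O,2^j))$, the sum is comparable to $\sum_{j\ge j_0}\mu(B(O,2^j))^{1-p/q}2^{-jsp}$. This is the tail of $D_{s,p,q}(\mu)$. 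The finitely many terms with $j<j_0$ are finite because $\mu$ is finite on balls. So the tail is infinite, hence $[u]_{W^s_{p,q}(X)}=\infty$ and $u\notin\dot W^s_{p,q}(X)$.

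\textbf{Main obstacle.} The only delicate step is the uniform comparison $\mu(B(x,d(x,y)))\approx\mu(B(O,2^j))$ for $x\in A_j$ and $y\in E$. It follows from $2^{j-1}\le d(x,y)\le 2^{j+2}$ together with doubling, since each of the two balls is contained in a bounded dilate of the other. The constants depend only on $c_D$, $c_{RD}$, $s$, $p$ and $q$.

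Boundedness of $u$ is not needed in this argument, only that $u$ is non-trivial and vanishes outside $\Omega$.
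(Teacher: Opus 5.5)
Your proof is correct and follows essentially the same route as the paper: restrict $x$ to far dyadic annuli and $y$ to where $u\neq 0$, use doubling to compare $d(x,y)$ and $\mu(B(x,d(x,y)))$ with $2^j$ and $\mu(B(O,2^j))$, and use reverse doubling to get $\mu(A_j)\approx\mu(B(O,2^j))$, which yields the divergent tail of $D_{s,p,q}(\mu)$. Working with a level set $E=\{|u|\ge\delta\}$ gives the clean constant $(\delta^q\mu(E))^{p/q}$ in place of the paper's factor $\mu(K)^{p/q}$ with $K=\mathrm{supp}(u)$. You are also right that boundedness of $u$ is never used.
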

\begin{proof}
Let  $u$ be a non-trivial bounded measurable function such that $u=0$ on $\Omega^c$. Set $K=\mathrm{supp}(u)$. Since $\Omega$ is bounded, there exists an integer $j_0\in\N$ such  that $K\subset B(O,2^{j_0})$ and $u=0$ on $B(O,2^{j_0})^c$. Fix $j_1\in\N$ such that $j_1>j_0$. \\
Then
\begin{equation}
    \begin{split}
\int_X&\left(\int_{X}\frac{|u(x)-u(y)|^q}{d(x,y)^{sq}\mu(B(x,d(x,y)))}\,d\mu(y)\right)^\frac{p}{q}\,d\mu(x)\\  
&\qquad\qquad\qquad\geq\int_{B(O,2^{j_1})^c}\left(\int_{K}\frac{|u(y)|^q}{d(x,y)^{sq}\mu(B(x,d(x,y)))}\,d\mu(y)\right)^\frac{p}{q}\,d\mu(x).
\end{split}
\end{equation}
Now, since $d(O,y)\le d(O,x)$ gives $d(x,y)\leq d(O,x)+d(O,y)\le2d(O,x)$, it follows from the above inequality that
\begin{equation}\label{m2}
    \begin{split}
\int_X&\left(\int_{X}\frac{|u(x)-u(y)|^q}{d(x,y)^{sq}\mu(B(x,d(x,y)))}\,d\mu(y)\right)^\frac{p}{q}\,d\mu(x)\\  
&\qquad\qquad\qquad
\gtrsim\int_{B(O,2^{j_1})^c}\left(\int_{K}\frac{|u(y)|^q}{d(O,x)^{sq}\mu(B(x,2d(O,x)))}\,d\mu(y)\right)^\frac{p}{q}\,d\mu(x).
\end{split}
\end{equation}
   Now since $d(O,x)<2d(O,x)$ we have $O\in B(x,2d(O,x))$ and hence by using \eqref{iteration doubling} we get
                    \begin{equation}\label{m0}
                        \frac{\mu(B(O,d(O,x)))}{\mu(B(x,2d(O,x)))}\geq c_Q\left(\frac{1}{2}\right)^Q.
                    \end{equation}
Therefore, by applying \eqref{m2} and \eqref{m0}, we get
               
\begin{equation}\label{m4}
    \begin{split}
\int_X&\left(\int_{X}\frac{|u(x)-u(y)|^q}{d(x,y)^{sq}\mu(B(x,d(x,y)))}\,d\mu(y)\right)^\frac{p}{q}\,d\mu(x)\\
&\qquad\qquad\gtrsim
\mu(K)^\frac{p}{q}\sum_{j\geq j_1}\int_{B(O,2^{j+1})\setminus B(O,2^j)}\frac{1}{d(O,x)^{sp}\mu(B(O,d(O,x))^\frac{p}{q}}\,d\mu(x)\\
&\qquad\qquad\gtrsim
\mu(K)^\frac{p}{q}\sum_{j\geq j_1}\frac{\mu(B(O,2^{j}))^{1-\frac{p}{q}}}{2^{jsp}}=\infty.
\end{split}
\end{equation}
   Hence, the claim follows.
\end{proof}

\begin{remark}
   Notice that when $X=\R^n$ and $\mu$ is a standard Lebesgue measure, the above lemma recovers \cite[Remark~3.8]{PrEe}. In fact, it was shown in \cite{AnPeKa} that $\dot{W}^s_{p,q}(\R^n)$ contains only constant functions when  $D_{s,p,q}(\mu)=\infty$.
\end{remark}

 We also require a stronger \emph{dyadic tail condition} on the measure when $p < q$. We impose this condition to ensure the resulting upper bound takes a specific form that matches our lower bound, establishing a sharp, two-sided capacitary inequality. 
If $D_{s,p,q}(\mu)<\infty$, a finite upper bound can still be obtained, but it may fail to match the lower bound. Our condition is therefore essential for obtaining a sharp bound for the fractional capacity. 
 
In particular, let $0<s<1$ and $0<p< q\le\infty$. Set $N=\left\lceil{\log_2(\mathrm{diam}(X))}\right\rceil$. We say that a measure $\mu$ satisfies the \emph{dyadic tail condition} if for all $r>0$ and $j_0\in\N$ $(j_0\le N)$, there exists a constant $C_{\mathrm{DT}}\ge1$ independent of $q$, $j_0$ and $r$ such that
     \begin{equation}\label{eq2_6.9}
     D_{s,p,q}(\mu,r,j_0)=  \sum_{j=j_0}^N \frac{\mu(B(O,2^jr))^{1-\frac{p}{q}}}{(2^jr)^{sp}}\le C_{\mathrm{DT}}^{1/q}\,\frac{\mu(B(O,2^{j_0}r))^{1-\frac{p}{q}}}{(2^{j_0}r)^{sp}},\tag{D2}
   \end{equation}
   where $C_{\mathrm{DT}}^{1/q}=1$ when $q=\infty$.
\begin{remark}
      Observe that when $p\ge q$, condition \eqref{eq2_6.9} follows from the reverse doubling property of $\mu$ \eqref{iteration reverse}. Note that if $s>\frac{Q}{p}-\frac{Q}{q}$, where $Q$ is as in \eqref{iteration doubling}, then \eqref{eq2_6.9} holds.
\end{remark}

The following lemma will be useful in the sequel.

\begin{lemma}\label{lem0_6.9}
    Let $0<s<1$ and $0<q\le p<\infty$. 
 Then, for $0<2r<\mathrm{diam}(X)$ we have
    \begin{equation}
        \mathrm{cap}_{s,p,q}(B_r,B_{2r})\lesssim\frac{\mu(B_r)}{r^{sp}},
    \end{equation}
    where $B_r, B_{2r}$ the comparison constant depends on $s$, $p$, and $C(c_D,c_{RD},C_{DT})^{1/q}$.

    Moreover, the above estimate also hold for $0<p<q\le\infty$, provided that the measure $\mu$ satisfies the dyadic tail condition \eqref{eq2_6.9}.
\end{lemma}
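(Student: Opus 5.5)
We test the capacity with the standard Lipschitz cutoff
\[
u(x)=\min\Bigl\{1,\ \max\Bigl\{0,\ 2-\tfrac{d(O,x)}{r}\Bigr\}\Bigr\},
\]
which is $1/r$-Lipschitz. We have $u=1$ on $B_r$. We also have $u=0$ outside $B_{2r}$, and $0\le u\le 1$. To get $u=1$ on a neighbourhood of $B_r$, one may instead use the cutoff that equals $1$ on $B_{5r/4}$ and vanishes outside $B_{7r/4}$; it is $2/r$-Lipschitz, and this changes nothing below. We bound the energy
\[
\int_X\Bigl(\int_X\frac{|u(x)-u(y)|^q}{d(x,y)^{sq}\mu(B(x,d(x,y)))}\,d\mu(y)\Bigr)^{p/q}d\mu(x)
\]
by splitting the outer integral into $x\in B_{4r}$ and $x\in X\setminus B_{4r}$.

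\textbf{Near part.} For $x\in B_{4r}$, Lemma~\ref{Lip func} gives an inner integral $\lesssim r^{-sq}$. For $q=\infty$ we use directly $|u(x)-u(y)|/d(x,y)^s\le \min\{L\,d,1\}/d^s\lesssim r^{-s}$. Hence this part is $\lesssim r^{-sp}\mu(B_{4r})\lesssim \mu(B_r)/r^{sp}$ by doubling.

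\textbf{Far part.} For $x\notin B_{4r}$ we have $u(x)=0$, so only $y\in B_{2r}$ contributes. There $d(x,y)\approx d(O,x)$, and by doubling (as in the proof of Lemma~\ref{necessary condition}) $\mu(B(x,d(x,y)))\approx\mu(B(O,d(O,x)))$. Thus the inner integral is
\[
\lesssim \frac{\mu(B_{2r})}{d(O,x)^{sq}\mu(B(O,d(O,x)))}.
\]
Decompose $X\setminus B_{4r}$ into annuli $B(O,2^{j+1}r)\setminus B(O,2^jr)$ with $j\ge 2$. Only indices up to $N$ occur when $X$ is bounded. The far part is then
\[
\lesssim \mu(B_r)^{p/q}\sum_{j\ge 2}\frac{\mu(B(O,2^{j+1}r))^{1-p/q}}{(2^jr)^{sp}}.
\]

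\textbf{Summing the series.} When $q\le p$ the exponent $1-p/q\le 0$, and \eqref{iteration reverse} gives $\mu(B(O,2^jr))^{1-p/q}\lesssim 2^{-j\sigma(1-p/q)}\mu(B_r)^{1-p/q}$. The series is then geometric with ratio $2^{-sp-\sigma(1-p/q)}$, which is $<1$ since $1-p/q\le 0$ means the exponent is at least $sp>0$ after accounting signs. More simply, $\mu(B(O,2^jr))^{1-p/q}\le \mu(B_r)^{1-p/q}$ already suffices, because $\sum_j 2^{-jsp}<\infty$. This gives a total of $\lesssim \mu(B_r)/r^{sp}$. When $p<q$, the sum is exactly $D_{s,p,q}(\mu,r,j_0)$ up to doubling shifts. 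Condition \eqref{eq2_6.9} bounds it by $C_{DT}^{1/q}\mu(B(O,4r))^{1-p/q}/(4r)^{sp}$, and doubling then yields $\lesssim \mu(B_r)/r^{sp}$ again.

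The main obstacle is this far-range step for $p<q$. There the positive exponent $1-p/q$ makes the annular volumes grow, and the tail condition \eqref{eq2_6.9} is precisely what controls the sum by its first term. A secondary technical point is the comparison $\mu(B(x,d(x,y)))\gtrsim\mu(B(O,d(O,x)))$, which follows from \eqref{iteration doubling} since $O\in B(x,2d(O,x))$. We must also keep track of how the constants depend on $q$, through the factor $C^{1/q}$, including the case $q=\infty$, where the inner integral is replaced by an essential supremum and the same bounds hold verbatim.
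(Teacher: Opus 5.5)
Your proof is correct and follows the paper's argument. Both use a Lipschitz cutoff and Lemma~\ref{Lip func} for $x\in B_{4r}$, then a dyadic-annuli bound on $X\setminus B_{4r}$, summed with \eqref{eq2_6.9} when $p<q$ and directly when $q\le p$. Your cutoff is genuinely $1/r$-Lipschitz and admissible, which the formula printed in the paper is not (it jumps at $\partial B_r$ and is negative near $\partial B_{2r}$); for $q\le p$, your monotonicity bound $\mu(B(O,2^jr))^{1-p/q}\le\mu(B_r)^{1-p/q}$ is what closes the sum, and the preceding reverse-doubling sentence has a sign slip and should be dropped.
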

\begin{proof}
Define

\begin{equation}
        u(x)=\mathbf{1}_{B_r}+\left(1-\frac{2\mathrm{dist}(x,O)}{3r}\right)\mathbf{1}_{B_{2r}\setminus B_r}.
    \end{equation}
The function $u$ is $(2/3r)$-Lipschitz and admissible for $\mathrm{cap}_{s,p,q}(B_r,B_{2r})$.
 Then by Lemma~\ref{Lip func} we have
\[
\int_{B_{4r}}\left(\int_X
\frac{|u(x)-u(y)|^{q}}
     {d(x,y)^{s q}\mu(B(x,d(x,y)))}\,d\mu(y)\right)^\frac{p}{q}d\mu(x)\lesssim\frac{\mu(B_r)}{r^{sp}}.
\]
 Take $A_j=B(O,2^{j+1}r)\setminus B(O,2^jr)$ for all $j\in\N$. Then we have
\begin{align}
\int_{B_{4r}^c}&\left(\int_{B_{2r}}
\frac{|u(x)-u(y)|^{q}}
     {d(x,y)^{s q}\mu(B(x,d(x,y)))}\,d\mu(y)\right)^\frac{p}{q}d\mu(x)\\
     &\leq \sum_{j=2}^N \int_{A_j}\left(\int_{B_{2r}}
\frac{1}
     {d(x,y)^{s q}\mu(B(x,d(x,y)))}\,d\mu(y)\right)^\frac{p}{q}d\mu(x)\\
     &\lesssim \sum_{j=2}^N \int_{A_j}\left(
\frac{\mu(B_{2r})}{(2^jr)^{sq}\mu(B(x,2^jr))}\right)^\frac{p}{q}d\mu(x)\\
&\lesssim \sum_{j=2}^N \frac{\mu(B_{2r})^\frac{p}{q}\mu(B(O,2^{j}r))^{1-\frac{p}{q}}}{(2^jr)^{sp}}\\
&\lesssim \frac{\mu(B_{2r})}{r^{sp}},
\end{align}
where the last inequality follows from \eqref{eq2_6.9} when $p<q$, and from \eqref{iteration reverse} together with $s>0$ when $p\ge q$. Hence, the claim follows.
\end{proof}


\begin{corollary}\label{ps<1}
     Let $0<s<1$ and \(0<q\le p<\infty\).  Then for $0<2r\le R<\diam(X)$ we have
\begin{equation}\label{eq3-29.8}
\mathrm{cap}_{s,p,q}(B_r,B_R)
\lesssim
\inf_{r\le \hat{r}< 2\hat{r}\le R}\left(\frac{\hat{r}^{sp}}{\mu(B_{\hat{r}})}\right)^{-1 },
\end{equation}
where the comparison constant depends on $s$, $p$, and $C(c_D,c_{RD},C_{DT})^{1/q}$.

Moreover, the above estimates hold for $0<p<q\le\infty$, provided that the measure $\mu$ satisfies the dyadic tail condition \eqref{eq2_6.9}.
\end{corollary}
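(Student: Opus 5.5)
The plan is to deduce the corollary from Lemma~\ref{lem0_6.9} by monotonicity of the relative capacity in both arguments. Fix $\hat r$ with $r\le \hat r<2\hat r\le R$. The sets are nested as $B_r\subset B_{\hat r}$ and $B_{2\hat r}\subset B_R$.

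First we check that any admissible function for $\mathrm{cap}_{s,p,q}(B_{\hat r},B_{2\hat r})$ is admissible for $\mathrm{cap}_{s,p,q}(B_r,B_R)$. Let $u$ be such a function. Then $u=1$ on a neighbourhood of $B_{\hat r}$, hence on a neighbourhood of $B_r$. Also $u=0$ on $B_{2\hat r}^c$, which contains $B_R^c$. Finally, $0\le u\le 1$ on $B_{2\hat r}$, and $u=0$ on $B_R\setminus B_{2\hat r}$, so $0\le u\le 1$ on $B_R$. Consequently
\[
\mathrm{cap}_{s,p,q}(B_r,B_R)\le \mathrm{cap}_{s,p,q}(B_{\hat r},B_{2\hat r}).
\]

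Next we apply Lemma~\ref{lem0_6.9} with radius $\hat r$. This is legitimate because $2\hat r\le R<\diam(X)$, and it gives
\[
\mathrm{cap}_{s,p,q}(B_{\hat r},B_{2\hat r})\lesssim \frac{\mu(B_{\hat r})}{\hat r^{sp}}.
\]
The constant depends only on $s$, $p$, $c_D$, $c_{RD}$ and $C_{DT}^{1/q}$, and in particular not on $\hat r$. When $p<q$, condition \eqref{eq2_6.9} is assumed for all $r>0$, so the lemma applies at every scale $\hat r$. Combining the two displays and taking the infimum over admissible $\hat r$ yields \eqref{eq3-29.8}.

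The only point needing care is that the constant in Lemma~\ref{lem0_6.9} is uniform in the radius. Otherwise the infimum over $\hat r$ would not pass through the estimate. This uniformity holds because the lemma's constants come solely from doubling, reverse doubling, Lemma~\ref{Lip func} and \eqref{eq2_6.9}, none of which depend on the scale. So I expect no real obstacle.
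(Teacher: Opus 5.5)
Your proposal is correct and follows the paper's proof: monotonicity gives $\mathrm{cap}_{s,p,q}(B_r,B_R)\le \mathrm{cap}_{s,p,q}(B_{\hat r},B_{2\hat r})$, and Lemma~\ref{lem0_6.9} at scale $\hat r$, whose constant does not depend on the radius, yields the bound after taking the infimum over $\hat r$. Your explicit admissibility check simply writes out the monotonicity step that the paper invokes without comment.
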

\begin{proof}
Let   $0<r\le \hat{r}< 2\hat{r}\le R$. Then by monotonicity and Lemma~\ref{lem0_6.9}, we have
    \begin{align}
        \mathrm{cap}_{s,p,q}(B_r,B_R)
        \leq \inf_{r\le \hat{r}< 2\hat{r}\le R}\mathrm{cap}_{s,p,q}(B_{\hat{r}},B_{2\hat{r}})
        \lesssim\inf_{r\le \hat{r}< 2\hat{r}\le R}\frac{\mu(B_{\hat{r}})}{\hat{r}^{sp}}.
    \end{align}
Hence, the claim follows.
\end{proof}

\subsection{Proof of Theorem~\ref{sharp capacity estimate annuli}}
We first establish the following theorem, which will be used in Theorem~\ref{sharp capacity estimate annuli}.
\begin{theorem}\label{thm0_1.9}
   Let $0<s<1$, $1<p<\infty$, $0<q\le p$, and let  $i_0\geq1$ be an integer. 
  Then, for $2^{i_0}r < \frac{1}{4}\mathrm{diam}(X)$,  we have
\begin{equation}\label{eq2-29.8}
\mathrm{cap}_{s,p,q}(B_0,B_{i_0})
\lesssim
\left(\sum_{i=1}^{i_0}
\mathrm{cap}_{s,p,q}(B_{i-1},B_{i})^{1/(1-p)}
\right)^{1-p}, 
\end{equation}
where $B_i=B(O,2^ir)$ for $i=0,1,\dots, i_0$. Here, the comparison constant depends on  $s$, $p$, and $C(c_D,c_{RD},C_{DT})^{1/q}$.

Moreover, the above estimate also holds for $0<p<q\le\infty$, provided that the measure $\mu$ satisfies the dyadic tail condition \eqref{eq2_6.9}.
\end{theorem}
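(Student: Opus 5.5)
Since the upper bound requires only one competitor, I will build it by gluing near-optimal competitors on the dyadic annuli with optimally chosen weights. This is the standard series law for capacities. Set $c_i=\mathrm{cap}_{s,p,q}(B_{i-1},B_i)$ for $i=1,\dots,i_0$. By Lemma~\ref{lem0_6.9} and Corollary~\ref{lem0_4.9.26}, each $c_i$ is finite and comparable to $\mu(B_{i-1})/(2^{i-1}r)^{sp}$. Define weights
\[
\lambda_i=\frac{c_i^{1/(1-p)}}{\sum_{k=1}^{i_0}c_k^{1/(1-p)}},\qquad \sum_i\lambda_i=1 .
\]
Rather than arbitrary admissible functions, I take for each $i$ the explicit Lipschitz cutoff $u_i$ from the proof of Lemma~\ref{lem0_6.9}: $u_i=1$ on $B_{i-1}$, $u_i=0$ off $B_i$, and $u_i$ is $C2^{-i}r^{-1}$-Lipschitz. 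The competitor is then
\[
u=\sum_{i=1}^{i_0}\lambda_i u_i .
\]
It equals $1$ on $B_0$, vanishes off $B_{i_0}$, and takes values in $[0,1]$. On the annulus $A_i=B_i\setminus B_{i-1}$ we have $u=\sum_{k>i}\lambda_k+\lambda_i u_i$, so $u$ is locally $\lambda_i 2^{-i}r^{-1}$-Lipschitz there.

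To estimate the energy, I fix $x\in A_j$ (with $A_0=B_0$, and $A_j$ for $j>i_0$ outside $B_{i_0}$) and split the inner integral into three regions.

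\textbf{Near region}, $y\in B(x,2^{j-2}r)$. Here $y$ lies in $A_{j-1}\cup A_j\cup A_{j+1}$. Lemma~\ref{Lip func}-type estimates give a contribution $\lesssim (\lambda_{j-1}+\lambda_j+\lambda_{j+1})^q(2^jr)^{-sq}$.

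\textbf{Far region}, $y\in A_k$ with $k\le j-2$. Here $|u(x)-u(y)|\le\sum_{m=k}^{j+1}\lambda_m$, $d(x,y)\approx 2^jr$ and $\mu(B(x,d))\approx\mu(B_j)$. The whole region $B_{j-2}$ therefore contributes $\lesssim(\sum_{m\le j+1}\lambda_m)^q(2^jr)^{-sq}$. Note that $u(y)\le 1$ always.

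\textbf{Outer region}, $y\in A_k$ with $k\ge j+2$. The difference is $\le\sum_{m=j}^{k}\lambda_m$, and we use $\mu(B(x,d))\approx\mu(B_k)$. This region is where the tail sum over $k$ appears. When $q\le p$ I use $(\sum_k a_k)^{p/q}\le\cdots$ together with reverse doubling; when $p<q$ I invoke \eqref{eq2_6.9} in the same way as in Lemma~\ref{lem0_6.9}.

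Raising to $p/q$ and integrating over $A_j$ gives a weight $\mu(B_j)(2^jr)^{-sp}\approx c_j$. The total energy is thus bounded by
\[
\sum_j c_j\,\Big(\lambda_j+\lambda_{j\pm1}+\text{(far/outer tails)}\Big)^p .
\]
The local terms give $\sum_j c_j\lambda_j^p=(\sum_k c_k^{1/(1-p)})^{1-p}$, which is exactly the desired bound, since $c_j\lambda_j^p=c_j^{1/(1-p)}/S^p$ with $S=\sum_k c_k^{1/(1-p)}$; neighbouring indices are absorbed by doubling.

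\textbf{Main obstacle: the long-range tails.} These are the terms $\sum_j c_j\big(2^{-s(j-k)}$-type averages of cumulative sums $\sum_{m\le j}\lambda_m\big)^p$, together with the region $j>i_0$, where $x$ lies outside the support and the $q$-energy $\sum_j c_j(\text{stuff})$ is controlled by \eqref{eq2_6.9}. For the inner tail I would apply the discrete weighted Hardy inequality of Lemma~\ref{discrete weighted Hardy} with $a_m=\lambda_m$, $v_m=c_m$ and $u_n$ the tail weights built from $\mu(B_n)^{1-p/q}(2^nr)^{-sp}$, suitably combined with the geometric factors. Verifying the Muckenhoupt-type constant $A<\infty$ should reduce to showing $\sum_{n\ge k}u_n\lesssim u_k$ (the tail condition \eqref{eq2_6.9}, or reverse doubling when $q\le p$) and $\sum_{n\le k}c_n^{1/(1-p)}\lesssim c_k^{1/(1-p)}$-type geometric growth. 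The latter uses the reverse doubling exponent against $sp$, and I expect it to be the delicate point; if it fails, I would instead keep the crude bound $\sum_{m\le j}\lambda_m\le 1$ and exploit decay of $c_j$ relative to $\mu$ via \eqref{eq2_6.9}. For $p<q$ one additionally needs $(\cdot)^{p/q}$ with $p/q<1$, handled by subadditivity before applying the Hardy inequality.
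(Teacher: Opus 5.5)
\textbf{There is a genuine gap, in the inner long-range interaction.} Your overall architecture matches the paper: glue cutoffs with weights $\lambda_i\propto c_i^{1/(1-p)}$, split the energy into near, inner and outer interactions, and absorb volume terms via $c_j\approx\mu(B_j)/(2^jr)^{sp}$. Using explicit Lipschitz cutoffs instead of near-minimizers is harmless. The problem is the region $x\in A_j$, $y\in B_{j-2}$.

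You bound $|u(x)-u(y)|$ uniformly on all of $B_{j-2}$ by $\Lambda_j=\sum_{m\le j+1}\lambda_m$. This reduces the contribution to $\sum_j c_j\Lambda_j^p$, which is in general not controlled by $\sum_j c_j\lambda_j^p$. Take $X=\R^N$ with Lebesgue measure and $sp<N$.
\begin{itemize}
\item Then $c_j\approx(2^jr)^{N-sp}$ grows geometrically.
\item The optimal weights therefore decay geometrically, so $\lambda_1\gtrsim1$ and $\Lambda_j\gtrsim1$ for every $j$.
\item Your bound is then $\gtrsim\sum_j c_j\approx c_{i_0}\approx(2^{i_0}r)^{N-sp}$, while the target is $\bigl(\sum_k c_k^{1/(1-p)}\bigr)^{1-p}\approx c_1\approx r^{N-sp}$. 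The ratio is unbounded in $i_0$.
\end{itemize}
Correspondingly, the Muckenhoupt condition you hope to verify, $\sum_{n\le k}c_n^{1/(1-p)}\lesssim c_k^{1/(1-p)}$, forces $c_n$ to decay geometrically. It fails in exactly this regime, namely whenever $sp<\sigma$. Your fallback $\Lambda_j\le1$ gives $\sum_j c_j\approx c_{i_0}$, the same overshoot. So the argument as written only covers the case where $\mu(B_n)/(2^nr)^{sp}$ decays geometrically.

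The exterior region $x\notin B_{i_0}$ has the same defect. After \eqref{eq2_6.9} (or reverse doubling) it reduces to $\frac{\mu(B_{i_0})^{1-p/q}}{(2^{i_0}r)^{sp}}\bigl(\int_{B_{i_0}}|u|^q\,d\mu\bigr)^{p/q}$. Bounding $\int_{B_{i_0}}|u|^q\,d\mu$ by $\mu(B_{i_0})$ again yields $c_{i_0}$.

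\textbf{What is missing is to keep the annular weights.} For $x\in A_j$ the inner contribution is
$\lesssim\frac{1}{(2^jr)^{sq}\mu(B_j)}\sum_{k\le j}\mu(A_k)\bigl(\sum_{m=k}^{j}\lambda_m\bigr)^q$, and reverse doubling gives $\mu(A_k)/\mu(B_j)\lesssim2^{-\sigma(j-k)}$. The paper proceeds as follows.
\begin{itemize}
\item Insert factors $2^{\pm m\delta_0}$ with $0<\delta_0<\sigma/q$, apply H\"older and \eqref{ineq-p<1}, and exchange the sums. This replaces $\mu(B_j)\Lambda_j^q$ by $\sum_{m\le j}\lambda_m^q\mu(A_m)$.
\item The resulting quantity is $\sum_j\frac{\mu(B_j)^{1-p/q}}{(2^jr)^{sp}}\bigl(\sum_{m\le j}\lambda_m^q\mu(A_m)\bigr)^{p/q}$. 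The exterior term is its $j=i_0$ piece.
\item For $q<p$, apply Lemma~\ref{discrete weighted Hardy} with exponent $p/q$ to the sequence $\lambda_m^q\mu(A_m)$, with weights $u_n=v_n=\mu(B_n)^{1-p/q}(2^nr)^{-sp}$. The Muckenhoupt constant is finite because $v_n^{1-(p/q)'}=\mu(B_n)(2^nr)^{spq/(p-q)}$ grows geometrically for every doubling, reverse doubling $\mu$. In your setup, by contrast, $c_n^{1/(1-p)}$ grows only when $c_n$ decays.
\item For $p\le q$, use \eqref{ineq-p<1}, exchange the sums, and apply \eqref{eq2_6.9}.
\end{itemize}
Either way the output is $\sum_j\lambda_j^p\mu(B_j)/(2^jr)^{sp}\lesssim\sum_jc_j\lambda_j^p$, which is what your final optimization step needs.
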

\begin{proof}
 Let $i_0\ge 1$. For $i=0,1,\dots,i_0$, set $B_i=B(O,2^ir)$ and $B_{-i}=\emptyset$. 
Take \\$B_{i_0+1}= X\setminus B(O,2^{i_0}r)$.
Then under the given assumptions, Lemma~\ref{lem0_4.9.26} and ~\ref{lem0_6.9}, give that $0<\mathrm{cap}_{s,p,q}(B_{i-1},B_{i})<\infty$ for all $1\le i\le i_0$.

Let $\varepsilon>0$ be given. Then for each $i=1,\dots,i_0$, we choose an admissible function $u_{i-1}$ for $\mathrm{cap}_{s,p,q}(B_{i-1},B_{i})$ such that $u_{i-1}=0$ on $B_{i}^c$,  $u_{i-1}=1$ on $B_{i-1}$, and  
\begin{equation}\label{eq0_ 16.6.26}
    \int_{X}\left(\int_{X}\frac{|u_{i-1}(x)-u_{i-1}(y)|^q}{d(x,y)^{sq}\mu(B(x,d(x,y)))}\,d\mu(y)\right)^{\frac{p}{q}}\,d\mu(x)
\le
\mathrm{cap}_{s,p,q}(B_{i-1},B_{i})+\varepsilon .
\end{equation}
Set $u_{-i}\equiv 0$ for $i\in\N$. Let $\{a_i\}_{0\le i\le i_0-1}$ be a sequence of
nonnegative real numbers with
\[
\sum_{i=1}^{i_0} a_{i-1} = 1,
\]
and let $a_{-i}=0$.
Construct an admissible function $u$ for $\mathrm{cap}_{s,p,q}(B_0,B_{i_0})$ as
\begin{equation}\label{eq3_2.7}
    u=\sum_{i=1}^{i_0} a_{i-1} u_{i-1}.
\end{equation}
For this choice of $u$, we have
\begin{align}
    [u]_{W^s_{p,q}(X)}^p&\lesssim\int_{ B_{i_0}}\left(\int_{ B_{i_0}}\frac{|u(x)-u(y)|^q}{d(x,y)^{sq}\mu(B(x,d(x,y)))}\,d\mu(y)\right)^{\frac{p}{q}}\,d\mu(x)\\
    &\quad+\int_{ B_{i_0}}\left(\int_{ B_{i_0+1}}\frac{|u(x)-u(y)|^q}{d(x,y)^{sq}\mu(B(x,d(x,y)))}\,d\mu(y)\right)^{\frac{p}{q}}\,d\mu(x)\\
    &\qquad+\int_{ B_{i_0+1}}\left(\int_{ B_{i_0}}\frac{|u(x)-u(y)|^q}{d(x,y)^{sq}\mu(B(x,d(x,y)))}\,d\mu(y)\right)^{\frac{p}{q}}\,d\mu(x)\\
     &=:I_1+I_2+I_3.
\end{align}
We estimate $I_1$ as follows:
\begin{align}
    I_1&\lesssim\sum_{0\leq i \leq i_0}\int_{B_i\setminus B_{i-1}}\left(\int_{ B_{i-2}}\frac{|u(x)-u(y)|^q}{d(x,y)^{sq}\mu(B(x,d(x,y)))}\,d\mu(y)\right)^{\frac{p}{q}}\,d\mu(x)\\
    &\quad+ \sum_{0\leq i \leq i_0-1}\int_{B_i\setminus B_{i-1}}\left(\sum_{i-1\leq j \leq i+1}\int_{B_j\setminus B_{j-1}}\frac{|u(x)-u(y)|^q}{d(x,y)^{sq}\mu(B(x,d(x,y)))}\,d\mu(y)\right)^{\frac{p}{q}}\,d\mu(x)\\
     &\qquad+\int_{B_{i_0}\setminus B_{i_0-1}}\left(\sum_{i_0-1\leq j \leq i_0}\int_{B_j\setminus B_{j-1}}\frac{|u(x)-u(y)|^q}{d(x,y)^{sq}\mu(B(x,d(x,y)))}\,d\mu(y)\right)^{\frac{p}{q}}\,d\mu(x)\\
    &\qquad\quad+ \sum_{0\leq i \leq i_0-2}\int_{B_i\setminus B_{i-1}}\left(\sum_{i+2\leq j \le i_0}\int_{B_j\setminus B_{j-1}}\frac{|u(x)-u(y)|^q}{d(x,y)^{sq}\mu(B(x,d(x,y)))}\,d\mu(y)\right)^{\frac{p}{q}}\,d\mu(x)\\
     &=:I_{1,1}+I_{1,2}+I_{1,3}+I_{1,4}.
\end{align}
We estimate $I_{1,1}$ as follows:
\begin{align}
     I_{1,1}
     &\lesssim\sum_{0\leq i \leq i_0}\int_{B_i\setminus B_{i-1}}\left(\sum_{0\leq j \leq i-2}\int_{B_j\setminus B_{j-1}}\frac{\left(\sum_{k=j}^ia_{k-1}\right)^q}{(2^ir)^{sq}\mu(B(x,2^ir))}\,d\mu(y)\right)^{\frac{p}{q}}\,d\mu(x)\\
     &\lesssim\sum_{0\leq i \leq i_0}\frac{\mu(B_i)^{1-\frac{p}{q}}}{(2^ir)^{sp}}\left(\sum_{0\leq j \leq i-2}\mu(B_j\setminus B_{j-1})\left(\sum_{k=j}^ia_{k-1}\right)^q\right)^{\frac{p}{q}}.
\end{align}
Next, we estimate $I_{1,2}$ as follows. Recall that
\begin{equation}
    I_{1,2}= \sum_{0\leq i \leq i_0-1}\int_{B_i\setminus B_{i-1}}\left(\sum_{i-1\leq j \leq i+1}\int_{B_j\setminus B_{j-1}}\frac{|u(x)-u(y)|^q}{d(x,y)^{sq}\mu(B(x,d(x,y)))}\,d\mu(y)\right)^{\frac{p}{q}}\,d\mu(x).
\end{equation}
First, consider the case $j=i-1$ in the above integral. Then
\begin{align}
    \sum_{0\leq i \leq i_0-1}&\int_{B_i\setminus B_{i-1}}\left(\int_{B_{i-1}\setminus B_{i-2}}\frac{|u(x)-u(y)|^q}{d(x,y)^{sq}\mu(B(x,d(x,y)))}\,d\mu(y)\right)^{\frac{p}{q}}\,d\mu(x)\\
   & \lesssim \sum_{0\leq i \leq i_0-1}a_{i-2}^p\int_{B_i\setminus B_{i-1}}\left(\int_{B_{i-1}\setminus B_{i-2}}\frac{|u_{i-2}(x)-u_{i-2}(y)|^q}{d(x,y)^{sq}\mu(B(x,d(x,y)))}\,d\mu(y)\right)^{\frac{p}{q}}\,d\mu(x)\\
    &\qquad+ \sum_{0\leq i \leq i_0-1}a_{i-1}^p\int_{B_i\setminus B_{i-1}}\left(\int_{B_{i-1}\setminus B_{i-2}}\frac{|u_{i-1}(x)-u_{i-1}(y)|^q}{d(x,y)^{sq}\mu(B(x,d(x,y)))}\,d\mu(y)\right)^{\frac{p}{q}}\,d\mu(x).
\end{align}
Repeating the similar argument for $j\in\{i,i+1\}$, we obtain
\begin{align}
   I_{1,2}&\lesssim \sum_{0\leq i \leq i_0-1}a_{i-2}^p\int_{B_i\setminus B_{i-1}}\left(\int_{B_{i-1}\setminus B_{i-2}}\frac{|u_{i-2}(x)-u_{i-2}(y)|^q}{d(x,y)^{sq}\mu(B(x,d(x,y)))}\,d\mu(y)\right)^{\frac{p}{q}}\,d\mu(x)\\
         &\quad+ \sum_{0\leq i \leq i_0-1}a_{i-1}^p\int_{B_i\setminus B_{i-1}}\left(\int_{B_{i+1}\setminus B_{i-2}}\frac{|u_{i-1}(x)-u_{i-1}(y)|^q}{d(x,y)^{sq}\mu(B(x,d(x,y)))}\,d\mu(y)\right)^{\frac{p}{q}}\,d\mu(x)\\
         &\qquad+ \sum_{0\leq i \leq i_0-1}a_{i}^p\int_{B_i\setminus B_{i-1}}\left(\int_{B_{i+1}\setminus B_{i}}\frac{|u_{i}(x)-u_{i}(y)|^q}{d(x,y)^{sq}\mu(B(x,d(x,y)))}\,d\mu(y)\right)^{\frac{p}{q}}\,d\mu(x).
\end{align}
For $I_{1,3}$, we similarly obtain 
\begin{align}
    I_{1,3}&\lesssim a_{i_0-2}^p\int_{B_{i_0}\setminus B_{i_0-1}}\left(\int_{B_{i_0-1}\setminus B_{i_0-2}}\frac{|u_{i_0-2}(x)-u_{i_0-2}(y)|^q}{d(x,y)^{sq}\mu(B(x,d(x,y)))}\,d\mu(y)\right)^{\frac{p}{q}}\,d\mu(x)\\
    &\quad +  a_{i_0-1}^p\int_{B_{i_0}\setminus B_{i_0-1}}\left(\int_{B_{i_0}\setminus B_{i_0-2}}\frac{|u_{i_0-1}(x)-u_{i_0-1}(y)|^q}{d(x,y)^{sq}\mu(B(x,d(x,y)))}\,d\mu(y)\right)^{\frac{p}{q}}\,d\mu(x).
\end{align}
We now estimate for $I_{1,4}$ as follows:
\begin{align}
    I_{1,4}&\lesssim\sum_{0\le i\le i_0-2}\int_{B_i\setminus B_{i-1}}\left(\sum_{i+2\le j\le i_0}\int_{B_j\setminus B_{j-1}}\frac{\left(\sum_{k=i}^{j}a_{k-1}\right)^q}{(2^jr)^{sq}\mu(B(x,2^jr))}d\mu(y)\right)^\frac{p}{q}d\mu(x)\\
    &\lesssim\sum_{0\le i\le i_0-2}\mu({B_i\setminus B_{i-1}})\left(\sum_{i+2\le j\le i_0}\frac{\left(\sum_{k=i}^{j}a_{k-1}\right)^q}{(2^jr)^{sq}}\right)^\frac{p}{q}.
\end{align}
For $I_2$, we have 
\begin{align}
    I_2&\lesssim a_{i_0-1}^p\int_{B_{i_0}\setminus B_{i_0-1}}\left(\int_{ B_{i_0+1}}\frac{|u_{i_0-1}(x)-u_{i_0-1}(y)|^q}{d(x,y)^{sq}\mu(B(x,d(x,y)))}\,d\mu(y)\right)^{\frac{p}{q}}\,d\mu(x)\\
    &\quad+\sum_{0\leq i\leq i_0-1}\int_{ B_{i}\setminus B_{i-1}}\left(\int_{ B_{i_0+1}}\frac{|u(x)|^q}{d(x,y)^{sq}\mu(B(x,d(x,y)))}\,d\mu(y)\right)^{\frac{p}{q}}\,d\mu(x)\\
    &=:I_{2,1}+I_{2,2}.
\end{align}
We now estimate $I_{2,2}$ as follows:
\begin{align}
  I_{2,2}   &\lesssim\sum_{0\leq i\leq i_0-1} \int_{ B_{i}\setminus B_{i-1}}
   \left(\sum_{j= i_0+1}^N\int_{ B(O,2^{j}r)\setminus B(O,2^{j-1}r)}\frac{\left(\sum_{k=i}^{i_0}a_{k-1}\right)^q}{(2^{j}r)^{sq}\mu(B(O,2^{j}r))}\,d\mu(y)\right)^{\frac{p}{q}}\,d\mu(x)\\
    &\lesssim\sum_{0\leq i\leq i_0-1}\mu( B_{i}\setminus B_{i-1})\left(\frac{\left(\sum_{k=i}^{i_0}a_{k-1}\right)^q}{(2^{i_0}r)^{sq}}\right)^\frac{p}{q}.
\end{align}
For $I_3$, we have 
\begin{align}
     I_3&\lesssim a_{i_0-1}^p\int_{ B_{i_0+1}}\left(\int_{B_{i_0}\setminus B_{i_0-1}}\frac{|u_{i_0-1}(x)-u_{i_0-1}(y)|^q}{d(x,y)^{sq}\mu(B(x,d(x,y)))}\,d\mu(y)\right)^{\frac{p}{q}}\,d\mu(x)\\
    &\quad+
    \int_{ B_{i_0+1}}\left( \int_{ B_{i_0-1}}\frac{|u(x)-u(y)|^q}{d(x,y)^{sq}\mu(B(x,d(x,y)))}\,d\mu(y)\right)^{\frac{p}{q}}\,d\mu(x)\\
    &=:I_{3,1}+I_{3,2}.
\end{align}
We now estimate $I_{3,2}$ as follows:
\begin{align}
I_{3,2}&\lesssim\sum_{i=i_0}^N\int_{ B(O,2^{i+1}r)\setminus B(O,2^{i}r)}\left(\sum_{0\le j\le i_0-1}\int_{ B_j\setminus B_{j-1}}\frac{(\sum_{k=j}^{i_0}a_{k-1})^q}{(2^{i}r)^{sq}\mu(B(O,2^{i}r))}\,d\mu(y)\right)^{\frac{p}{q}}\,d\mu(x)\\
&\lesssim\sum_{i=i_0}^N\frac{\mu( B(O,2^{i}r))^{1-\frac{p}{q}}}{(2^{i}r)^{sp}}\left(\sum_{0\le j\le i_0-1}\mu(B_j\setminus B_{j-1})\left(\sum_{k=j}^{i_0}a_{k-1}\right)^q\right)^{\frac{p}{q}}\\
 &\lesssim \frac{\mu( B_{i_0})^{1-\frac{p}{q}}}{(2^{i_0}r)^{sp}}\left(\sum_{0\le j\le i_0-1}\mu(B_j\setminus B_{j-1})\left(\sum_{k=j}^{i_0}a_{k-1}\right)^q\right)^{\frac{p}{q}},
\end{align}
 where the last inequality follows from \eqref{eq2_6.9} when $p<q$, and from \eqref{iteration reverse} together with $s>0$ when $p\ge q$.

Let $\delta>0$ and $p>q$. Then by combining $I_{1,4}$ and $I_{2,2}$ we obtain 
\begin{align}
    I_{1,4}+I_{2,2} &\lesssim\sum_{0\le i\le i_0-1}\mu({B_i\setminus B_{i-1}})\left(\sum_{i+1\le j\le i_0}\frac{2^{-j\delta}\left(\sum_{k=i}^{j}a_{k-1}\right)^q}{(2^{j})^{sq-\delta} r^{sq}}\right)^\frac{p}{q}\\
     &\lesssim\sum_{0\le i\le i_0-1}\mu({B_i\setminus B_{i-1}})2^{-i\delta\frac{p}{q}}\sum_{i+1\le j\le i_0}\frac{\left(\sum_{k=i}^{j}a_{k-1}\right)^p}{(2^{j})^{sp-\delta\frac{p}{q}} r^{sp}},
\end{align}
using H\"older's inequality with exponent $p/q$. Fix $i$ and let $l=j-i$; here $j>i$. Then, $l=1,2,3,\dots,i_0-i$ and 
\begin{align}
    I_{1,4}+I_{2,2} &\lesssim\sum_{0\le i\le i_0-1}\mu({B_i\setminus B_{i-1}})2^{-i\delta\frac{p}{q}}\sum_{1\le l\le i_0-i}\frac{\left(\sum_{n=-1}^{l-1}a_{i+n}\right)^p}{(2^{l+i})^{sp-\delta\frac{p}{q}} r^{sp}}\\
    &=\sum_{0\le i\le i_0-1}\mu({B_i\setminus B_{i-1}})(2^ir)^{-sp}\sum_{1\le l\le i_0-i}\frac{\left(\sum_{n=-1}^{l-1}a_{i+n}\right)^p}{2^{lsp} }.
\end{align}
Applying Lemma~\ref{discrete weighted Hardy} with $u_n=v_n=2^{-nsp}$ for $1\le n\le i_0-i$ and $u_n=v_n=0$ otherwise, we obtain
\begin{equation}
    \sum_{n=k}^\infty u_n\lesssim 2^{-ksp}\text{ and }\sum_{n=0}^k v_n^{1-p'}\lesssim2^{\frac{ksp}{p-1}}.
\end{equation}
Thus, we have $A \le C(n,s,p)<\infty$ in \eqref{eq0_3.9}, which gives 
\begin{equation}
    I_{1,4}+I_{2,2}  \lesssim\sum_{0\le i\le i_0-1}\mu({B_i\setminus B_{i-1}})(2^ir)^{-sp}\sum_{1\le l\le i_0-i}\frac{a_{i+l-1}^p}{2^{lsp}},
\end{equation}
since $p>1$. Fix $i$ and take $m=i+l$. Thus, it follows from the above inequality and Tonelli's theorem that
\begin{align}
    I_{1,4}+I_{2,2}  &\lesssim\sum_{0\le i\le i_0-1}\mu({B_i\setminus B_{i-1}})\sum_{m=i+1}^{i_0}a_{m-1}^p(2^{m}r)^{-sp}\\
    &=\sum_{m=1}^{i_0}a_{m-1}^p(2^{m}r)^{-sp}\sum_{i=0}^{m-1}\mu({B_i\setminus B_{i-1}})\\
    &\leq \sum_{m=1}^{i_0}a_{m-1}^p(2^{m}r)^{-sp}\mu({B_m}).
\end{align}
Observe that the above inequality also holds for $p\leq q$, by using \eqref{ineq-p<1} in place of H\"older's inequality.

Finally, we will combine $I_{1,1}$ and $I_{3,2}$ to establish the remaining estimate. Let $p>q$ and $0<\delta_0<\sigma/q$, where $\sigma$ is as in \eqref{iteration reverse}. Using H\"older's inequality with exponent $p$, \eqref{ineq-p<1} and Tonelli's theorem, we obtain 
\begin{align}\label{eq1_30.8}
   I_{1,1}+I_{3,2}&\lesssim \sum_{0\leq i \leq i_0}\frac{\mu(B_i)^{1-\frac{p}{q}}}{(2^ir)^{sp}}\left(\sum_{0\leq j \leq i-1}\mu(B_j\setminus B_{j-1})\left(\sum_{k=j}^ia_{k-1}\right)^q\right)^{\frac{p}{q}}\\
   &\lesssim   \sum_{0\leq i \leq i_0}\frac{\mu(B_i)^{1-\frac{p}{q}}}{(2^ir)^{sp}}\left(\sum_{0\leq j \leq i-1}\mu(B_j\setminus B_{j-1})2^{-j\delta_0 q}\left(\sum_{k=j}^ia_{k-1}^p2^{k\delta_0 p}\right)^\frac{q}{p}\right)^{\frac{p}{q}}\\
    &\leq   \sum_{0\leq i \leq i_0}\frac{\mu(B_i)^{1-\frac{p}{q}}}{(2^ir)^{sp}}\left(\sum_{0\leq j \leq i-1}\mu(B_j\setminus B_{j-1})2^{-j\delta_0 q}\sum_{k=j}^ia_{k-1}^q2^{k\delta_0 q}\right)^{\frac{p}{q}}\\
    &\le  \sum_{0\leq i \leq i_0}\frac{\mu(B_i)^{1-\frac{p}{q}}}{(2^ir)^{sp}}\left(\sum_{0\leq k \leq i}a_{k-1}^q2^{k\delta_0 q}\sum_{j=0}^k\mu(B_j\setminus B_{j-1})2^{-j\delta_0 q}\right)^{\frac{p}{q}}\\
    &\lesssim   \sum_{0\leq i \leq i_0}\frac{\mu(B_i)^{1-\frac{p}{q}}}{(2^ir)^{sp}}\left(\sum_{0\leq k \leq i}a_{k-1}^q2^{k(\delta_0 q-\sigma)}\mu(B_k\setminus B_{k-1})\sum_{j=0}^k2^{j(\sigma-\delta_0 q)}\right)^{\frac{p}{q}}\\
    &\lesssim \sum_{0\leq i \leq i_0}\frac{\mu(B_i)^{1-\frac{p}{q}}}{(2^ir)^{sp}}\left(\sum_{0\leq k \leq i}a_{k-1}^q\mu(B_k\setminus B_{k-1})\right)^{\frac{p}{q}}.
   \end{align}
Applying Lemma~\ref{discrete weighted Hardy} with $u_n=v_n=\frac{\mu(B_n)^{1-\frac{p}{q}}}{(2^nr)^{sp}}$ for $0\le n\le i_0$ and $u_n=v_n=0$ otherwise, we obtain 
\begin{equation}
    \sum_{n=k}^\infty u_n\lesssim \frac{\mu(B_k)^{1 - \frac{p}{q}}}{(2^kr)^{sp}}\text{ and }\sum_{n=0}^k v_n^{1-(p/q)'}\lesssim \mu(B_k)(2^kr)^{\frac{spq}{p-q}},
\end{equation}
using \eqref{iteration reverse}. Thus, we have $A \le C(n,s,p,q)<\infty$ in \eqref{eq0_3.9}, which gives 
\begin{equation}\begin{split}
   I_{1,1}+I_{3,2}&\lesssim \sum_{0\leq i \leq i_0}\frac{\mu(B_i)^{1-\frac{p}{q}}}{(2^ir)^{sp}}a_{i-1}^p\mu(B_i)^\frac{p}{q}\\
   &=\sum_{0\leq i \leq i_0}\frac{\mu(B_i)}{(2^ir)^{sp}}a_{i-1}^p.
\end{split}
 \end{equation}
For $p\le q$, the estimate follows similarly. Fix $0<\delta_0<\sigma/q$. We use \eqref{ineq-p<1} and \eqref{eq2_6.9} to obtain
\begin{equation}\begin{split}
    I_{1,1}+I_{3,2}&\lesssim\sum_{0\leq i \leq i_0}\frac{\mu(B_i)^{1-\frac{p}{q}}}{(2^ir)^{sp}}\sum_{0\leq k \leq i}a_{k-1}^p\mu(B_k\setminus B_{k-1})^{\frac{p}{q}}\\
    &\leq\sum_{0\leq k \leq i_0}a_{k-1}^p\mu(B_k\setminus B_{k-1})^{\frac{p}{q}}\sum_{k\leq i \leq i_0}\frac{\mu(B_i)^{1-\frac{p}{q}}}{(2^ir)^{sp}}\\
      &\lesssim\sum_{0\leq k \leq i_0}a_{k-1}^p\frac{\mu(B_k)}{(2^kr)^{sp}}.
    \end{split}   
\end{equation}

Finally, combining the estimates for $I_{1}$, $I_{2}$ and $I_3$, we get
\begin{align}
    [u]_{W^s_{p,q}(X)}^p&\lesssim \sum_{1\leq i \leq i_0}a_{i-1}^p\frac{\mu(B_i)}{(2^ir)^{sp}}\\
     &\qquad+\sum_{1\leq i \leq i_0}a_{i-1}^p\int_{X}\left(\int_{X}\frac{|u_{i-1}(x)-u_{i-1}(y)|^q}{d(x,y)^{sq}\mu(B(x,d(x,y)))}\,d\mu(y)\right)^{\frac{p}{q}}\,d\mu(x)\\
     &\lesssim  \sum_{1\leq i \leq i_0}a_{i-1}^p\mathrm{cap}_{s,p,q}(B_{i-1},B_i)+\varepsilon,
     \end{align}
where the last inequality is obtained from \eqref{eq0_ 16.6.26} and Corollary~\ref{lem0_4.9.26}. Letting $\varepsilon\to 0$ and choosing
\begin{equation}
    a_i= \mathrm{cap}_{s,p,q}(B_i,B_{i+1} )^\frac{1}{1-p}\left(\sum_{1\leq i \leq i_0}\mathrm{cap}_{s,p,q}(B_i,B_{i+1} )^\frac{1}{1-p}\right)^{-1},
\end{equation}
we obtain the desired estimate.
\end{proof}



\begin{proof}[Proof of Theorem~\ref{sharp capacity estimate annuli}]
The lower bound for the capacity estimates follows from Lemma~\ref{UB capacity_ball}. When $p\le 1$, the corresponding upper bound follows from Corollary~\ref{ps<1}. 
 
Let $p>1$ and $i_0\geq 1$ be the smallest integer such that $2^{i_0}r \leq R< 2^{i_0+1}r$. For $i=1,\dots ,i_0$ we set    $B_i= B(O,2^ir)$. By Theorem~\ref{thm0_1.9} and Lemma~\ref{lem0_6.9}, we have
 \begin{align}
     \mathrm{cap}_{s,p,q}(B_r, B_R)&\le \mathrm{cap}_{s,p,q}(B_r, B_{i_0})\\
     &\lesssim \left(\sum_{i=1}^{i_0}
\mathrm{cap}_{s,p,q}(B_{i-1},B_{i})^{1/(1-p)}
\right)^{1-p} \\
 &\lesssim \left(\sum_{i=1}^{i_0}
\left(\frac{\mu(B_{i-1})}{(2^{i-1}r)^{sp}}\right)^{1/(1-p)}\right)^{1-p} \\
&\lesssim\left(\int_{r}^{R}\left(\frac{\hat{r}^{sp}}{\mu(B_{\hat{r}})}\right)^{\frac{1}{p-1}}\frac{d\hat{r}}{\hat{r}}\right)^{1-p },
 \end{align}
       where the last inequality follows from Lemma~\ref{Lemma 8.1} and doubling property of $\mu$. Hence, the claim follows. 
\end{proof}
\begin{proof}[Proof of Proposition~\ref{prop0_13.9}]
   The properties (i)-(iii) are proved separately.
   \begin{itemize}
       \item [(i)] Let $p>1$ and $0<2r<R$. By Theorem~\ref{sharp capacity estimate annuli}, we have
    \begin{align}
        \mathrm{cap}_{s,p,q}(\{O\}, B_R)\leq \limsup_{r\to0}\mathrm{cap}_{s,p,q}(B_r, B_R)
\lesssim\left(\int_{0}^{R}\left(\frac{\hat{r}^{sp}}{\mu(B_{\hat{r}})}\right)^{\frac{1}{p-1}}\frac{d\hat{r}}{\hat{r}}\right)^{1-p }=0.
    \end{align}
 A similar argument gives the result for $p\le1$.  Hence, the claim follows.\\
 \item[(ii)]  Let $C$ be a compact subset of $X$ and $r>0$ be such that $C\subset B(O,r)$. By Theorem~\ref{sharp capacity estimate annuli}, we have
    \begin{align}
        \mathrm{cap}_{s,p,q}(C, X)
\lesssim\left(\int_{r}^{\infty}\left(\frac{\hat{r}^{sp}}{\mu(B_{\hat{r}})}\right)^{\frac{1}{p-1}}\frac{d\hat{r}}{\hat{r}}\right)^{1-p }=0.
    \end{align}
  A similar argument gives the result for $p\le1$.  Hence, the claim follows.\\
  \item[(iii)] The proof follows immediately from Theorem~\ref{sharp capacity estimate annuli}.
 \end{itemize}
\end{proof}

\section{Capacity estimates for thin annuli}\label{thin annuli section}
In this section, we establish two-sided bounds for the capacity of thin
annuli. A lower bound for thin annuli requires an additional assumption on
the underlying space, which we state in Lemma~\ref{thin annuli_lb}.

\subsection{Lower bound for the capacity}
\begin{lemma}\label{thin annuli_lb}
     Let $s\in(0,1)$, $p\in(0,\infty)$ and $q\in(0,\infty]$. Let $0<r<\frac{1}{4}\diam(X)$, $0<\varepsilon<1$, $\lambda>1$ with $\lambda \varepsilon<1$  and $E\subset B_{r+\lambda\varepsilon r}\setminus B_{r+\varepsilon r}$ is such that $\mu(E)\geq c\mu(B_{r+\lambda \varepsilon r}\setminus B_{r+\varepsilon r})$ for some $c\in (0,1]$.  In addition, assume that there exists $\lambda'>1$ such that for every $x\in E $ there exists a point $x'$ such that $B(x',\varepsilon r/2)\subset B(x,\lambda'\varepsilon r)\cap B_r$.
 Then
   \begin{equation}\label{eq2_5.10}
\mathrm{cap}_{s,p,q}(B_r,B_{r+\varepsilon r})
\gtrsim
\frac{\mu(B_{r+\lambda\varepsilon r}\setminus B_{r+\varepsilon r})}{(\varepsilon r)^{sp}}+\frac{\mu(B_r)}{r^{sp}},
\end{equation}
where the comparison constant depends on $\lambda'$, $c$, $s$, $p$, and $C(c_{RD},c_D)^{1/q}$. 
\end{lemma}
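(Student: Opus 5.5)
The two terms on the right-hand side of \eqref{eq2_5.10} will be obtained separately. The second term, $\mu(B_r)/r^{sp}$, is a fat-annulus quantity, and I would get it from monotonicity. Any admissible $u$ for $\mathrm{cap}_{s,p,q}(B_r,B_{r+\varepsilon r})$ satisfies $u=1$ on $B_r$ and $u=0$ outside $B_{r+\varepsilon r}\subset B_{2r}$. Hence $u$ is also admissible for $\mathrm{cap}_{s,p,q}(B_{r/2},B_{2r})$. Applying Lemma~\ref{UB capacity_ball} with the pair $(r/2,2r)$ gives
\[
[u]^p_{W^s_{p,q}(X)}\gtrsim \mu(B_r)/r^{sp},
\]
using Lemma~\ref{Lemma 8.1} and doubling to replace the integral over $[r/2,2r]$ by its value at scale $r$; the case $p\le1$ is handled the same way. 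Here $2r<\frac12\diam(X)$, so the hypotheses of Lemma~\ref{UB capacity_ball} hold.

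The first term is the genuinely thin-annulus contribution. Fix an admissible $u$. Take $x\in E$; then $d(O,x)\ge r+\varepsilon r$, so $u(x)=0$. Let $x'$ be the point from the hypothesis, so that $B':=B(x',\varepsilon r/2)\subset B(x,\lambda'\varepsilon r)\cap B_r$. On $B'$ we have $u=1$, and therefore $|u(x)-u(y)|=1$ for every $y\in B'$. For $y\in B'$ we have $d(x,y)<\lambda'\varepsilon r$. By doubling, $\mu(B(x,d(x,y)))\le\mu(B(x,\lambda'\varepsilon r))\lesssim\mu(B')$, since $B(x,\lambda'\varepsilon r)\subset B(x',2\lambda'\varepsilon r)$ and $2\lambda'\varepsilon r$ is a fixed multiple of the radius of $B'$. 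Consequently
\[
\int_X\frac{|u(x)-u(y)|^q}{d(x,y)^{sq}\mu(B(x,d(x,y)))}\,d\mu(y)\ \ge\ \int_{B'}\frac{d\mu(y)}{(\lambda'\varepsilon r)^{sq}\,C\mu(B')}\ \gtrsim\ (\varepsilon r)^{-sq},
\]
with constants depending on $\lambda'$ and $c_D$. The case $q=\infty$ is immediate, since the essential supremum over $B'$ is at least $(\lambda'\varepsilon r)^{-s}C^{-1/\infty}$, read with the usual convention. Raising this bound to the power $p/q$ and integrating over $x\in E$ gives
\[
[u]^p_{W^s_{p,q}(X)}\gtrsim \mu(E)(\varepsilon r)^{-sp}\ge c\,\mu(B_{r+\lambda\varepsilon r}\setminus B_{r+\varepsilon r})(\varepsilon r)^{-sp}.
\]

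Adding the two lower bounds, using $a+b\le 2\max\{a,b\}$, and taking the infimum over all admissible $u$ yields \eqref{eq2_5.10}.

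The only delicate point I expect is the measure comparison $\mu(B(x,d(x,y)))\lesssim\mu(B')$. For it I would use only the upper bound $d(x,y)\le\lambda'\varepsilon r$ together with doubling centered at $x'$; no lower bound on $d(x,y)$ is needed, because the integrand only increases when $d(x,y)$ is smaller. Two further checks are routine. First, $u$ is only required to equal $1$ on a neighbourhood of $B_r$, which is enough because $B'\subset B_r$. Second, $u=0$ on $\Omega^c$ holds pointwise on $E$.
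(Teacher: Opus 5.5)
Your first term is the paper's argument: you pair $x\in E$, where $u=0$, with $y\in B(x',\varepsilon r/2)\subset B_r$, where $u=1$, bound $d(x,y)\lesssim\varepsilon r$, and compare $\mu(B(x,d(x,y)))$ with $\mu(B(x',\varepsilon r/2))$ by doubling. The gap is in the second term.

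\textbf{The range check is wrong.} Lemma~\ref{UB capacity_ball} is stated for $0<2r\le R<\frac14\diam(X)$. Applying it to the pair $(r/2,2r)$ therefore needs $2r<\frac14\diam(X)$, i.e.\ $r<\frac18\diam(X)$. Your check ``$2r<\frac12\diam(X)$'' verifies the wrong inequality. So for bounded $X$ and $\frac18\diam(X)\le r<\frac14\diam(X)$ the citation is unjustified. Monotonicity does not let you move to another pair $(r',R')$ inside the lemma's range. You need $R'\ge r+\varepsilon r$, and $r+\varepsilon r$ can be nearly $\frac12\diam(X)$, well above $\frac14\diam(X)$.

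\textbf{The paper's direct estimate.} The paper handles the second term directly, and this works on the whole range $r<\frac14\diam(X)$:
\begin{itemize}
\item For $x\in B_r$ and $y\in B_{4r}\setminus B_{2r}$ you have $|u(x)-u(y)|=1$ and $d(x,y)<5r$.
\item Doubling gives $\mu(B(x,d(x,y)))\le\mu(B_{6r})\lesssim\mu(B_{4r})$.
\item Reverse doubling at radius $2r$ gives $\mu(B_{4r}\setminus B_{2r})\ge(1-c_{RD})\mu(B_{4r})$. This step is legitimate precisely because $4r<\diam(X)$.
\end{itemize}
Hence the inner integral is $\gtrsim r^{-sq}$ for every $x\in B_r$ (for $q=\infty$, the essential supremum is $\ge(5r)^{-s}$). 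Integrating over $B_r$ gives $\mu(B_r)/r^{sp}$.

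Alternatively, you could reopen the proof of Lemma~\ref{UB capacity_ball} for this specific pair. With $r'=r/2$ and $i_0=1$, it only invokes reverse doubling up to radius $2r$, so it also works whenever $4r<\diam(X)$. That, however, requires re-running the proof, not citing the statement.

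With either repair your proof is complete. The direct computation is also considerably lighter than going through the median machinery.
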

\begin{proof}
Let $u$ be any admissible function for $\mathrm{cap}_{s,p,q}(B_r,B_{r+\varepsilon r}).$ 
Then
\begin{align}
    \int_{E}&\left(\int_{B(x',\varepsilon r/2)}\frac{|u(x)-u(y)|}{d(x,y)^{sq}\mu(B(x,d(x,y)))}d\mu(y)\right)^\frac{p}{q}\,d\mu(x)\\
    &\gtrsim \int_{E}\left(\int_{B(x',\varepsilon r/2)}\frac{1}{(\varepsilon r)^{sq}\mu(B(x,(\lambda'+1/2)\varepsilon r))}d\mu(y)\right)^\frac{p}{q}\,d\mu(x)\\
    &\gtrsim\frac{\mu(B_{r+\lambda\varepsilon r}\setminus B_{r+\varepsilon r})}{(\varepsilon r)^{sp}}.
    \end{align}

Set $A_j=B(O,2^{j+1}r)\setminus B(O,2^jr)$ for all $j\in\N$. Then
\begin{align}
    \int_{B_{r}}&\left(\int_{B_{2r}^c}\frac{|u(x)-u(y)|}{d(x,y)^{sq}\mu(B(x,d(x,y)))}d\mu(y)\right)^\frac{p}{q}\,d\mu(x)\\
    &\gtrsim \int_{B_{r}}\left(\sum_{j=1}^N\int_{A_j}\frac{1}{(2^jr)^{sq}\mu(B(x,2^jr))}d\mu(y)\right)^\frac{p}{q}\,d\mu(x)\\
    &\gtrsim \frac{\mu(B_r)}{r^{sp}}.
    \end{align}
Hence, the claim follows.
\end{proof}

\begin{remark} \label{weak assu_thin}
If $X$ is a length space with $\mu$ doubling, the assumptions of the above lemma always hold. Indeed, for $x\in E$ one can pick
$x'$ on a curve from $O$ to $x$ of length less than $d(O,x)+\varepsilon r$ with
$r-c\varepsilon r\le d(O,x')\le r-\varepsilon r/2$, for some $c\in[1/2,1]$. This is not true for general connected spaces.
\end{remark}

\subsection{Two-sided bounds}

To obtain a sharp two-sided estimate for the capacity of thin annuli, we require the following annular decay property.

Recall that $O\in X$ is a fixed point and that we write $B_r=B(O,r)$.
Then, we say that $\mu$ satisfies the  upper \emph{$\eta$-annular decay property} if there exist $c_A\geq 1$ and $\eta>0$ 
such that for all $r>0$ and all $0\leq\delta_1\le\delta_2\le 1$, 
 \begin{equation}\label{eq3_5.10}
 \begin{split}
       \mu(B_{r-\delta_1 r}\setminus B_{r-\delta_2 r})&\leq c_A\, |\delta_1-\delta_2|^{\eta}\mu(B_r).
 \end{split}
   \end{equation}
It satisfies the reverse \emph{$\eta$-annular decay property} if the same shells satisfy the inequalities with $c_A^{-1}$ in place of $c_A$ and $\geq$ in place of $\leq$. It has the \emph{$\eta$-annular decay property} if both hold.\\

The global upper annular decay property was first studied for manifolds by Colding--Minicozzi~\cite{CoMi} under the name volume regularity
property manifolds. Buckley~\cite{BuSt} independently studied it for general metric spaces, calling it the annular decay property. Its pointwise
version has been used in the study of the first-order Sobolev spaces
$N^{1,p}$ in~\cite{BBJu}.

In the Sobolev setting, the reverse $\eta$-annular decay property can be obtained
for $\eta\ge 1$ if $\mu$ is doubling and reverse doubling and $X$ supports an
$\eta$-Poincar\'e inequality, see~\cite[Lemma~5.2]{BBJu}. The proof relies on the
locality of the gradient, which vanishes where the test function is constant, so
that the energy is concentrated in the annulus. In the fractional setting, the Gagliardo energy is nonlocal and pointwise in nature, and hence cannot be localized to the annulus in the same way.
We therefore assume the reverse $\eta$-annular decay property in Theorem~\ref{thin annuli_main} to obtain
a two-sided sharp bound for the capacity of thin annuli. For $X=\mathbb{R}^n$
with the Lebesgue measure, it holds with $\eta=1$.


We now establish an upper bound estimate for the capacity of thin annuli.

\begin{lemma}\label{thin annuli_ub}
     Let $0<s<1$, $0<q\le p<\infty$ and $0<\varepsilon<1$. Assume  $0<r<\frac{1}{4}\diam(X)$ and  that the measure $\mu$ satisfies the upper $\eta$-annular decay property. Then   
     \begin{equation}
\mathrm{cap}_{s,p,q}(B_r,B_{r+\varepsilon r})
\lesssim
 \begin{cases}
   \frac{\mu(B_r)}{r^{sp}}\left(1+\varepsilon^{\eta-sp}+\varepsilon^{-sp}\right), & sp\ge\eta,\\ 
    \frac{\mu(B_r)}{r^{sp}}( \varepsilon^{\eta-sp}+1)\,\quad\qquad,& sp< \eta,
\end{cases}
\end{equation}
where the comparison constant depends on $s$, $p$, $\eta$, $c_A$, and $C(c_D,c_{RD},C_{DT})^{1/q}$. 

Moreover, the above estimates also hold for $0<p<q\le\infty$, provided that the measure $\mu$ satisfies the dyadic tail condition \eqref{eq2_6.9}.
\end{lemma}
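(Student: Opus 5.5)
The plan is to test with a single competitor, exactly as in Lemma~\ref{lem0_6.9}. Set
\[
u(x)=\1_{B_r}(x)+\Bigl(1-\frac{d(x,O)-r}{\varepsilon r}\Bigr)\1_{B_{r+\varepsilon r}\setminus B_r}(x).
\]
This function is $(\varepsilon r)^{-1}$-Lipschitz and admissible for $\mathrm{cap}_{s,p,q}(B_r,B_{r+\varepsilon r})$. Strictly speaking, $u=1$ is needed on a neighbourhood of $B_r$, so one works with $B_{r+\varepsilon r/10}$ in place of $B_r$, at the cost of constants. Write $S=B_{r+\varepsilon r}\setminus B_r$ for the transition shell. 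The energy integral over $x$ is split into three regions: $x\in B_{4r}$ within distance $\varepsilon r$ of $S$; the rest of $B_{4r}$; and $B_{4r}^c$.

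\textbf{Far region $B_{4r}^c$.} Here $u(x)=0$ and $u(y)\neq 0$ only for $y\in B_{2r}$. The dyadic computation of Lemma~\ref{lem0_6.9} applies verbatim and gives a bound $\lesssim \mu(B_r)/r^{sp}$. It uses \eqref{eq2_6.9} when $p<q$, and \eqref{iteration reverse} when $p\ge q$.

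\textbf{Near region.} For $x$ with $\dist(x,S)<\varepsilon r$, Lemma~\ref{Lip func} bounds the inner integral by $(\varepsilon r)^{-sq}$. The measure of this region is that of a shell $B_{r+2\varepsilon r}\setminus B_{r-\varepsilon r}$. By the upper $\eta$-annular decay \eqref{eq3_5.10}, applied around radius comparable to $r$ together with doubling, this is $\lesssim\varepsilon^\eta\mu(B_r)$. The contribution is therefore $\lesssim \varepsilon^{\eta-sp}\mu(B_r)/r^{sp}$.

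\textbf{Intermediate region.} Consider $x\in B_{4r}$ with $t:=\dist(x,S)\ge\varepsilon r$. Then $|u(x)-u(y)|\le 1$, and $u(x)\ne u(y)$ forces $d(x,y)\ge t$. Hence the inner integral is at most
\[
\int_{d(x,y)\ge t}\frac{d\mu(y)}{d(x,y)^{sq}\mu(B(x,d(x,y)))}\lesssim t^{-sq},
\]
which is the standard dyadic sum, using doubling. The contribution is then $\lesssim\int t^{-sp}\,d\mu(x)$. Decompose this integral into dyadic layers $2^k\varepsilon r\le t<2^{k+1}\varepsilon r$ with $0\le k\lesssim\log_2(1/\varepsilon)$. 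Each layer is contained in a shell of relative width $\approx 2^k\varepsilon$ around $\partial B_r$ or $\partial B_{r+\varepsilon r}$, so by annular decay its measure is $\lesssim (2^k\varepsilon)^\eta\mu(B_r)$. This holds while $2^k\varepsilon\le 1$; the remaining layers have $t\approx r$ and contribute $\lesssim\mu(B_r)r^{-sp}$. The total is
\[
\frac{\mu(B_r)}{r^{sp}}\Bigl(1+\varepsilon^{\eta-sp}\sum_{k\le\log_2(1/\varepsilon)}2^{k(\eta-sp)}\Bigr).
\]
When $sp<\eta$ the sum is geometric and dominated by its top term $\varepsilon^{sp-\eta}$. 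The whole contribution is then $\lesssim\mu(B_r)r^{-sp}(1+\varepsilon^{\eta-sp})$, which is actually $\lesssim\mu(B_r)r^{-sp}$, consistent with the stated bound. When $sp\ge\eta$ the sum is dominated by $k=0$, giving $\varepsilon^{\eta-sp}$. If $sp=\eta$ there is a logarithmic factor $\log(1/\varepsilon)$; it is absorbed by the crude term $\varepsilon^{-sp}$ in the stated bound.

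\textbf{Main obstacle.} The delicate point is applying \eqref{eq3_5.10} to shells lying outside $B_r$, that is, of the form $B_{r+a}\setminus B_{r+b}$. Annular decay is stated only for shells inside $B_\rho$. I would apply it with $\rho=r+\varepsilon r$ (or $\rho=2r$), writing the outer shell as $B_{\rho-\delta_1\rho}\setminus B_{\rho-\delta_2\rho}$, and then use doubling to replace $\mu(B_\rho)$ by $\mu(B_r)$.

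If this layer bookkeeping becomes messy, there is a cruder fallback for the $sp\ge\eta$ case. Bound every inner integral in $B_{4r}$ by $(\varepsilon r)^{-sq}$ via Lemma~\ref{Lip func}. This gives $\mu(B_{4r})(\varepsilon r)^{-sp}\lesssim\varepsilon^{-sp}\mu(B_r)r^{-sp}$, which is exactly the extra term appearing in the statement. For $sp<\eta$, the dyadic-layer computation above is needed.
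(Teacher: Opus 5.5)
Your plan follows the paper's scheme: one Lipschitz competitor, the far field handled by the dyadic computation of Lemma~\ref{lem0_6.9}, Lemma~\ref{Lip func} near the transition shell, and annular-decay layers elsewhere. One step fails as written, though.

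In the intermediate region you assert that $u(x)\neq u(y)$ forces $d(x,y)\ge t=\dist(x,S)$. No length structure is assumed, so this is false in general. For $x\in B_r$ and $y\notin B_{r+\varepsilon r}$, the triangle inequality only gives $d(x,y)\ge d(O,y)-d(O,x)$, and nothing forces a point of $S$ to lie between $x$ and $y$. For instance, take $X=\R^2\setminus\{r\le|z|<(1+\varepsilon)r\}$, which is still doubling, reverse doubling and has upper $1$-annular decay. Then $S=\emptyset$ and $t\equiv\infty$, so your bound assigns zero energy to pairs with $|u(x)-u(y)|=1$ at distance about $\varepsilon r$.

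The repair is to measure everything radially. Set $t_x=\max\{r-d(O,x),\,d(O,x)-(1+\varepsilon)r,\,0\}$. Then:
\begin{itemize}
\item $u(x)\ne u(y)$ does give $d(x,y)\ge|d(O,x)-d(O,y)|\ge t_x$;
\item the near region $\{t_x<\varepsilon r\}$ lies in $B_{r+2\varepsilon r}\setminus B_{r-\varepsilon r}$;
\item the layers $\{2^k\varepsilon r\le t_x<2^{k+1}\varepsilon r\}$ are genuine radial shells, exactly what \eqref{eq3_5.10} controls.
\end{itemize}
The rest of your computation then goes through unchanged. The paper's use of $\dist(x,\partial B_r)$ in the case $sp<\eta$ needs the same radial reading.

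With this repair your argument is correct, and in fact sharper than the paper's. The paper bounds the energy of all $x\in B_r\cup(B_{2r}\setminus B_{r+\varepsilon r})$ crudely by Lemma~\ref{Lip func}, which is where the term $\varepsilon^{-sp}$ comes from; this is precisely your ``crude fallback''. Only for $sp<\eta$ does the paper use annular-decay layers. It then runs them all the way to the boundary of $B_r$, the same computation as in Lemma~\ref{lem0_6.10}, without splitting off an $\varepsilon r$-neighbourhood.

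You instead truncate the layers at scale $\varepsilon r$ and use the Lipschitz bound only inside that neighbourhood. This treats both regimes with one computation and gives $\mathrm{cap}_{s,p,q}(B_r,B_{r+\varepsilon r})\lesssim(1+\varepsilon^{\eta-sp})\mu(B_r)/r^{sp}$ for $sp>\eta$, with an extra factor $\log(1/\varepsilon)$ at $sp=\eta$. The paper's upper bound in \eqref{eq0_5.10} is dominated by $\varepsilon^{-sp}$, so its two sides differ by a factor $\varepsilon^{-\eta}$. Your version matches the lower bound there when $sp>\eta$, and misses it only by a logarithm when $sp=\eta$.
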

\begin{proof}
Define 
\begin{equation}
    u(x)=\left(1-\frac{\dist(x,B_r)}{\varepsilon r}\right)\1_{B_{r+\varepsilon r}}.
\end{equation}
The function $u$ is $(1/\varepsilon r)$-Lipschitz and admissible for $\mathrm{cap}_{s,p,q}(B_r,B_{r+\varepsilon r})$.
Set $A_j=B(O,2^{j+1}r)\setminus B(O,2^jr)$ for all $j\in\N$. Then we have
\begin{align}
\int_{B_{2r}^c}&\left(\int_{B_{r+\varepsilon r}}
\frac{|u(x)-u(y)|^{q}}
     {d(x,y)^{s q}\mu(B(x,d(x,y)))}\,d\mu(y)\right)^\frac{p}{q}d\mu(x)\\
     &\leq \sum_{j=1}^N \int_{A_j}\left(\int_{B_{r+\varepsilon r}}
\frac{1}
     {d(x,y)^{s q}\mu(B(x,d(x,y)))}\,d\mu(y)\right)^\frac{p}{q}d\mu(x)\\
     &\lesssim \sum_{j=1}^N \int_{A_j}\left(
\frac{\mu(B_{2r})}{(2^jr)^{sq}\mu(B(x,2^jr))}\right)^\frac{p}{q}d\mu(x)\\
&\lesssim \sum_{j=1}^N \frac{\mu(B_{2r})^\frac{p}{q}\mu(B(O,2^{j}r))^{1-\frac{p}{q}}}{(2^jr)^{sp}}\\
&\lesssim \frac{\mu(B_{2r})}{r^{sp}},
\end{align}
where the last inequality follows from \eqref{eq2_6.9} when $p<q$, and from \eqref{iteration reverse} together with $s>0$ when $p\ge q$. 
Now since $u$ is Lipschitz, we have by Lemma~\ref{Lip func},
\[
\int_{B_{r+\varepsilon r}\setminus B_{r}}\left(\int_X
\frac{|u(x)-u(y)|^{q}}
     {d(x,y)^{s q}\mu(B(x,d(x,y)))}\,d\mu(y)\right)^\frac{p}{q}d\mu(x)\lesssim\frac{\mu(B_{r+\varepsilon r}\setminus B_{r})}{(\varepsilon r)^{sp}}\lesssim\varepsilon^{\eta-sp}\frac{\mu(B_r)}{r^{sp}}.
\]
 and 
\[
\int_{B_{r}\cup [B_{2r}\setminus B_{r+\varepsilon r}]}\left(\int_X
\frac{|u(x)-u(y)|^{q}}
     {d(x,y)^{s q}\mu(B(x,d(x,y)))}\,d\mu(y)\right)^\frac{p}{q}d\mu(x)\lesssim\frac{\mu(B_r)}{(\varepsilon r)^{sp}}.
\]

Let $sp<\eta$. For $x\in B_{r}$, fix $r_x= \dist(x,\partial B_r)$ and set $A_j(x)=B(x,2^{j+1}r_x)\setminus B(x,2^jr_x)$, $j\in\N\cup\{0\}$.
Then, we have
\begin{align}
    \int_{B_{r}}&\left(\int_{ B_r^c } \frac{|u(x)-u(y)|}{d(x,y)^{sq}\mu(B(x,d(x,y)))}    \,d\mu(y)\right)^\frac{p}{q}\,d\mu(x)\\
    &\leq \int_{B_{r}}\left(\int_{B(x,\dist(x,\partial B_r))^c} \frac{|u(x)-u(y)|}{d(x,y)^{sq}\mu(B(x,d(x,y)))}    \,d\mu(y)\right)^\frac{p}{q}\,d\mu(x)\\
  & \le \int_{B_{r}}\left(\sum_{j=0}^N\int_{A_j(x)} \frac{1}{(2^jr_x)^{sq}\mu(B(x,2^jr_x))}    \,d\mu(y)\right)^\frac{p}{q}\,d\mu(x)\\
  & \lesssim \int_{B_{r}} \dist(x,\partial B_r)^{-sp}d\mu(x)\\
  & \le \sum_{j=1}^\infty \frac{2^{jsp}}{ r^{sp}}\mu(\{x\in B_{r} : 2^{-j} r\le \dist(x,\partial B_r)< 2^{-j+1} r\}) \\
   & \lesssim\sum_{j=1}^\infty \frac{2^{jsp}}{r^{sp}}\mu(B(O,r-2^{-j} r)\setminus B(O,r-2^{-j+1} r) )\lesssim\frac{\mu(B_r)}{r^{sp}}.
\end{align} 
By a similar argument, we have
\begin{equation}\begin{split}
     \int_{B_{2r}\setminus B_{r+\varepsilon r}}&\left(\int_{B_{r+\varepsilon r}
     } \frac{|u(x)-u(y)|}{d(x,y)^{sq}\mu(B(x,d(x,y)))}    \,d\mu(y)\right)^\frac{p}{q}\,d\mu(x)\\
     &\lesssim \sum_{j=1}^\infty \frac{2^{jsp}}{r^{sp}}\mu(B(O,r+2^{-j} r)\setminus B(O,r+2^{-j+1} r) )\lesssim\frac{\mu(B_r)}{r^{sp}}.
\end{split}
\end{equation}
Observe that the above two estimates blow up when $sp\geq\eta$. Therefore, for $sp\geq\eta$ we have 
\begin{equation}
    \mathrm{cap}_{s,p,q}(B_r,B_{r+\varepsilon r})
\lesssim
  \varepsilon^{\eta-sp}\frac{\mu(B_r)}{r^{sp}}+\frac{\mu(B_r)}{r^{sp}}\left(1+\frac{1}{\varepsilon^{sp}}\right),
\end{equation}
    and for $sp<\eta$ we have
    \begin{align}
         \mathrm{cap}_{s,p,q}(B_r,B_{r+\varepsilon r})
&\lesssim
  \varepsilon^{\eta-sp}\frac{\mu(B_r)}{r^{sp}}+\frac{\mu(B_r)}{r^{sp}}\min\left\{1,\frac{1}{\varepsilon^{sp}}\right\}\\
 & \lesssim
  \varepsilon^{\eta-sp}\frac{\mu(B_r)}{r^{sp}}+\frac{\mu(B_r)}{r^{sp}}.
    \end{align}
    Hence the claim follows.
\end{proof}

The following theorem provides two-sided bounds for the capacity of thin annuli.

\begin{theorem}\label{thin annuli_main}
     Let $0<s<1$, $0<q\le p<\infty$, and assume that the measure $\mu$ satisfies the $\eta$-annular decay property.  Let $0<r<\frac{1}{4}\diam(X)$, $0<\varepsilon<1$, $\lambda>1$ with $\lambda \varepsilon<1$  and $E\subset B_{r+\lambda\varepsilon r}\setminus B_{r+\varepsilon r}$ is such that $\mu(E)\geq c\mu(B_{r+\lambda \varepsilon r}\setminus B_{r+\varepsilon r})$ for some $c\in (0,1]$.  In addition, assume that there exists $\lambda'>1$ such that for every $x\in E $ there exists a point $x'$ such that $B(x',\varepsilon r/2)\subset B(x,\lambda'\varepsilon r)\cap B_r$.
 Then, for $sp\ge \eta$ we have
   \begin{equation}\label{eq0_5.10}
(\varepsilon^{\eta-sp}+1)\frac{\mu(B_r)}{r^{sp}}\lesssim\mathrm{cap}_{s,p,q}(B_r,B_{r+\varepsilon r})
\lesssim
( \varepsilon^{\eta-sp}+\varepsilon^{-sp}+1)\frac{\mu(B_r)}{r^{sp}},
\end{equation}
and  for $sp< \eta$ we have
\begin{equation}
    \mathrm{cap}_{s,p,q}(B_r,B_{r+\varepsilon r})\simeq \frac{\mu(B_r)}{r^{sp}},
\end{equation}
where the comparison constant depends on $s$, $p$, $\eta$, $c_A$, $\lambda'$, $c$, and $C(c_D,c_{RD},C_{DT})^{1/q}$.  

Moreover, the above estimates also hold for $0<p<q\le\infty$, provided that the measure $\mu$ satisfies the dyadic tail condition \eqref{eq2_6.9}.
\end{theorem}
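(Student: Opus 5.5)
The plan is to combine Lemma~\ref{thin annuli_lb} (lower bound) with Lemma~\ref{thin annuli_ub} (upper bound). The only extra ingredient is the reverse $\eta$-annular decay property, which turns the measure of the shell $B_{r+\lambda\varepsilon r}\setminus B_{r+\varepsilon r}$ into a power of $\varepsilon$ times $\mu(B_r)$. The upper $\eta$-annular decay is part of the assumption that $\mu$ has the $\eta$-annular decay property, so Lemma~\ref{thin annuli_ub} applies directly. In the case $p<q$ it applies under \eqref{eq2_6.9}, as stated there.

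\emph{Upper bounds.} For $sp\ge\eta$, Lemma~\ref{thin annuli_ub} gives exactly
\[
\mathrm{cap}_{s,p,q}(B_r,B_{r+\varepsilon r})\lesssim(\varepsilon^{\eta-sp}+\varepsilon^{-sp}+1)\frac{\mu(B_r)}{r^{sp}}.
\]
For $sp<\eta$ it gives $(\varepsilon^{\eta-sp}+1)\mu(B_r)/r^{sp}$. Since $\varepsilon<1$ and $\eta-sp>0$, we have $\varepsilon^{\eta-sp}\le 1$, so this is $\lesssim\mu(B_r)/r^{sp}$.

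\emph{Lower bounds.} The hypotheses on $E$, $\lambda$, $\lambda'$ are exactly those of Lemma~\ref{thin annuli_lb}, so
\[
\mathrm{cap}_{s,p,q}(B_r,B_{r+\varepsilon r})\gtrsim\frac{\mu(B_{r+\lambda\varepsilon r}\setminus B_{r+\varepsilon r})}{(\varepsilon r)^{sp}}+\frac{\mu(B_r)}{r^{sp}}.
\]
For $sp<\eta$ the second term alone gives $\gtrsim\mu(B_r)/r^{sp}$, which closes that case. For $sp\ge\eta$ it remains to show
\[
\mu(B_{r+\lambda\varepsilon r}\setminus B_{r+\varepsilon r})\gtrsim\varepsilon^\eta\mu(B_r).
\]
The decay property is stated for shells inside a ball $B_{\rho}$, namely $B_{\rho-\delta_1\rho}\setminus B_{\rho-\delta_2\rho}$. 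I will apply the reverse version with $\rho=r+\lambda\varepsilon r$, $\delta_1=0$ and $\delta_2=(\lambda-1)\varepsilon/(1+\lambda\varepsilon)\in(0,1)$. This gives
\[
\mu(B_{r+\lambda\varepsilon r}\setminus B_{r+\varepsilon r})\ge c_A^{-1}\delta_2^{\eta}\,\mu(B_{r+\lambda\varepsilon r})\ge c_A^{-1}\Big(\frac{\lambda-1}{2}\Big)^{\eta}\varepsilon^{\eta}\mu(B_r),
\]
using $\lambda\varepsilon<1$ and monotonicity of $\mu$. Dividing by $(\varepsilon r)^{sp}$ yields $\varepsilon^{\eta-sp}\mu(B_r)/r^{sp}$, and adding the second term gives the claimed $(\varepsilon^{\eta-sp}+1)\mu(B_r)/r^{sp}$.

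The one point to watch is the constant's dependence on $\lambda$ through $(\lambda-1)^\eta$. The statement lists $\lambda'$ and $c$ but not $\lambda$, so I would either record the dependence on $\lambda$ or regard $\lambda$ as a fixed parameter of the setup. Everything else is a direct assembly of the two lemmas. Since both lemmas hold for $0<p<q\le\infty$ under \eqref{eq2_6.9}, the final ``moreover'' clause follows the same way.
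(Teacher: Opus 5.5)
Your proposal is correct and follows the paper's own proof, which combines Lemma~\ref{thin annuli_ub} and Lemma~\ref{thin annuli_lb} with the $\eta$-annular decay property; your explicit choice $\rho=r+\lambda\varepsilon r$, $\delta_1=0$, $\delta_2=(\lambda-1)\varepsilon/(1+\lambda\varepsilon)$ supplies the shell estimate that the paper leaves implicit. Your remark that the comparison constant then also depends on $\lambda$ (through the factor $(\lambda-1)^{\eta}$) is accurate, and the paper's list of dependencies omits it.
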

\begin{proof}The proof follows from Lemma~\ref{thin annuli_ub} and Lemma~\ref{thin annuli_lb} together with $\eta$-annular decay property.
\end{proof}

    \begin{remark}
If $sp<\eta$, the capacity of thin annuli is positive and finite. If $sp\ge\eta$, we obtain
sharp capacity bounds whenever $\varepsilon$ is bounded away from $0$, and the
capacity of thin annuli blows up as $ \varepsilon\to0$.
\end{remark}

\begin{corollary}
Under the assumptions of Theorem~\ref{thin annuli_main}, if $sp=s'p'$ and $0<q,q'\le\infty$, then $\mathrm{cap}_{s,p,q}(B_r,B_{r+\varepsilon r})\simeq\mathrm{cap}_{s',p',q'}(B_r,B_{r+\varepsilon r})\simeq\mathrm{cap}_{s,p,p}(B_r,B_{r+\varepsilon r})$.
\end{corollary}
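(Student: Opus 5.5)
The plan is to read off both capacities from Theorem~\ref{thin annuli_main} and observe that the resulting two-sided bounds do not involve $s$, $p$, $q$ separately, only the product $sp$. Since $(s',p',q')$ is assumed to satisfy the same hypotheses, we may apply Theorem~\ref{thin annuli_main} to both triples. When $q'>p'$ this means the dyadic tail condition \eqref{eq2_6.9} is also assumed for $(s',p',q')$. The pairs $(s,p)$ and $(s',p')$ share the same value $sp=s'p'$, and therefore fall into the same case, $sp\ge\eta$ or $sp<\eta$.

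\emph{Case $sp<\eta$.} Theorem~\ref{thin annuli_main} gives
\[
\mathrm{cap}_{s,p,q}(B_r,B_{r+\varepsilon r})\simeq \frac{\mu(B_r)}{r^{sp}}=\frac{\mu(B_r)}{r^{s'p'}}\simeq \mathrm{cap}_{s',p',q'}(B_r,B_{r+\varepsilon r}).
\]
Taking $(s',p',q')=(s,p,p)$ yields the comparison with $\mathrm{cap}_{s,p,p}$. For $q=p$, condition \eqref{eq2_6.9} holds automatically by the remark following it, so no extra hypothesis is needed in that case.

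\emph{Case $sp\ge\eta$.} Here Theorem~\ref{thin annuli_main} only provides
\[
(\varepsilon^{\eta-sp}+1)\frac{\mu(B_r)}{r^{sp}}\lesssim \mathrm{cap}\lesssim(\varepsilon^{\eta-sp}+\varepsilon^{-sp}+1)\frac{\mu(B_r)}{r^{sp}},
\]
and the two sides do not match as $\varepsilon\to0$. Both bounds are identical for the triples $(s,p,q)$ and $(s',p',q')$, but that is not enough by itself to compare the capacities. This is the main obstacle. I would handle it as the paper's remark does: the bounds are sharp when $\varepsilon$ is bounded away from $0$. For $\varepsilon\ge\varepsilon_0$, every expression above is comparable to $\mu(B_r)/r^{sp}$, with constants depending on $\varepsilon_0$. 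The comparison then follows exactly as in the first case.

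If a statement uniform in $\varepsilon$ is required, I would try to sharpen the lower bound in Lemma~\ref{thin annuli_lb} to the term $\varepsilon^{-sp}\mu(B_r)/r^{sp}$. Since that lower bound is expressed through $sp$ alone, it would again give comparability. The constants throughout depend on $s,p,s',p'$, on $C(c_D,c_{RD},C_{DT})^{1/q}$ and its $q'$-analogue, and on $\eta$, $c_A$, $\lambda'$, $c$.
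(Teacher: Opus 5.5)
Your case $sp<\eta$ is the one-line argument the paper intends (it gives no written proof). Theorem~\ref{thin annuli_main} gives $\mathrm{cap}\simeq\mu(B_r)/r^{sp}$ for each triple, and the hypotheses on $\eta$, $E$, $x'$ do not involve $(s,p,q)$; \eqref{eq2_6.9} must be assumed for any triple with $q>p$.

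You are also right that for $sp\ge\eta$ the theorem does not give the claim. Its bounds are $\varepsilon^{\eta-sp}$ and $\varepsilon^{-sp}$ times $\mu(B_r)/r^{sp}$, which differ by the factor $\varepsilon^{-\eta}$, so the paper's ``immediate'' derivation has the same hole. Your argument still leaves a genuine gap there. With $\varepsilon\ge\varepsilon_0$ and $\varepsilon_0$-dependent constants, both capacities are just $\simeq\mu(B_r)/r^{sp}$. That says nothing about the regime $\varepsilon\to0$, which is the content of a statement whose constants are meant to be independent of $\varepsilon$.

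The repair you sketch cannot work, because a lower bound $\varepsilon^{-sp}\mu(B_r)/r^{sp}$ is false. Take $\R^n$ with $p=q$ and $sp>1=\eta$, and the cutoff $u$ of Lemma~\ref{thin annuli_ub}. We have $u(y)=u(x)$ whenever $|x-y|$ is less than the distance $t$ from $x$ to the layer $B_{r+\varepsilon r}\setminus B_r$. So the energy density is $\lesssim(\varepsilon r)^{-sp}$ within distance $\varepsilon r$ of the layer and $\lesssim t^{-sp}$ for $t\in[\varepsilon r,r]$, and $B_{2r}^c$ contributes $\lesssim r^{n-sp}$. The total energy is therefore $\lesssim\varepsilon^{1-sp}r^{n-sp}+r^{n-1}\int_{\varepsilon r}^{r}t^{-sp}\,dt+r^{n-sp}\simeq\varepsilon^{1-sp}r^{n-sp}$, far below $\varepsilon^{-sp}r^{n-sp}$.

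The slack is in the upper bound. The term $\mu(B_r)/(\varepsilon r)^{sp}$ in Lemma~\ref{thin annuli_ub} comes from applying Lemma~\ref{Lip func} on all of $B_r\cup(B_{2r}\setminus B_{r+\varepsilon r})$. Instead, use the Lipschitz bound only on the $\varepsilon r$-neighbourhood of the layer, whose measure is $\lesssim\varepsilon^{\eta}\mu(B_r)$. On the shells where $t\simeq 2^{-j}r$ with $2^{-j}\ge\varepsilon$, use the bound $t^{-sp}$; these shells have measure $\lesssim 2^{-j\eta}\mu(B_r)$ by the upper annular decay. This gives
\[
\mathrm{cap}_{s,p,q}(B_r,B_{r+\varepsilon r})\lesssim\frac{\mu(B_r)}{r^{sp}}\Bigl(\varepsilon^{\eta-sp}+\sum_{1\le j\le\log_2(2/\varepsilon)}2^{j(sp-\eta)}\Bigr).
\]
For $sp>\eta$ the right-hand side is $\simeq\varepsilon^{\eta-sp}\mu(B_r)/r^{sp}$, which matches the lower bound in Theorem~\ref{thin annuli_main}. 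It depends on $(s,p)$ only through $sp$, so the corollary follows uniformly in $\varepsilon$ for $sp>\eta$.

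At $sp=\eta$ the refined upper bound is $\log(2/\varepsilon)\,\mu(B_r)/r^{sp}$, while Lemma~\ref{thin annuli_lb} only gives $\mu(B_r)/r^{sp}$. The endpoint therefore needs a matching logarithmic lower bound, which neither you nor the paper supply.
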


\subsection{Capacity characterization using indicator functions}
In this section, we characterize the capacity of fat and thin annuli using
indicator functions. We first establish Lemma~\ref{lem0_6.10}, which gives
conditions under which indicator functions of balls belong to the fractional
Sobolev space $W^s_{p,q}(X)$. Using it, we prove
Corollary~\ref{coro_14.9} in a more general setting.

\begin{lemma}\label{lem0_6.10}
   Let $0<s<1$ and $0<q\le p<\infty$. Assume that the measure $\mu$ satisfies the upper $\eta$-annular decay property and $sp<\eta$.  Then for $r>0$ we have
   \begin{equation}
  [\1_{B_r}]_{W^s_{p,q}(X)}^p\lesssim\frac{\mu(B_r)}{r^{sp}},
   \end{equation}
   where the comparison constant depends on $s$, $p$, $\eta$, $c_A$, and $C(c_D,c_{RD},C_{DT})^{1/q}$.
   
   Moreover, the above estimates also hold for $0<p<q\le\infty$, provided that the measure $\mu$ satisfies the dyadic tail condition  \eqref{eq2_6.9}.
\end{lemma}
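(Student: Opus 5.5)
We split the energy of $u=\1_{B_r}$ according to where $x$ and $y$ lie. Since $|u(x)-u(y)|$ vanishes unless exactly one of $x,y$ is in $B_r$, we have
\[
[\1_{B_r}]_{W^s_{p,q}(X)}^p=\int_{B_r}\Big(\int_{B_r^c}\frac{d\mu(y)}{d(x,y)^{sq}\mu(B(x,d(x,y)))}\Big)^{\frac pq}d\mu(x)+\int_{B_r^c}\Big(\int_{B_r}\frac{d\mu(y)}{d(x,y)^{sq}\mu(B(x,d(x,y)))}\Big)^{\frac pq}d\mu(x)=:J_1+J_2,
\]
with the obvious modification for $q=\infty$. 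We estimate each piece by reusing the computations already carried out in the proof of Lemma~\ref{thin annuli_ub}; there the relevant integrals were bounded with $|u(x)-u(y)|\le1$, so they apply verbatim to the indicator.

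\textbf{Step 1 ($J_1$).} For $x\in B_r$ put $r_x=\dist(x,\partial B_r)$, so $B_r^c\subset B(x,r_x)^c$. Decomposing $B(x,r_x)^c$ into the dyadic annuli $A_j(x)$ and using doubling gives
\[
\int_{B_r^c}\frac{d\mu(y)}{d(x,y)^{sq}\mu(B(x,d(x,y)))}\lesssim r_x^{-sq}.
\]
Hence $J_1\lesssim\int_{B_r}\dist(x,\partial B_r)^{-sp}\,d\mu(x)$. We split $B_r$ into the layers $\{2^{-j}r\le r_x<2^{-j+1}r\}\subset B_{r-2^{-j}r}\setminus B_{r-2^{-j+1}r}$. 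By the upper $\eta$-annular decay these have measure $\lesssim 2^{-j\eta}\mu(B_r)$, so
\[
J_1\lesssim\frac{\mu(B_r)}{r^{sp}}\sum_j 2^{j(sp-\eta)},
\]
and the series converges precisely because $sp<\eta$.

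\textbf{Step 2 ($J_2$), near part.} We split $B_r^c$ into $B_{2r}\setminus B_r$ and $B_{2r}^c$. For $x\in B_{2r}\setminus B_r$ the same argument with $r_x=\dist(x,\partial B_r)$, now using the outer shells $B_{r+2^{-j+1}r}\setminus B_{r+2^{-j}r}$, gives a contribution $\lesssim\mu(B_r)/r^{sp}$. Again this uses the upper $\eta$-annular decay, applied at radius $2r$ together with doubling, and $sp<\eta$.

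\textbf{Step 3 ($J_2$), far part.} For $x\in B_{2r}^c$ we argue as in Lemma~\ref{lem0_6.9}. For $x\in A_j=B(O,2^{j+1}r)\setminus B(O,2^jr)$ we have $d(x,y)\approx 2^jr$ and $\mu(B(x,d(x,y)))\approx\mu(B(O,2^jr))$, so
\[
\int_{B_{2r}^c}\Big(\int_{B_r}\cdots\Big)^{\frac pq}d\mu(x)\lesssim\mu(B_r)^{\frac pq}\sum_{j\ge1}\frac{\mu(B(O,2^jr))^{1-\frac pq}}{(2^jr)^{sp}}.
\]
The sum is $\lesssim \mu(B_r)^{1-p/q}r^{-sp}$. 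For $p\ge q$ this follows from reverse doubling \eqref{iteration reverse} and $s>0$; for $p<q$ it follows from the dyadic tail condition \eqref{eq2_6.9}. If $r\ge\diam X/4$ the far part is void or handled by doubling.

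The only genuinely delicate point is Step 1 and the near part of Step 2. This is where the nonlocal singularity at $\partial B_r$ must be integrable, and that is exactly what $sp<\eta$ together with annular decay provides. The rest is the standard far-field tail estimate already used earlier. Adding Steps 1 to 3 yields $[\1_{B_r}]_{W^s_{p,q}(X)}^p\lesssim\mu(B_r)/r^{sp}$.
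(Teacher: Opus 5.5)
Your proposal is correct and follows the paper's proof essentially step by step. Your $J_1$ and the near and far parts of $J_2$ are the paper's $I_1$, $I_2$, $I_3$, handled by the same dyadic-annulus bound around $x$, the upper $\eta$-annular decay summed against $2^{jsp}$ (convergent because $sp<\eta$), and the reverse doubling or dyadic tail estimate \eqref{eq2_6.9} for the far field.

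One small point, shared with the paper: with $r_x=\dist(x,\partial B_r)$, the inclusion $B_r^c\subset B(x,r_x)^c$ and your layer inclusion can fail in disconnected spaces such as Cantor-type sets, where $\partial B_r$ may even be empty. It is cleaner to take $r_x=|d(O,x)-r|$. Then $d(x,y)\ge r_x$ follows from the triangle inequality, and the layers lie in the shells controlled by the annular decay (the sphere $\{d(O,\cdot)=r\}$ is $\mu$-null by the same property).
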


\begin{proof}
    Let $sp<\eta$ and $r>0$. Take $u=\1_{B_r}$. Then 
    \begin{align}
    [u]_{W^s_{p,q}(X)}^p&\lesssim \int_{B_{r}}\left(\int_{B_r^c} \frac{|u(x)-u(y)|}{d(x,y)^{sq}\mu(B(x,d(x,y)))}    \,d\mu(y)\right)^\frac{p}{q}\,d\mu(x)\\
     &\qquad+\int_{B_{2r}\setminus B_r}\left(\int_{B_r} \frac{|u(x)-u(y)|}{d(x,y)^{sq}\mu(B(x,d(x,y)))}    \,d\mu(y)\right)^\frac{p}{q}\,d\mu(x)\\
    &\quad\qquad+\int_{B_{2r}^c}\left(\int_{B_r} \frac{|u(x)-u(y)|}{d(x,y)^{sq}\mu(B(x,d(x,y)))}    \,d\mu(y)\right)^\frac{p}{q}\,d\mu(x)\\
   &=:I_1+I_2+I_3.
   \end{align}
   For $x\in B_r$, fix $r_x= \dist(x,\partial B_r)$ and set $A_j(x)=B(x,2^{j+1}r_x)\setminus B(x,2^jr_x)$, $j\in\N\cup\{0\}$. Then for $I_1$ we have
\begin{align}\label{I1_14.9}
    I_1 &\leq \int_{B_{r}}\left(\int_{B(x,\dist(x,\partial B_r))^c} \frac{|u(x)-u(y)|}{d(x,y)^{sq}\mu(B(x,d(x,y)))}    \,d\mu(y)\right)^\frac{p}{q}\,d\mu(x)\\
  & \le \int_{B_{r}}\left(\sum_{j=0}^N\int_{A_j(x)} \frac{1}{(2^jr_x)^{sq}\mu(B(x,2^jr_x))}    \,d\mu(y)\right)^\frac{p}{q}\,d\mu(x)\\
  & \lesssim \int_{B_{r}} \dist(x,\partial B_r)^{-sp}d\mu(x)\\
  & \le \sum_{j=1}^\infty \frac{2^{jsp}}{r^{sp}}\mu(\{x\in B_r : 2^{-j}r\le \dist(x,\partial B_r)< 2^{-j+1}r\}) \\
   & = \sum_{j=1}^\infty \frac{2^{jsp}}{r^{sp}}\mu(B(O,r-2^{-j}r)\setminus B(O,r-2^{-j+1}r) )
   \\
  &\lesssim  \frac{\mu(B(O,r))}{r^{sp}} \sum_{j=1}^\infty \frac{2^{jsp}}{2^{j\eta}}\lesssim\frac{\mu(B(O,r))}{r^{sp}},
\end{align}
 since $sp<\eta$. Using a similar argument, we obtain $I_2\lesssim\frac{\mu(B(O,r))}{r^{sp}}$.    Set $A_j=B(O,2^{j+1}r)\setminus B(O,2^jr)$, $j\in\N$. Then, for $I_3$ we have
     \begin{align}
I_3 &\lesssim  \sum_{j=1}^N \int_{A_j}\left(\int_{B_{r}}
 \frac{1}
      {d(x,y)^{s q}\mu(B(x,d(x,y)))}\,d\mu(y)\right)^\frac{p}{q}d\mu(x)\\
      &\lesssim \sum_{j=1}^N \int_{A_j}\left(
\frac{\mu(B_{r})}{(2^jr)^{sq}\mu(B(x,2^jr))}\right)^\frac{p}{q}d\mu(x)\\
&\lesssim \sum_{j=1}^N \frac{\mu(B_{r})^\frac{p}{q}\mu(B(O,2^{j}r))^{1-\frac{p}{q}}}{(2^jr)^{sp}}\\
 &\lesssim \frac{\mu(B_{r})}{r^{sp}},
\end{align}
 where the last inequality follows from \eqref{eq2_6.9} when $p<q$ and from \eqref{iteration reverse} when $p\ge q$. Hence, the claim follows.
\end{proof}
Let us single out the following consequences.

 \begin{corollary}[Theorem~\ref{sharp capacity estimate annuli}]\label{ind_fat annuli}
        Let $0<s<1$ and $0<q\le p<\infty$. Assume that the measure $\mu$ satisfies the upper $\eta$-annular decay property and $sp<\eta$.
 Then, for $0<2r\le R<\frac{1}{4}\diam(X)$ we have
   \begin{equation}
              \mathrm{cap}_{s,p,q}(B_r,B_R)\simeq 
              \begin{cases}
              \displaystyle
     \left(\int_{r}^{R}\,[\1_{B_{\hat{r}}}]_{{W}^s_{p,q}(X)}^{\frac{- p}{p-1}}\frac{d\hat{r}}{\hat{r}}\right)^{1-p }, & p>1,\\ \\
\displaystyle
   \underset{r\leq \hat r<2\hat r\leq R}{\inf}\,[\1_{B_{\hat{r}}}]_{{W}^s_{p,q}(X)}^p\qquad, & p\le 1,
\end{cases}
            \end{equation}
where the comparison constant depends on $s$, $p$, $\eta$, $c_A$, and $C(c_D,c_{RD},C_{DT})^{1/q}$.

Moreover, the above estimates also hold for $0<p<q\le\infty$, provided that the measure $\mu$ satisfies the dyadic tail condition \eqref{eq2_6.9}.
 \end{corollary}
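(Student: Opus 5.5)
The plan is to combine Theorem~\ref{sharp capacity estimate annuli} with a two-sided comparison
\begin{equation}
[\1_{B_{\hat r}}]_{W^s_{p,q}(X)}^p\simeq \frac{\mu(B_{\hat r})}{\hat r^{sp}},\qquad 0<\hat r<\tfrac14\diam(X),
\end{equation}
with constants independent of $\hat r$. Once this comparison is available, we substitute it into \eqref{2.2_5.9} and \eqref{2.3_5.9}. For $p>1$ we have $[\1_{B_{\hat r}}]^{-p/(p-1)}\simeq (\hat r^{sp}/\mu(B_{\hat r}))^{1/(p-1)}$, so the integrands agree up to constants. For $p\le1$ we have $[\1_{B_{\hat r}}]^p\simeq(\hat r^{sp}/\mu(B_{\hat r}))^{-1}$, so the infima agree. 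The case $p<q$ is handled identically, using the dyadic tail version of both ingredients.

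The upper bound in the comparison is exactly Lemma~\ref{lem0_6.10}, which requires the upper $\eta$-annular decay property, $sp<\eta$, and \eqref{eq2_6.9} when $p<q$. For the lower bound I would not use capacity. Instead I would estimate directly, keeping only the interaction between $x\in B_{\hat r}$ and $y\in B_{4\hat r}\setminus B_{2\hat r}$. Here $|u(x)-u(y)|=1$ and $d(x,y)\le 5\hat r$, so doubling gives $\mu(B(x,d(x,y)))\lesssim\mu(B_{\hat r})$. Reverse doubling gives $\mu(B_{4\hat r}\setminus B_{2\hat r})\simeq\mu(B_{4\hat r})\gtrsim \mu(B_{\hat r})$; this needs $4\hat r<\diam(X)$ up to adjusting the constants, which holds since $\hat r\le R/2<\diam(X)/8$. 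The inner integral is then $\gtrsim \hat r^{-sq}$, and integrating its $p/q$ power over $B_{\hat r}$ yields $[\1_{B_{\hat r}}]^p\gtrsim\mu(B_{\hat r})/\hat r^{sp}$. This is essentially the second computation in the proof of Lemma~\ref{thin annuli_lb}. When $q=\infty$ the same argument works with the essential supremum in place of the inner integral.

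The only point needing care is the range of $\hat r$ in the integral for $p>1$, namely $\hat r\in[r,R]$. We must ensure both that the lower bound remains valid up to $\hat r=R$ (fine, since $R<\diam(X)/4$ leaves room for the annulus $B_{2\hat r}\setminus B_{\hat r}$ to have comparable measure by reverse doubling), and that the comparison constants are uniform in $\hat r$. Since each bound uses only doubling, reverse doubling, annular decay and \eqref{eq2_6.9}, with constants independent of scale, uniformity holds. I expect no real obstacle: the substantive work, namely the finiteness of the indicator energy for $sp<\eta$, is already contained in Lemma~\ref{lem0_6.10}.
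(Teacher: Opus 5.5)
Your proposal is correct and follows the same overall route as the paper. Both arguments establish $[\1_{B_{\hat r}}]_{W^s_{p,q}(X)}^p\simeq \mu(B_{\hat r})/\hat r^{sp}$, take the upper bound from Lemma~\ref{lem0_6.10}, and substitute the comparison into Theorem~\ref{sharp capacity estimate annuli}.

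The only difference is how you get the lower bound. The paper notes that $\1_{B_{\hat r}}$ is admissible for $\mathrm{cap}_{s,p,q}(B_{\hat r},B_{2\hat r})$ and then invokes Corollary~\ref{lem0_4.9.26}. You instead estimate the energy directly from the interaction of $B_{\hat r}$ with a neighbouring annulus, which is the same computation as the second part of the proof of Lemma~\ref{thin annuli_lb}.

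Your version is more elementary and needs only $4\hat r<\diam(X)$ for reverse doubling on the annulus. It therefore gives the comparison for every $\hat r\in[r,R]$ in the $p>1$ integral. The capacity route needs $2\hat r<\frac14\diam(X)$, and the paper's proof as written only treats $r\le\hat r<2\hat r\le R$. So for bounded $X$ with $R$ close to $\frac14\diam(X)$, the paper's argument does not directly reach the top of the range $(R/2,R]$, whereas yours does.

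A small tidy-up: in your second paragraph the justification ``since $\hat r\le R/2$'' is needlessly restrictive and clashes with your last paragraph. The bound $\hat r\le R<\frac14\diam(X)$ already gives $4\hat r<\diam(X)$. Either annulus, $B_{4\hat r}\setminus B_{2\hat r}$ or $B_{2\hat r}\setminus B_{\hat r}$, then works.
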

\begin{proof}
     Let   $0<r\le \hat{r}< 2\hat{r}\le R< \frac{1}{4}\diam(X)$. Since $u=\1_{B_{\hat{r}}}$ is admissible for $\mathrm{cap}_{s,p,q}(B_{\hat{r}},B_{2\hat{r}})$, it follows from Lemma~\ref{lem0_6.10}, Corollary~\ref{lem0_4.9.26} and Lemma~\ref{lem0_6.9},
 \begin{equation}
 \frac{\mu(B_{\hat{r}})}{\hat{r}^{sp}} \simeq  \mathrm{cap}_{s,p,q}(B_{\hat{r}},B_{2\hat{r}})\simeq     [\1_{B_{\hat{r}}}]_{W^s_{p,q}(X)}^p.
 \end{equation}
   Hence, the claim follows by combining the above estimate with Theorem~\ref{sharp capacity estimate annuli}. 
\end{proof}

 \begin{corollary}[Theorem~\ref{thin annuli_main}] \label{ind_thin annuli}
Under the assumptions of Theorem~\ref{thin annuli_main}, we have
   \begin{equation}
              \mathrm{cap}_{s,p,q}(B_r,B_{r+\varepsilon r})\simeq [\1_{B_{r}}]_{{W}^s_{p,q}(X)}^p,
            \end{equation}
           where the comparison constant depends on $s$, $p$, $\eta$, $c_A$, and $C(c_D,c_{RD},C_{DT})^{1/q}$.
\end{corollary}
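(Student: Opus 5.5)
The plan is to compare both sides of the claimed equivalence with the quantity $\mu(B_r)/r^{sp}$ and then chain the estimates. The natural regime is the one in which Lemma~\ref{lem0_6.10} applies, namely $sp<\eta$. There Theorem~\ref{thin annuli_main} already gives
\[
\mathrm{cap}_{s,p,q}(B_r,B_{r+\varepsilon r})\simeq \frac{\mu(B_r)}{r^{sp}}.
\]
It therefore suffices to show
\[
[\1_{B_r}]^p_{W^s_{p,q}(X)}\simeq \frac{\mu(B_r)}{r^{sp}},
\]
with constants independent of $\varepsilon$.

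\textbf{Upper bound.} The upper estimate $[\1_{B_r}]^p_{W^s_{p,q}(X)}\lesssim \mu(B_r)/r^{sp}$ is exactly Lemma~\ref{lem0_6.10}. That lemma uses only the upper $\eta$-annular decay property (implied by the full $\eta$-annular decay assumed in Theorem~\ref{thin annuli_main}), the condition $sp<\eta$, and, when $p<q$, the dyadic tail condition \eqref{eq2_6.9}.

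\textbf{Lower bound.} For $[\1_{B_r}]^p_{W^s_{p,q}(X)}\gtrsim \mu(B_r)/r^{sp}$, note that $\1_{B_r}$ is admissible for $\mathrm{cap}_{s,p,q}(B_r,B_{2r})$: it equals $1$ on a neighbourhood of $B_r$ (namely $B_r$ itself, which is open), takes values in $[0,1]$, and vanishes outside $B_{2r}$. Corollary~\ref{lem0_4.9.26} then gives
\[
[\1_{B_r}]^p_{W^s_{p,q}(X)}\ge \mathrm{cap}_{s,p,q}(B_r,B_{2r})\gtrsim \frac{\mu(B_r)}{r^{sp}},
\]
provided $2r<\tfrac14\diam(X)$. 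This hypothesis requires care, since Theorem~\ref{thin annuli_main} only assumes $r<\tfrac14\diam(X)$. To avoid the issue I would instead reproduce the second half of the proof of Lemma~\ref{thin annuli_lb} directly for $u=\1_{B_r}$. One integrates over $x\in B_r$ and $y$ in the dyadic shells $A_j$ outside $B_{2r}$, where $|u(x)-u(y)|=1$. Reverse doubling ensures these shells carry mass comparable to $\mu(B(O,2^{j+1}r))$, which again yields $\mu(B_r)/r^{sp}$. Alternatively, one can simply use $\1_{B_r}$ as a competitor for $\mathrm{cap}_{s,p,q}(B_r,B_{r+\varepsilon r})$ after a slight enlargement, and invoke the lower bound of Theorem~\ref{thin annuli_main}.

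\textbf{Main obstacle.} The delicate point is the parameter regime. For $sp\ge\eta$ the indicator $\1_{B_r}$ typically has infinite energy; in $\R^n$, for instance, $\1_{B_r}\in\dot W^s_{p,p}$ iff $sp<1$. Meanwhile the capacity is finite but blows up as $\varepsilon\to0$. So the corollary can only hold, with $\varepsilon$-independent constants, when $sp<\eta$. I would read ``under the assumptions of Theorem~\ref{thin annuli_main}'' as including $sp<\eta$, as in Corollary~\ref{ind_fat annuli}. If $sp\ge\eta$ is also intended, I would at most claim the statement for $\varepsilon$ bounded below, with both sides possibly infinite. Otherwise the argument is just the chain
\[
\mathrm{cap}_{s,p,q}(B_r,B_{r+\varepsilon r})\simeq \frac{\mu(B_r)}{r^{sp}}\simeq [\1_{B_r}]^p_{W^s_{p,q}(X)},
\]
with constants depending only on the parameters listed.
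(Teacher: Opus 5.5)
Your argument is correct and is essentially the paper's proof: by analogy with Corollary~\ref{ind_fat annuli}, the paper chains $\mathrm{cap}_{s,p,q}(B_r,B_{r+\varepsilon r})\simeq \mu(B_r)/r^{sp}\simeq[\1_{B_r}]^p_{W^s_{p,q}(X)}$, using Theorem~\ref{thin annuli_main} for the first comparison and, for the second, Lemma~\ref{lem0_6.10} together with Corollary~\ref{lem0_4.9.26} applied to the competitor $\1_{B_r}$ for $\mathrm{cap}_{s,p,q}(B_r,B_{2r})$. Your caveats are well founded: the statement needs $sp<\eta$, and for $sp\ge\eta$ it fails outright rather than holding for $\varepsilon$ bounded below, since the capacity is finite by Lemma~\ref{thin annuli_ub} while $[\1_{B_r}]^p_{W^s_{p,q}(X)}$ can be infinite (e.g.\ on $\R$ with $sp\ge 1$); the radius-range mismatch with Corollary~\ref{lem0_4.9.26} is also a legitimate point, and the simplest fix is that $\1_{B_r}$ is already admissible for $\mathrm{cap}_{s,p,q}(B_r,B_{r+\varepsilon r})$ with no enlargement, so $[\1_{B_r}]^p_{W^s_{p,q}(X)}\ge \mathrm{cap}_{s,p,q}(B_r,B_{r+\varepsilon r})\gtrsim \mu(B_r)/r^{sp}$ directly by Lemma~\ref{thin annuli_lb}.
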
 
 
 \begin{proof}
     The proof is similar to that of Corollary~\ref{ind_fat annuli}.
 \end{proof}

\begin{proof}[Proof of Corollary~\ref{coro_14.9}]
    The proof follows from Corollary~\ref{ind_fat annuli} and Corollary~\ref{ind_thin annuli}.
\end{proof}

\section*{Acknowledgements}

A. A. was supported by the Academy of Finland via the Centre of Excellence in Analysis and Dynamics Research (Project number 323960). The author would like to thank her advisor, Pekka Koskela, for fruitful discussions and valuable suggestions. She also thanks Khanh Nguyen for his helpful remarks on the paper.
\newpage

{\bf Conflict of interest statement}: None declared.

{\bf Data availability}: 
No data were used for the research described in the article.

{\bf AI declaration}: 
AI has not been used in this paper.

 \bibliographystyle{plain}
			\bibliography{bibliography.bib}
			
			\setlength{\parindent}{0pt}

\end{document}